\newcommand\scalemath[2]{\scalebox{#1}{\mbox{\ensuremath{\displaystyle #2}}}}
\newtheorem{theorem}{Theorem}[section]
\newenvironment{customthm}[1]
  {\innercustomthm}
  {\endinnercustomthm}
\newcommand{\Spec}{\text{Spec }}
\newtheorem{conj}[theorem]{Conjecture}
\newenvironment{customconj}[1]
  {\innercustomconj}
  {\endinnercustomconj}
\newtheorem{corollary}[theorem]{Corollary}
\newtheorem{lemma}[theorem]{Lemma}
\newtheorem{proposition}[theorem]{Proposition}
\newenvironment{customprop}[1]
  {\innercustomprop}
  {\endinnercustomprop}
\theoremstyle{remark}
\newtheorem{remark}[theorem]{Remark}
\DeclareMathOperator{\id}{id}
\DeclareMathOperator{\pr}{pr}
\DeclareMathOperator{\Br}{Br}
\DeclareMathOperator{\tr}{tr}
\DeclareMathOperator{\Gal}{Gal}
\DeclareMathOperator{\ZM}{ZM}
\DeclareMathOperator{\ZT}{ZT}
\DeclareMathOperator{\rk}{rk}
\DeclareMathOperator{\Hom}{Hom}
\DeclareMathOperator{\colim}{colim}
\DeclareMathOperator{\Ext}{Ext}
\DeclareMathOperator{\Gen}{Gen}
\DeclareMathOperator{\Et}{\acute{E}t}
\DeclareMathOperator{\Map}{Map}
\DeclareMathOperator{\ev}{ev}
\newcommand{\OO}{{\mathcal O}}
\DeclareMathOperator{\Cl}{Cl}
\DeclareMathOperator{\im}{im}
\DeclareMathOperator{\divis}{div}
\DeclareMathOperator{\Div}{Div}
\DeclareMathOperator{\Sh}{Sh}
\DeclareMathOperator{\fl}{fl}
\newcommand{\et}{\textsf{\'et}}
\DeclareMathOperator{\DIV}{\mathscr{D}\text{\kern -1pt {\emph{iv}}}}
\DeclareMathOperator{\HOM}{\mathscr{H}\text{\kern -2.5pt {\emph{om}}}}
\newcommand{\cupp}{\smallsmile}
\newcommand{\GG}{{\mathbb G}}
\newcommand{\FF}{{\mathbb F}}
\newcommand{\ZZ}{{\mathbb Z}}
\newcommand{\LL}{{\mathbb L}}
\newcommand{\QQ}{{\mathbb Q}}
\newcommand{\NN}{{\mathbb N}}
\newcommand{\cC}{{\mathcal C}}
\newcommand{\cE}{{\mathcal E}}
\newcommand{\p}{{\mathfrak{p}}}
\newcommand{\q}{{\mathfrak{q}}}
\newcommand{\mmu}{\pmb{\mu}}
\gdef\pampmatrix{%
  \begingroup
  \let&=\amsamp
  \begin{pmatrix}%
}
\gdef\endpampmatrix{\end{pmatrix}\endgroup}
\begin{document}

\title{Massey products in the \'etale cohomology of number fields}

\author{Eric Ahlqvist}
\address{Department of mathematics, Stockholms universitet\\
106 91 Stockholm, Sweden}
\email{eric.ahlqvist@math.su.se}

\author{Magnus Carlson}
\address{Institut für Mathematik, 
Johann Wolfgang Goethe-Universität, 
Robert-Mayer-Str. 6-8, 
D-60325 Frankfurt am Main, 
Germany}
\email{carlson@math.uni-frankfurt.de}

\begin{abstract}
    We give formulas for 3-fold Massey products in the \'etale cohomology of the ring of integers of a number field and use these to find the first known examples of imaginary quadratic fields with class group of $p$-rank two possessing an infinite $p$-class field tower, where $p$ is an odd prime. 
    Furthermore, we provide a necessary and sufficient condition, in terms of class groups of p-extensions, for the vanishing of 3-fold Massey products. As a consequence, we give an elementary and sufficient condition for the infinitude of class field towers of imaginary quadratic fields.     
    We also disprove McLeman's $(3,3)$-conjecture.  
    Lastly, we relate the vanishing of Massey products to the existence of Galois representations of $G_{\QQ,S}$ which realize an unexpectedly large class group for certain extensions of a quadratic imaginary number field. 
\end{abstract}

\thanks{The first author was supported by the Swedish Research Council 2015-05554 and the Knut and Alice Wallenberg Foundation 2021.0279. The second author was sponsored by the Knut and Alice Wallenberg Foundation 2017.0400}

\maketitle
\tableofcontents

\section{Introduction}
In this paper, we give formulas for $3$-fold Massey products in the \'etale cohomology of the ring of integers of a number field. We apply these formulas to disprove a conjecture of McLeman on class field towers of quadratic imaginary number fields, and we give the first known examples of quadratic imaginary number fields with class group of $p$-rank two that possess an infinite Hilbert $p$-class field tower. We also give an elementary sufficient condition for the infinitude of $p$-class field towers of imaginary quadratic fields, including the case when the class group is of $p$-rank 2. 

\subsection{Class field towers} \label{subseq:classfield}

Given a number field $K$, the \emph{class field tower} is the tower 
    \[
        K_0 = K \subset K_1 \subset K_2 \cdots\,,
    \] 
in which each $K_i$ is the Hilbert class field of $K_{i-1}$. In the early 20th century, Furtw\"{a}ngler posed the question of whether this tower could be infinite or if it must eventually become constant after a finite number of steps. The latter case occurs if and only if $K$ can be embedded into a a number field $L$ such that unique factorization holds in the ring of integers of $L$ (see, for example, \cite{Roquette}).  

Various approaches to Furtw\"{a}ngler's question were suggested: Artin proposed strengthening the Minkowski bounds. Meanwhile, several prominent mathematicians believed that a group-theoretical method could not give any insight \cite[p. 8]{LemmermeyerClass}. However, Shafarevich and his student Golod proved \cite{Golod--Shafarevich} \footnote{See \cite[Postscript]{KochGalois} for more details regarding the history of the Golod--Shafarevich inequality.}, using group theory, that the tower could be infinite. A first simplifying step is to pass to the \emph{Hilbert $p$-class field tower}, where $K_i$ (for $i \geq 1$) is the $p$-Hilbert class field of $K_{i-1}$. Each $K_i$ is Galois over $K$, and we define $G_K^{ur,p}$ as the Galois group of this tower; by construction, $G_K^{ur,p}$ is a pro-$p$-group.

For a general pro-$p$-group $G$, let $d = \dim_{\FF_p} H^1(G,\ZZ/p\ZZ)$ and $r = \dim_{\FF_p} H^2(G,\ZZ/p\ZZ)$. We note that $d$ is the minimum number of generators of $G$ (as a profinite group) and $r$ is the minimal number of relations in a presentation of $G$. If $G = G_K^{ur,p}$, then $d$ is the $p$-rank of the class group of $K$, i.e. equal to $\dim_{\FF_p} \Cl K/p$. Golod--Shafarevich proved that if $G$ is finite, then $r > (d-1)^2/4$ \cite{Golod--Shafarevich}. This inequality was subsequently improved by Vinberg--Gash\"utz to $r > d^2/4$ \cite{Vinberg}. Applying either of these bounds, it is easy to construct number fields with infinite $p$-class tower.

Naturally, the question arises: \emph{Given a number field $K$, is the $p$-class field tower of $K$ infinite or not?} A first step is to classify the situation when $K$ is quadratic imaginary and $p$ is odd. In this case $r=d$ and, by the above inequalities, the class field tower is infinite if $d \geq 4$. Subsequent work by Koch--Venkov \cite{Koch--Venkov}, proved that if $d \geq 3$, then the tower is infinite. On the other hand, the tower is finite if $d=1$. This leaves open the case when $d=2$. Let us note that this is not just of idle interest; the unramified Fontaine--Mazur conjecture \cite[Conjecture 5B]{FontaineMazur} predicts that $G_K^{ur,p}$ has no infinite $p$-adic analytic quotients. If $G_K^{ur,p}$ satisfies $r > d^2/4$, then the unramified Fontaine--Mazur is true for $G_K^{ur,p}$. Thus, only when $d=2$ can there potentially be counterexamples to the unramified Fontaine--Mazur conjecture (a related discussion can be found in \cite[p. 705]{NeukirchCohomology}).

In \cite{Mcleman}, McLeman made several conjectures regarding $p$-class field towers of imaginary quadratic fields $K$ in the case when $d = 2$ and $p$ is odd. To explain his conjectures, we start by pointing out that $G_K^{ur,p}$ then has a presentation (as a pro-$p$-group) with $2$ generators and $2$ relations. There is a refined version of the Golod--Shafarevich inequality which takes into account the \emph{depth} of the relations \cite[Theorem 7.20]{KochGalois}, with respect to the so called Zassenhaus filtration. By work of Koch--Venkov \cite{Koch--Venkov}, the two relations $r_1,r_2$ lie in depth greater or equal to $3$ and must lie in depth of odd degree. One can further show that for $G^{ur,p}$ to be finite, the depth must lie in bidegrees $(3,3)$, $(3,5)$ or $(3,7)$. McLeman proposed the following \emph{$(3,3)$-conjecture}, based on numerical data:

\begin{customconj}{\ref{conj:3,3}}[McLeman]
    Let $p > 3$ be a prime number and suppose that $K$ is an imaginary quadratic field with class group of $p$-rank two. Then the $p$-class field tower of $K$ is finite if and only if the relations lie in bidegree $(3,3)$ with respect to the Zassenhaus filtration. 
\end{customconj}

McLeman also emphasized that if one could show that the relations $r_1,r_2$ both lie in bidegree greater than or equal to $5$, then the $p$-class field tower of $K$ is infinite, and one would thus produce the first examples of infinite towers in the case when $K$ is imaginary quadratic and $H^1(G_K^{ur,p},\ZZ/p\ZZ)$ has two generators. McLeman also notes that the depths of the relations depend on \emph{Massey products} in $H^*(G_K^{ur,p},\ZZ/p\ZZ)$. 

\subsection{Massey products}
In the paper \cite{Hopkins--Wickelgren}, Hopkins and Wickelgren studied Massey products in the Galois cohomology of global fields. They showed that for a global field $K$ of characteristic not equal to two, and for classes $x,y,z \in H^1(G_K,\ZZ/2\ZZ)$ in the Galois cohomology of $K$, any Massey product $\langle x,y,z\rangle \subset H^2(G_K,\ZZ/2\ZZ)$, if defined, contains zero. Shortly thereafter, this result was generalized to arbitrary fields and arbitrary prime coefficients by Min\'{a}\v{c}--T\^{a}n \cite{Minac-Tan} and Efrat--Matzri \cite{Efrat-Matzri}. Moreover, Min\'{a}\v{c}--T\^{a}n also formulated the $n$-fold Massey vanishing conjecture which states the following: For any field $K$, any prime $p$, and any classes $x_1, \ldots , x_n \in H^1(G_K,\ZZ/p\ZZ)$, for $n\geq 3$, the Massey product $\langle x_1, \ldots, x_n \rangle$, whenever defined, contains $0$ (See \cite[Conjecture 1.6]{Minac-Tan} and \cite[Conjecture 1.1]{Minac-Tan-2}). In \cite{Minac-Tan} the conjecture was formulated in the case of the base field containing a primitive $p$th-root of unity. In \cite{Minac-Tan-2} the conjecture is formulated in full generality when the condition on the $p$th-root of unity in the base field is removed.
The case $n=4$ and $p=2$ has recently been resolved by Merkurjev--Scavia \cite{MerkurjevScavia}. For number fields, the $n$-fold Massey vanishing conjecture was proven by Harpaz--Wittenberg \cite{HarpazWittenberg}, using local-global principles for certain homogeneous spaces.

Let $K$ be a number field, $S$ a finite set of places of $K$, and $G_{K,S}$ the Galois group of the maximal extension of $K$ unramified outside of $S$. In this setiing, Massey products in $H^*(G_{K,S},\ZZ/p\ZZ)$ do not vanish in general. Morishita \cite{Morishita-Massey} calculated certain Massey products for $K = \mathbb{Q}$, in the case when $S$ a finite set of primes satisfying certain congruence conditions, in terms of ``arithmetic Milnor invariants''. He further proved that for $p=2$, the Massey products he computed could be interpreted as R\'edei symbols. In \cite{Vogel-Massey}, Vogel also expressed Massey products in terms of Milnor invariants and used these calculation to describe the relation structure of the maximal pro-$2$-quotient of $G_{\QQ,S}$, modulo the fourth level of the Zassenhaus filtration.

A further advancement was made in \cite{Sharifi-Massey}, where Sharifi gives formulas for Massey products of the form $\langle x,x, \dots , x,y \rangle$ for $x$ and $y$ elements of $H^1(G_{K,S},\ZZ/p\ZZ)$, assuming $K$ is a number field which contains all $p$th roots of unity and $S$ includes all primes above $p$. He further demonstrated that the vanishing of Massey products is tightly related to the structure of certain fundamental modules appearing in Iwasawa theory. 

More recently, in \cite{Sharifi-Bockstein}, Massey products were related to certain Bockstein maps. Using this interpretation, lower bounds were established for the $p$-rank of the class group of certain non-abelian extensions of $\QQ$.



\subsection{Overview of main results}
\subsubsection*{The class field tower problem}

Our first contribution is a disproof of the $(3,3)$-conjecture (Conjecture  \ref{conj:3,3}) of McLeman.

\begin{customthm}{\ref{thm:false}}
    The $(3,3)$-conjecture is false. For instance, for $p$ a prime and $D$ a discriminant, the pairs  
        \[
            (p,D) = (5, -90868), (7, -159592)  
        \] 
    are counterexamples to the $(3,3)$-conjecture: for each pair in the list above, the associated quadratic imaginary field with discriminant $D$ has Zassenhaus type $(5,3)$ or $(7,3)$, but the $p$-class field tower is finite.
\end{customthm}

Our second contribution is the first known examples of infinite $p$-class field towers in the case when $K$ is imaginary quadratic, $p$ is odd, and $d = \dim_{\FF_p} H^1(G_K^{ur},\ZZ/p\ZZ) = \dim_{\FF_p} \Cl K /p \Cl K$ is $2$. 

\begin{customthm}{\ref{thm:infinite}}
    There exist odd prime numbers $p$ and imaginary quadratic fields of discriminant $D$ with class group of $p$-rank two and infinite $p$-class field tower. For instance, for prime $p$ and discriminant $D$, the pairs
        \[
            \begin{split}
                (p,D) = &\  (3, -3826859), (3, -8187139), (3, -11394591), (3, -13014563)\,, \\
                        &\  (5, -2724783), (5, -4190583), (5, -6741407), (5, -6965663) \,
            \end{split}
        \]
    give examples of such fields: for each pair in the above list, the associated quadratic imaginary field with discriminant $D$ has an infinite $p$-class field tower.
\end{customthm}

We prove both Theorem \ref{thm:false} and Theorem \ref{thm:infinite} via computations of Massey products in Galois cohomology. As pointed out in Subsection \ref{subseq:classfield}, $G_K^{ur,p}$ has a presentation as a pro-$p$-group with $2$ generators and $2$ relations, and the relations lie in depth greater than or equal to $3$, with respect to the Zassenhaus filtration. The latter follows from the vanishing of the cup product $H^1(G_K^{ur},\ZZ/p\ZZ) \otimes H^1(G_K^{ur},\ZZ/p\ZZ) \to H^2(G_K^{ur},\ZZ/p\ZZ)$ \cite{Koch--Venkov}. McLeman noted in \cite{Mcleman} that the relations both being in degree $3$ is equivalent to the fact that if $x,y \in H^1(G_K^{ur},\ZZ/p\ZZ)$ are the two generators, then the Massey products 
\[
    \langle x,x,y \rangle, \langle y,y,x \rangle 
\] 
are linearly independent. Further, McLeman proved that if both $\langle x,x,y \rangle$ and $\langle y,y,x \rangle$ vanish (for $p=3$ one also needs $\langle x,x,x \rangle$ and $\langle y,y,y \rangle$ to be zero), then the $p$-class field tower is infinite. Thus, to prove Theorem \ref{thm:infinite}, it is enough to construct totally imaginary number field $K$ with class group of $p$-rank two, for $p$ odd, and such that certain threefold Massey products vanish. For the disproof of McLeman's $(3,3)$-conjecture, it suffices to construct such $K$ where the class field tower is finite, with $\langle x,x,y\rangle, \langle y,y,x \rangle$ non-vanishing but linearly dependent. 

Using our implementation of the Massey product formula (Theorem \ref{thm:main-simple}) in a C program, leveraging the PARI library \cite{PARI2}, we computed Massey products for thousands of imaginary quadratic fields. This enabled us to identify examples demonstrating Theorem \ref{thm:false} and Theorem \ref{thm:infinite}. The code and the data can be found in the repository \url{https://github.com/ericahlqvist/massey}. Theorem \ref{thm:false} also relies on earlier computations by Mayer \cite{Mayer-Finite}, which shows that the fields in Theorem \ref{thm:false} has finite class field tower.  

\subsubsection*{Main results on Massey products}
Our computation of Massey products proceeds in two steps: the first is group-cohomological, where we define a new secondary cohomology operation in Galois cohomology which in many situations coincides with Massey products. The second step is arithmetic, where we relate Massey products in Galois cohomology to Massey products in \'etale cohomology. Then we use Artin--Verdier duality and explicit resolutions to give formulas for Massey products in terms of arithmetic invariants. Recall that we may identify the \'etale fundamental group $\pi_1(X)$ (based at a geometric point $\bar{x}$, whose image is the generic point of $X=\Spec\OO_K$) with the Galois group of the maximal unramified extension $K^{ur}/K$, more or less by definition. 

For the first step, let $G$ be a profinite group. Suppose that $x,y,z \in H^1(G,\ZZ/p\ZZ)$ are such that $x \cupp y = y \cupp z= 0$. There is then a $G$-module $V_y$ and an exact sequence 
    \[
        0 \rightarrow \ZZ/p\ZZ \rightarrow V_y \rightarrow \ZZ/p\ZZ\to 0
    \]
such that the connecting homomorphism 
    \[ 
        y \cupp (-) :  H^i(G,\ZZ/p\ZZ) \to H^{i+1}(G,\ZZ/p\ZZ)
    \]
is given by a cup product with $y$. Since $y \cupp x = y \cupp z =0$, one can find elements $w_x, w_z \in H^1(G,V_y)$ mapping to $x$  and $z$ respectively. We can then form the sum 
    \[ 
        x \cupp w_z - w_x \cupp z \,,
    \]  
and this is sent to zero under the map $H^2(G,V_y) \to H^2(G,\ZZ/p\ZZ)$, which implies that one can find an element $\alpha$ mapping to it under the canonical map $H^2(G,\ZZ/p\ZZ) \to H^2(G,V_y)$. We define $\langle x,y,z \rangle'$ to be the subset of $H^2(G,\ZZ/p\ZZ)$ obtained by varying $w_x,w_z$ and the element $\alpha$. 

\begin{customprop}{\ref{prop:coins}}
    Assume that $y\cupp H^1(G,\ZZ/p\ZZ)\subseteq x\cupp H^1(G,\ZZ/p\ZZ)+H^1(G,\ZZ/p\ZZ)\cupp z$. Then the subset $\langle x,y,z \rangle'$ of $H^2(G,\ZZ/p\ZZ)$ coincides with $\langle x,y,z \rangle$.
\end{customprop}

This formula simplifies considerably when $x=y$ and $p$ is odd:

\begin{customprop}{\ref{prop:half-cup}}
    Let $p$ be an odd prime. Suppose that $x\cupp x = x\cupp y = 0$ and choose $w_x$ and $w_y$ as above. Then we have an equality 
        \[
            x\cupp w_y -w_x\cupp y = \frac{1}{2}x\cupp w_y    
        \]
    in $H^2(G, V_x)$. Hence $\langle x,x,y \rangle$ consists of elements of the form $M+x \cupp \alpha + \beta \cupp y$, for $\alpha, \beta \in H^1(G,\ZZ/p\ZZ)$, where $M$ maps to $\frac{1}{2} x \cupp w_y$ under $H^2(G, \ZZ/p\ZZ)\to H^2(G, V_x)$.
\end{customprop}

We have now completed the first step, and defined a secondary cohomology operation which, in many situations coincides with Massey products. The second step towards our computation is arithmetic. First, we fix notation:
\begin{itemize}
    \item $K$ is a number field (assumed to be totally imaginary to simplify the exposition), 
    \item $\Cl(K)$ is the class group of $K$,
    \item $\Div(K)$ is the group of fractional ideals of $K$,
    \item $X=\Spec \OO_K$, where $\OO_K$ is the ring of integers in $K$, 
    \item $L_x$ is the unramified degree $p$ extension of $K$ corresponding to an element $x\in H^1(X, \ZZ/p\ZZ)$, 
    \item $N_x$ denotes both the norm $\Cl(L_x)\to \Cl(K)$ and the norm $L_x^\times\to K^\times$,      
    \item $\mmu_p$ denotes the sheaf of $p$th roots of unity on the big fppf site on $X$,  
\end{itemize} 
To give formulas for Massey products, we first apply Proposition \ref{prop:coins} and \ref{prop:half-cup} to $\pi_1(X)^{(p)} = \Gal(K^{ur,p}/K),$ the maximal pro-$p$ quotient of the \'etale fundamental group of $X$. 

Second, the Massey product $\langle x,y,z \rangle \subset H^2(\pi_1(X)^{(p)},\ZZ/p\ZZ)$ is identified, under the inclusion $H^2(\pi_1(X)^{(p)},\ZZ/p\ZZ) \rightarrow H^2(X,\ZZ/p\ZZ)$, with the Massey product $\langle x,y,z \rangle$, where now $x,y,z$ are viewed as elements of $H^1(X,\ZZ/p\ZZ)$ under the identification $H^1(\pi_1(X)^{(p)}, \ZZ/p\ZZ) = H^1(X,\ZZ/p\ZZ)$. By using certain resolutions of the \'etale sheaf $P_y$, which is the pullback of the $G$-module $V_y$ to the \'etale site of $X$, we can then give a formula for the Massey product 
    \[ 
        \langle x,y,z \rangle \subset H^2(X,\ZZ/p\ZZ)
    \] 
in favorable situations, using Artin--Verdier duality.

To state our formulas, we recall that by the duality theorem of Artin--Verdier \cite[Corollary 3.2]{MilneADT}, there is, for every $i$, a perfect pairing 
    \[
        \langle -, -\rangle \colon  H^i(X, \ZZ/p\ZZ)\times H^{3-i}(X, \mmu_p) \to H^3(X, \mmu_p)\cong \frac{1}{p}\ZZ/\ZZ\subset \QQ/\ZZ\,,
    \]
where $\mmu_p$-cohomology is always taken in the big fppf site (see Remark \ref{rk:et-vs-fppf}). When $i=1$, this pairing can be described explicitly via the Artin-symbol. 

Via Artin--Verdier duality, any representative of the subset $\langle x,y,z \rangle \subset H^2(X,\ZZ/p\ZZ)$ can be identified with a functional on $H^1(X,\mmu_p)$. The group $H^1(X, \mmu_p)$ can be decribed as $Z_1/B_1$, where $Z_1 = \{ (a, J)\in K^\times\oplus\Div(K) | \divis(a)+pI=0\}$ and $B_1 = \{(b^{-p}, \divis(b)) \in K^* \oplus \Div(K)|b \in K^*\}$. We may think of the Massey product as an element 
    $
        \langle x,y,z\rangle \in H^2(X,\ZZ/p\ZZ)/(H^1(X,\ZZ/p\ZZ)\cupp z+x\cupp H^1(X,\ZZ/p\ZZ))    
    $
which, under Artin--Verdier, corresponds to a map 
    $
        \ker c_x^\sim\cap \ker c_z^\sim \to \QQ/\ZZ \,,   
    $
where $c_x^\sim$ denotes the Pontryagin dual of the map $x\cupp (-)\colon H^1(X,\ZZ/p\ZZ)\to H^2(X,\ZZ/p\ZZ)$. We can now state our main formulas:


\begin{customthm}{\ref{cor:massey}}
Let $x,y,z\in H^1(X, \ZZ/p\ZZ)$ be non-zero elements such that $x\cupp y = y\cupp z = 0$, and assume further that $y\cupp H^1(X,\ZZ/p\ZZ)\subseteq x\cupp H^1(X,\ZZ/p\ZZ)+H^1(X,\ZZ/p\ZZ)\cupp z$. Then, for any $(a',J)\in \ker c_x^\sim\cap \ker c_z^\sim$,
    \begin{equation}
        \langle \langle x,y,z\rangle , (a',J)\rangle = \langle z, J_K^x\rangle+\langle x, J_K^z\rangle\,,
    \end{equation}
where $J_K^x$ and $J_K^z$ are fractional ideals of $K$, satisfying the list of relations given in Theorem \ref{thm:formula2.0}. 
\end{customthm}

This says that $\langle \langle x,y,z\rangle , (a',J)\rangle$ can be described via Artin symbols evaluated on elements $J_K^x$ and $J_K^z$, satisfying certain compatibility relations with respects to inclusions, norms, and other classical arithmetic operators taking place in the diagram 
    \[
        \begin{tikzcd}
            & L_{xy} & & L_{yz} & \\
            L_x\ar{ur} & & L_y\ar{ul}\ar{ur} & &  L_z\ar{ul}\\
            & & K\ar{ull}\ar{u}\ar{urr} & & 
        \end{tikzcd}\,.
    \]
When $K$ is an imaginary quadratic field, the subgroup $\ker c_x^\sim\cap \ker c_z^\sim\subseteq H^1(X, \mmu_p)$ will in fact be all of $H^1(X, \mmu_p)$. 
For Massey products of the form $\langle x,x,y\rangle$, with $p$ odd, the formula
simplifies:

\begin{customthm}{\ref{thm:main-simple}} 
    Let $L_x\supset K$ be an unramified extension of degree $p$ representing an element $x\in H^1(X,\ZZ/p\ZZ)$, where $p$ is an odd prime. Let $y\in H^1(X,\ZZ/p\ZZ)$ be an element such that $x\cupp y=0$. Then, for any $(a',J)\in \ker c_x^\sim\cap \ker c_y^\sim\subseteq H^1(X, \mmu_p)$, the equality
        \[
            \langle \langle x,x,y\rangle, (a',J)\rangle =
            \begin{cases} 
                \langle y, N_x(I')+J\rangle & \mbox{ if }p = 3\,, \\
                \langle y, N_x(I')\rangle & \mbox{ if }p > 3\,,
            \end{cases} 
        \]
    holds, where $(t, I')\in L_x^\times\oplus \Div(L_x)$ is any element satisfying the following two equalities:
    \begin{itemize}
        \item $(1-\sigma_x)^2I'+\divis(t)+i_x(J)=0$ and 
        \item $N_x(t)=a'$. 
    \end{itemize}
\end{customthm}
 


When $K$ is quadratic imaginary, we relate the vanishing of Massey products to the rank of class groups of extensions of $K$.

\begin{customthm}{\ref{thm:crit-for-massey-vanish}}
    Suppose that $K$ is imaginary quadratic, $p$ is an odd prime, and that $K$ has class group of $p$-rank 2. Choose elements $x$ and $y$ forming a basis for $H^1(X, \ZZ/p\ZZ)$ with corresponding field extensions $L_x$ and $L_y$. Then the Massey products $\langle x,x,x\rangle$ and $\langle x,x,y\rangle$ both vanish if and only if $\Cl(L_x)$ has $p$-rank at least 4. In particular, if both $L_x$ and $L_y$ have class groups of $p$-rank at least 4, then the $p$-class field tower of $K$ is infinite.  
\end{customthm}

Theorem \ref{thm:crit-for-massey-vanish} is proven via method inspired by \cite{Sharifi-Bockstein}, where Massey products are described as certain connecting homomorphisms. Let $G_K^{ur}$ be the maximal unramified pro-$p$-extension of $K$ and let $G$ be the Galois group of the extension $L_x$ corresponding to $x\in H^1(X,\ZZ/p\ZZ)=H^1(G_K^{ur},\ZZ/p\ZZ)$. Let $I\subseteq \ZZ/p\ZZ[G]$ be the augmentation ideal. Then we have for every $n\geq 1$, an exact sequence 
    \[
        0\to I^n/I^{n+1} \to \ZZ/p\ZZ[G]/I^{n+1}\to \ZZ/p\ZZ[G]/I^{n}\to 0  \,,  
    \]  
and the image of the corresponding connecting homomorphism is generated by elements in the $n$-fold Massey products $\langle x,x,\dots, x,x\rangle$ and $\langle x,x,\dots, x,y\rangle$. By using induction and controlling the images of these connecting homomorphisms, we show that the vanishing (non-vanishing) of 3-fold Massey products gives lower (upper) bounds for $\rk H^1(G_K^{ur}, \ZZ/p\ZZ[G])=\rk \Cl(L_x)\otimes \FF_p$. 

We also realize the lower bound by constructing a Galois representation $G_{\QQ,S}\to GL_5(\FF_p)$  
whose restriction to $G_{L_x, S}$ takes the form 
\begin{equation}
    \begin{pmatrix}
        1   & 0 & 0 & 0  & t_{y,3} \\
            & 1          & 0 & 0 & t_{y,2}\\
            &               & 1         & 0           & t_{y,1} \\
            &               &           & 1                      & y \\
            &               &           &                           & 1 
    \end{pmatrix}    \,.
\end{equation}  
The homomorphisms $y, t_{y,1}, t_{y,2}, t_{y,3}\colon G_{L_x, S}\to \ZZ/p\ZZ$ lift to linearly independent elements in \newline $H^1(G_{L_x}^{ur}, \ZZ/p\ZZ)\cong \Hom(\Cl(L_x), \ZZ/p\ZZ)$ forcing the $p$-rank of $\Cl(L_x)$ to be at least 4.

\subsection*{Roadmap}
In Section \ref{sec:galois}, we review Massey products for profinite groups and define a new secondary cohomology operation, which we relate to 3-fold Massey products. This secondary cohomology operation will be our main tool for computing Massey products. 

Section \ref{sec:etale}, relates Massey products in the \'etale cohomology of a connected scheme $X$ to those in the group cohomology of its \'etale fundamental group $\pi_1(X)$; all the results here are known. The results of this section show that the secondary cohomology operations of Section \ref{sec:galois} can be computed in \'etale cohomology. 

Section \ref{sec:massey}, the core of the paper, is split into three subsections. 
Readers only intereted in class field towers, and not general formulas for Massey products, can safely skip Subsection \ref{subseq:comp}. 
\begin{itemize}
    \item Subsection \ref{subsec:arithmetic} provides background on arithmetic duality theory. In particular, it describes the cohomology groups $H^i(X, \ZZ/p\ZZ)$ and $H^{j}(X, \mmu_p)$, and recalls the functoriality of the arithmetic duality, which is crucial for our computations. 
    \item Then follows the technical Subsection \ref{subseq:comp}, where we derive a formula for Massey products in terms of arithmetic invariants of the number field (Theorem \ref{thm:formula2.0} and Theorem \ref{thm:main-simple}). Here we make use of the description in Section \ref{sec:galois}. In particular, Theorem \ref{thm:formula2.0} depends heavily on Proposition \ref{prop:preim} and the simpler Theorem \ref{thm:main-simple} is directly related to Proposition \ref{prop:half-cup}. The computation relies on the arithmetic duality of Subsection \ref{subsec:arithmetic} and makes use of explicit resolutions found in Appendix \ref{section:resolutions}. 
    \item Subsection \ref{subsec:necessary} establishes a sufficient condition for the vanishing of Massey product for imaginary quadratic number fields of $p$-rank two, in terms of class groups of certain unramified extensions (Theorem \ref{thm:crit-for-massey-vanish}). 
\end{itemize} 

Finally, in Section \ref{sec:classfield} we apply the material of Section \ref{sec:massey} to construct examples of imaginary quadratic number fields with $p$-rank two with infinite $p$-class field tower (Theorem \ref{thm:infinite}), as well as counterexamples to McLeman's $(3,3)$-conjecture (Theorem \ref{thm:false}). More precisely, we implement the formula given in Theorem \ref{thm:main-simple}, using PARI, to compute Massey products for thousands of imaginary quadratic fields. After examining the computations, we found the examples proving Theorem \ref{thm:infinite} and Theorem \ref{thm:false}.  

\subsection*{Acknowledgements} The authors wish to express their gratitude to the anonymous referee for carefully reading of the article, and for the many helpful suggestions which greatly improved the article. We are further grateful to Daniel Mayer for sharing a list of imaginary quadratic fields that he suspected could have infinite $p$-class field tower. Two of these were indeed shown by the authors to have infinite $p$-class field tower. The authors would also wish to thank Bill Allombert for answering several technical questions regarding PARI. Finally, the authors are also grateful to the Knut and Alice Wallenberg foundation for their support.

\section{Massey products in Galois cohomology}\label{sec:galois}
Let $G$ be a profinite group and let $C^*(G,\ZZ/p\ZZ)$ be the continuous cochain complex on $G$ with values in $\ZZ/p\ZZ$.  We further assume that $x,y,z$ are elements in the continuous cohomology group $H^1(G, \ZZ/p\ZZ)$ satisfying $x\cupp y = y\cupp z = 0$. 
The goal of this section is to give formulas for the 3-fold Massey product $\langle x,y,z\rangle\subseteq H^2(G, \ZZ/p\ZZ)$ under the hypothesis that either $x=y$ or $y\cupp (-) = 0$. More precisely, the formula will be a sum of certain cup products. 

We will define Massey products as in \cite{DwyerMassey}. Since $x\cupp y=y\cupp z=0$, there exists $k_{xy}, k_{yz}\in C^1(G, \ZZ/p\ZZ)$ such that $dk_{xy}=-x\cupp y$ and $dk_{yz}=-y\cupp z$. Now let $U_4(\ZZ/p\ZZ)$ be the group of unipotent, upper-triangular $4 \times 4$-matrices over  $\ZZ/p\ZZ$. Then the data of two such trivializing cochains $k_{xy}$ and $k_{yz}$ is equivalent to the data of a homomorphism 
    \[
        \rho\colon G\to \overline{U_4}(\ZZ/p\ZZ)\,, g\mapsto \begin{pmatrix}
            1 & x(g) & k_{xy}(g) & * \\
            0 & 1 & y(g) & k_{yz}(g) \\
            0 & 0 & 1 & z(g) \\
            0 & 0 & 0 & 1
        \end{pmatrix}    
    \]
where $\overline{U_4}(\ZZ/p\ZZ)$ is the group $U_4(\ZZ/p\ZZ)$ modulo the center, which consists of unipotent matrices of the form 
    \[
        \begin{pmatrix}
            1 & 0 & 0 & a \\
            0 & 1 & 0 & 0 \\
            0 & 0 & 1 & 0 \\
            0 & 0 & 0 & 1
        \end{pmatrix}           
    \]
where $a\in \ZZ/p\ZZ$. 
Given such a homomorphism $\rho$ we define $\langle x,y,z \rangle_\rho\in H^2(G,\ZZ/p\ZZ)$ to be the cohomology class associated to the cocycle defined by the equality 
\[
    \langle x,y,z \rangle_\rho (g,h) = x(g)k_{yz}(h)+k_{xy}(g)z(h)\,.    
\]
We now define the Massey product $\langle x,y,z\rangle$ to be the set of all $\langle x,y,z \rangle_\rho$, where $\rho$ ranges over the set of homomorphisms $G\to \overline{U_4}(\ZZ/p\ZZ)$ as above. Given $\rho$ and $\rho'$, we have 
    \[
        \begin{split}
            (\langle x,y,z \rangle_\rho-\langle x,y,z \rangle_{\rho'})(g,h) & = x(g)(k_{yz}(h)-k'_{yz}(h))+(k_{xy}(g)-k'_{xy}(g))z(h) \\
            & = (x\cupp (k_{yz}-k'_{yz})+(k_{xy}-k'_{xy})\cupp z)(g,h)\,.
        \end{split}
    \]
Since $k_{yz}-k'_{yz}$ and $k_{xy}-k'_{xy}$ are cocycles, we see that the Massey product $\langle x,y,z\rangle$ is uniquely determined up to elements in the set $x\cupp H^1(G,\ZZ/p\ZZ)+H^1(G,\ZZ/p\ZZ)\cupp z$. 

In order to prove the next lemma, we recall that the cup product 
    \[
        y\cupp(-)\colon H^n(G,\ZZ/p\ZZ) \to H^{n+1}(G,\ZZ/p\ZZ)    
    \]
is the connecting homomorphism of the short exact sequence 
    \[
        0\to \ZZ/p\ZZ\to V_y \to \ZZ/p\ZZ \to 0    
    \]
where $V_y = (\ZZ/p\ZZ)^2$, with $G$-action given by
    \[
        g\mapsto \begin{pmatrix}
            1 & y(g) \\
            0 & 1
        \end{pmatrix}\,. 
    \]
For a proof of this fact, see \cite[Example 12.37]{GuillotLocal}.
\begin{remark}
If $p=2$ in the exact sequence above, and $y$ is non-zero, then $V_y$ is isomorphic to the induced module $\mathrm{Ind}^G_{\ker y} (\mathbb{F}_2)$, which by Shapiro's lemma implies that $H^n(G,V_y)$ is canonically isomorphic to $H^n(\ker y, \mathbb{F}_2)$ for all $n$.
\end{remark}
\begin{lemma}\label{lem:null-datum}
    The datum of a trivializing cochain $k_{yz}$ of $-y\cupp z$ is equivalent to a lift $w_z\in Z^1(G, V_y)$ of $z\in H^1(G,\ZZ/p\ZZ)$. 
\end{lemma}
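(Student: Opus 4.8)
The plan is to unwind both sides of the claimed equivalence in terms of explicit cochain-level data and exhibit a dictionary between them. Recall that $V_y = \ZZ/p\ZZ^2$ as a $G$-module, with $g$ acting by the upper-triangular matrix with off-diagonal entry $y(g)$, and that the short exact sequence $0 \to \ZZ/p\ZZ \to V_y \to \ZZ/p\ZZ \to 0$ realizes $y \cupp (-)$ as its connecting homomorphism. First I would write an arbitrary $1$-cochain $w_z \in C^1(G, V_y)$ in coordinates as $w_z(g) = (k(g), z(g))$ for functions $k, z \colon G \to \ZZ/p\ZZ$; the condition that $w_z$ projects to the class $z \in H^1(G,\ZZ/p\ZZ)$ fixes the second coordinate to be (a cocycle representing) $z$, so the only freedom is in the first coordinate $k$.

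The key step is to compute the coboundary $dw_z$ using the twisted $G$-action on $V_y$ and to read off what the cocycle condition $dw_z = 0$ says coordinatewise. Since $G$ acts trivially on the second coordinate and $z$ is already a cocycle, the second-coordinate equation is automatic. The first-coordinate equation, after expanding $(dw_z)(g,h) = g\cdot w_z(h) - w_z(gh) + w_z(g)$ and using that the matrix action sends the second coordinate $z(h)$ into the first via multiplication by $y(g)$, becomes precisely $dk(g,h) = -y(g)z(h) = -(y\cupp z)(g,h)$. Thus $w_z$ being a cocycle lifting $z$ is equivalent to $k$ being a trivializing cochain $k_{yz}$ of $-y\cupp z$, which is exactly the asserted correspondence.

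Finally I would note that this correspondence $k_{yz} \leftrightarrow w_z = (k_{yz}, z)$ is a genuine bijection between the two sets of data: given a trivializing cochain one builds the lift, and given a cocycle lift one extracts its first coordinate, and these constructions are mutually inverse by construction. I do not expect any serious obstacle here; the only point requiring care is the sign and the correct bookkeeping of the twisted coboundary formula, ensuring that the $y(g)z(h)$ term produced by the nontrivial matrix action matches the defining equation $dk_{yz} = -y \cupp z$ used earlier in the section. This is a routine but sign-sensitive verification rather than a conceptual difficulty.
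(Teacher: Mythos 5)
Your proposal is correct and follows essentially the same route as the paper: write $w_z$ in coordinates as $(k, z)$, expand the twisted coboundary, and observe that the cocycle condition in the first coordinate is exactly $dk = -y\cupp z$. The paper compresses this to ``a calculation shows,'' and your explicit computation (including the sign from the off-diagonal action) is that calculation carried out correctly.
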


\begin{proof}
    If $y\cupp z=0$ in cohomology we understand, from the long exact sequence associated to $0\to \ZZ/p\ZZ\to V_y \to \ZZ/p\ZZ \to 0$, that there exists a lift $w_z\in H^1(G, V_y)$ of $z$. Writing   
        \[
            w_z=\begin{pmatrix}
                t_z \\
                z
            \end{pmatrix}\,,    
        \]
    a calculation shows that $w_z\colon G\to V_y$ is a cocycle if and only if $t_z$ is a trivializing cochain of $-y\cupp z$. 
\end{proof}

We will now use Lemma \ref{lem:null-datum} to give a formula for the Massey product in terms of lifts $w_x, w_z$ as above. Note that we have a cup product 
    \[
        H^1(G,\ZZ/p\ZZ)\times H^1(G, V_y) \to H^2(G, V_y)    
    \]
and hence a well-defined element $x\cupp w_z-w_x\cupp z\in H^2(G,V_y)$ mapping to zero in $H^2(G,\ZZ/p\ZZ)$ under the map coming from the long exact sequence above. Indeed, that this element goes to zero follows from the naturality of the cup product. Thus there is an element $H^2(G, \ZZ/p\ZZ)$ mapping to $x\cupp w_z-w_x\cupp z$ and such an element is closely related to the Massey product $\langle x,y,z\rangle$ as the following proposition shows. 

\begin{proposition}\label{prop:preim}
    The image of the Massey product $\langle x,y,z\rangle_\rho$ under the map $H^2(G,\ZZ/p\ZZ)\to H^2(G, V_y)$ is given by 
        \[
            x\cupp w_z- w_x\cupp z\,,    
        \]
    where 
        \[
            w_x = \begin{pmatrix}
                t_x \\ x
            \end{pmatrix}\,, \quad 
            w_z = \begin{pmatrix}
                t_z \\ z
            \end{pmatrix}   
        \]
    and 
        \[
            \rho(g) = \begin{pmatrix}
                1 & x(g) & -t_x(g)+x(g)y(g) & * \\
                0 & 1 & y(g) & t_{z}(g) \\
                0 & 0 & 1 & z(g) \\
                0 & 0 & 0 & 1
            \end{pmatrix} \,.    
        \]
\end{proposition}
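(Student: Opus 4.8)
The plan is to establish the identity at the level of inhomogeneous cochains and then pass to cohomology. The first step is to extract the two trivializing cochains encoded by the displayed matrix. Comparing $\rho$ with the general shape of a homomorphism $G\to\overline{U_4}(\ZZ/p\ZZ)$ used to define $\langle x,y,z\rangle_\rho$, I read off $k_{xy}(g)=-t_x(g)+x(g)y(g)$ and $k_{yz}(g)=t_z(g)$. Before using them I would check that $\rho$ is genuinely a homomorphism, i.e. that these are valid trivializing cochains. That $t_z$ trivializes $-y\cupp z$ is exactly Lemma~\ref{lem:null-datum} applied to $w_z$. For $k_{xy}$, I would use that $w_x\in Z^1(G,V_y)$ forces $dt_x=-(y\cupp x)$ (the same computation as in Lemma~\ref{lem:null-datum}, now for the lift of $x$), combine it with the elementary identity $d(xy)=-(x\cupp y)-(y\cupp x)$ for the pointwise product $xy$ of the two homomorphisms $x,y$, and conclude $d(-t_x+xy)=-(x\cupp y)$.

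The second step is to compute the two module-valued cup products on cochains, taking care of the twist by the $G$-action on $V_y$. Using the convention $(a\cupp b)(g,h)=a(g)\,(g\cdot b(h))$ fixed earlier in this section, together with the action $g\cdot\left(\begin{smallmatrix} u \\ v\end{smallmatrix}\right)=\left(\begin{smallmatrix} u+y(g)v \\ v\end{smallmatrix}\right)$, I obtain
\[
(x\cupp w_z)(g,h)=\begin{pmatrix} x(g)t_z(h)+x(g)y(g)z(h) \\ x(g)z(h) \end{pmatrix},
\]
whereas, since $z$ has trivial coefficients, no twist appears in
\[
(w_x\cupp z)(g,h)=\begin{pmatrix} t_x(g)z(h) \\ x(g)z(h) \end{pmatrix}.
\]
Subtracting, the lower $\ZZ/p\ZZ$-components cancel and
\[
(x\cupp w_z-w_x\cupp z)(g,h)=\begin{pmatrix} x(g)t_z(h)+x(g)y(g)z(h)-t_x(g)z(h) \\ 0 \end{pmatrix}.
\]

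The third step is to push $\langle x,y,z\rangle_\rho$ forward along the inclusion $\ZZ/p\ZZ\hookrightarrow V_y$, $a\mapsto\left(\begin{smallmatrix} a \\ 0\end{smallmatrix}\right)$, which induces the map $H^2(G,\ZZ/p\ZZ)\to H^2(G,V_y)$ in the statement. By definition this sends the cocycle $\langle x,y,z\rangle_\rho(g,h)=x(g)k_{yz}(h)+k_{xy}(g)z(h)$ to $\left(\begin{smallmatrix} x(g)t_z(h)+(-t_x(g)+x(g)y(g))z(h) \\ 0\end{smallmatrix}\right)$, which is literally the cochain computed in the previous step. Hence the two classes agree in $H^2(G,V_y)$, as claimed; the vanishing of the lower component also reconfirms that $x\cupp w_z-w_x\cupp z$ lies in the image of the inclusion, consistent with the discussion preceding the proposition.

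I expect the only genuinely delicate point to be the bookkeeping in the second step, specifically making sure the $G$-action twist is applied to $x\cupp w_z$ but not to $w_x\cupp z$ (because $z$ has trivial coefficients), since it is precisely the resulting term $x(g)y(g)z(h)$ that matches the $x(g)y(g)$ entry of $k_{xy}$ in $\rho$. Keeping the cup-product sign and ordering conventions identical to those fixed earlier in this section is what makes the three cochains coincide on the nose rather than merely up to a coboundary.
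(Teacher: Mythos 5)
Your proof is correct and follows essentially the same route as the paper: read off $k_{xy}=-t_x+xy$ and $k_{yz}=t_z$ from $\rho$, compute both module-valued cup products on cochains with the twist applied only to $x\cupp w_z$, and observe that the resulting cochain is literally the image of the Massey cocycle under $\ZZ/p\ZZ\hookrightarrow V_y$. Your additional verification that $k_{xy}$ and $k_{yz}$ are genuine trivializing cochains (via $dt_x=-(y\cupp x)$ and $d(xy)=-(x\cupp y)-(y\cupp x)$) is a worthwhile check that the paper leaves implicit.
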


\begin{proof}
Let $k_{yz}=t_z$ and $k_{xy}=-t_x+xy$. Then we get 
    \[
        \langle x,y,z \rangle (g,h) = x(g)t_z(h)-t_x(g)z(h)+x(g)y(g)z(h)   
    \]
and since 
    \[
        \begin{split}
            (x\cupp w_z-w_x\cupp z)(g,h) & = x(g)\begin{pmatrix}
                1 & y(g) \\
                0 & 1
            \end{pmatrix}   
            \begin{pmatrix}
                t_z(h) \\
                z(h)
            \end{pmatrix} - 
            \begin{pmatrix}
                t_x(g) \\
                x(g)
            \end{pmatrix}z(h) \\ & = \begin{pmatrix}
                x(g)t_z(h)-t_x(g)z(h)+x(g)y(g)z(h) \\
                0
            \end{pmatrix}\,,
        \end{split}
    \]
we see that $\langle x,y,z \rangle_\rho\in H^2(G,\ZZ/p\ZZ)$ maps to $x\cupp w_z-w_x\cupp z\in H^2(G, V_y)$.
\end{proof}

Let us denote by $\langle x,y,z\rangle'$ the subset of $H^2(G, \ZZ/p\ZZ)$ which is the preimage of the set of elements in $H^2(G, V_y)$ of the form $x\cupp w_z-w_x\cupp z$, where $w_x, w_z$ ranges over lifts of $x$ and $z$. Let us note that $\langle x,y,z\rangle$ is always a subset of $\langle x,y,z\rangle'$ by Proposition \ref{prop:preim}.

\begin{proposition}\label{prop:coins}
    Assume that $y\cupp H^1(G,\ZZ/p\ZZ)\subseteq x\cupp H^1(G,\ZZ/p\ZZ)+H^1(G,\ZZ/p\ZZ)\cupp z$. Then the subset $\langle x,y,z \rangle'$ of $H^2(G,\ZZ/p\ZZ)$ coincides with $\langle x,y,z \rangle$.
\end{proposition}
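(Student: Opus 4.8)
The plan is to establish the reverse inclusion $\langle x,y,z\rangle'\subseteq\langle x,y,z\rangle$, since the forward inclusion $\langle x,y,z\rangle\subseteq\langle x,y,z\rangle'$ is already recorded after Proposition~\ref{prop:preim}. My strategy is to describe both sets explicitly as cosets inside $H^2(G,\ZZ/p\ZZ)$ and compare them. Write $\iota_*\colon H^2(G,\ZZ/p\ZZ)\to H^2(G,V_y)$ for the map induced by the inclusion $\ZZ/p\ZZ\hookrightarrow V_y$, and $\pi_*$ for the map induced by $V_y\to\ZZ/p\ZZ$. From the long exact sequence of $0\to\ZZ/p\ZZ\to V_y\to\ZZ/p\ZZ\to 0$, together with the identification of the connecting map with $y\cupp(-)$, I read off that $\ker\iota_* = y\cupp H^1(G,\ZZ/p\ZZ)$ in degree two, and that the lifts of $x$ (resp. of $z$) along $\pi_*$ form a single coset of $\im\big(\iota_*\colon H^1(G,\ZZ/p\ZZ)\to H^1(G,V_y)\big)$.

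Next I would pin down the two cosets. The discussion preceding the proposition shows that $\langle x,y,z\rangle$ is exactly the coset $M_0 + x\cupp H^1(G,\ZZ/p\ZZ) + H^1(G,\ZZ/p\ZZ)\cupp z$ for any fixed representative $M_0 = \langle x,y,z\rangle_{\rho_0}$. For the target set $S := \{\,x\cupp w_z - w_x\cupp z : w_x, w_z \text{ lifts}\,\}$, I fix lifts $w_x^0, w_z^0$ and write a general lift as $w_x = w_x^0 + \iota_*(c)$, $w_z = w_z^0 + \iota_*(c')$ with $c, c'\in H^1(G,\ZZ/p\ZZ)$. The projection formula $x\cupp\iota_*(c') = \iota_*(x\cupp c')$ and $\iota_*(c)\cupp z = \iota_*(c\cupp z)$ (valid since $\iota$ is $G$-linear) then gives $S = s_0 + \iota_*\big(x\cupp H^1(G,\ZZ/p\ZZ) + H^1(G,\ZZ/p\ZZ)\cupp z\big)$, where $s_0 = x\cupp w_z^0 - w_x^0\cupp z$. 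Crucially, Proposition~\ref{prop:preim} lets me choose $\rho_0$ so that $\iota_*(M_0) = s_0$, giving the two cosets a common basepoint.

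Taking the preimage under $\iota_*$ is then formal. Setting $I := x\cupp H^1(G,\ZZ/p\ZZ) + H^1(G,\ZZ/p\ZZ)\cupp z$, we have $S = \iota_*(M_0) + \iota_*(I)$, so an element $m$ lies in $\langle x,y,z\rangle' = \iota_*^{-1}(S)$ if and only if $\iota_*(m - M_0)\in\iota_*(I)$, that is, if and only if $m\in M_0 + I + \ker\iota_*$. Hence $\langle x,y,z\rangle' = \langle x,y,z\rangle + y\cupp H^1(G,\ZZ/p\ZZ)$. To finish, I observe that the hypotheses are precisely what makes the extra term $y\cupp H^1(G,\ZZ/p\ZZ)$ disappear into the indeterminacy $I$: if $y\cupp(-) = 0$ then $y\cupp H^1(G,\ZZ/p\ZZ) = 0$, while if $x = y$ then $y\cupp H^1(G,\ZZ/p\ZZ) = x\cupp H^1(G,\ZZ/p\ZZ)\subseteq I$. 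In either case $\langle x,y,z\rangle' = \langle x,y,z\rangle$.

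The main obstacle I anticipate is the bookkeeping in the second step, namely correctly showing that $S$ is a single coset with the claimed basepoint. This requires the projection formula for the cup product against the $G$-linear inclusion $\ZZ/p\ZZ\hookrightarrow V_y$, and a careful matching of the lift data $(w_x, w_z)$ produced by a representing homomorphism $\rho$ via Proposition~\ref{prop:preim} with the base lifts used to parametrize $S$. Once both sets are exhibited as cosets sharing a basepoint, the comparison and the case analysis are immediate, and the two hypotheses enter only at the very last absorption step.
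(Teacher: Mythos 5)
Your proof is correct and follows essentially the same route as the paper: both arguments reduce to the observation that $\langle x,y,z\rangle$ and $\langle x,y,z\rangle'$ are cosets with a common basepoint (via Proposition \ref{prop:preim}) whose indeterminacies differ exactly by the extra term $y\cupp H^1(G,\ZZ/p\ZZ)=\ker\iota_*$, which is absorbed under either hypothesis. The paper states this comparison of indeterminacies without proof, whereas you supply the verification (parametrizing lifts by $\im\iota_*$ and using the projection formula), but the underlying argument is identical.
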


\begin{proof}
Recall that the indeterminacy of $\langle x,y,z\rangle$ is given by $x\cupp H^1(G, \ZZ/p\ZZ)+H^1(G, \ZZ/p\ZZ)\cupp z$. On the other hand, the indeterminacy of $\langle x,y,z\rangle'$ is given by $x\cupp H^1(G,\ZZ/p\ZZ)+H^1(G,\ZZ/p\ZZ)\cupp z+y\cupp H^1(G,\ZZ/p\ZZ)$. The proposition follows immediately from this observation. 
\end{proof}

The formula in Proposition \ref{prop:coins} simplifies considerably in the case when $x=y$.

\begin{proposition}\label{prop:half-cup}
    Let $p$ be an odd prime. Suppose that $x\cupp x = x\cupp y = 0$ and choose $w_x$ and $w_y$ as above. Then we have an equality 
        \[
            x\cupp w_y -w_x\cupp y = \frac{1}{2}x\cupp w_y    
        \]
    in $H^2(G, V_x)$. Hence $\langle x,x,y \rangle$ consists of elements of the form $M+x \cupp \alpha + \beta \cupp y$, for $\alpha, \beta \in H^1(G,\ZZ/p\ZZ)$, where $M$ maps to $\frac{1}{2} x \cupp w_y$ under $H^2(G, \ZZ/p\ZZ)\to H^2(G, V_x)$.
\end{proposition}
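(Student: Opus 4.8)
The plan is to prove the displayed identity at the level of inhomogeneous cochains and then read off the statement about $\langle x,x,y\rangle$. By Lemma \ref{lem:null-datum} I may write $w_x = \begin{pmatrix} t_x \\ x \end{pmatrix}$ and $w_y = \begin{pmatrix} t_y \\ y \end{pmatrix}$ with $d t_x = -x\cupp x$ and $d t_y = -x\cupp y$, where $V_x$ carries the $G$-action $g\mapsto \begin{pmatrix} 1 & x(g) \\ 0 & 1 \end{pmatrix}$. The key observation, and the only place oddness of $p$ enters, is that because $x$ is a homomorphism the cochain $t_x(g) := \tfrac12 x(g)^2$ satisfies $d t_x = -x\cupp x$; this is a direct check using $x(gh) = x(g)+x(h)$. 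I would fix this canonical lift $w_x = \begin{pmatrix} \tfrac12 x^2 \\ x \end{pmatrix}$, which is where the factor $\tfrac12$ ultimately comes from.

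Next I would compute the two cup products explicitly. Using the standard formula $(u\cupp v)(g,h) = u(g)\cdot\bigl(g\cdot v(h)\bigr)$ together with the matrix action on $V_x$, one gets
\[
  (x\cupp w_y)(g,h) = \begin{pmatrix} x(g)t_y(h) + x(g)^2 y(h) \\ x(g)y(h) \end{pmatrix}, \qquad
  (w_x\cupp y)(g,h) = \begin{pmatrix} t_x(g)y(h) \\ x(g)y(h) \end{pmatrix}.
\]
Substituting $t_x = \tfrac12 x^2$ and forming the difference of interest yields
\[
  \bigl(\tfrac12\, x\cupp w_y - w_x\cupp y\bigr)(g,h) = \begin{pmatrix} \tfrac12 x(g)t_y(h) \\ -\tfrac12 x(g)y(h) \end{pmatrix}.
\]
I would then exhibit this as a coboundary: taking $\phi := \begin{pmatrix} 0 \\ \tfrac12 t_y \end{pmatrix} \in C^1(G,V_x)$ and using the identity $(d\phi)(g,h) = \begin{pmatrix} (d\phi_1)(g,h) + x(g)\phi_2(h) \\ (d\phi_2)(g,h)\end{pmatrix}$ together with $d(\tfrac12 t_y) = -\tfrac12\, x\cupp y$, a short computation gives $d\phi = \tfrac12\, x\cupp w_y - w_x\cupp y$. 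Hence this difference vanishes in $H^2(G,V_x)$, which is exactly the asserted equality $x\cupp w_y - w_x\cupp y = \tfrac12\, x\cupp w_y$.

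Finally, the description of $\langle x,x,y\rangle$ follows by combining this with Proposition \ref{prop:coins}: since the first and middle entries of $\langle x,x,y\rangle$ coincide, that proposition applies and identifies $\langle x,x,y\rangle$ with $\langle x,x,y\rangle'$, whose indeterminacy is $x\cupp H^1(G,\ZZ/p\ZZ) + H^1(G,\ZZ/p\ZZ)\cupp y$. Taking $M$ to be the element of $\langle x,x,y\rangle$ attached to the chosen lifts via Proposition \ref{prop:preim} --- so that $M$ maps to $x\cupp w_y - w_x\cupp y = \tfrac12\, x\cupp w_y$ under $H^2(G,\ZZ/p\ZZ)\to H^2(G,V_x)$ --- gives the stated form $M + x\cupp\alpha + \beta\cupp y$. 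The main subtlety to flag is that the clean identity is genuinely choice-dependent: for an arbitrary lift $w_x$ one instead picks up an extra term $c\cupp y$ with $c = [\tfrac12 x^2 - t_x]\in H^1(G,\ZZ/p\ZZ)$, which need not vanish in $H^2$; it is precisely the canonical primitive $\tfrac12 x^2$ of $x\cupp x$, available only because $p$ is odd, that removes this term and produces the factor $\tfrac12$.
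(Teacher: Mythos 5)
Your proof is correct and follows essentially the same route as the paper: fix an explicit primitive $t_x$ of $-x\cupp x$ (the paper takes $t_x=x(x-1)/2$ where you take $x^2/2$; the two differ by the cocycle $-x/2$, whose cup with $y$ vanishes in cohomology since $x\cupp y=0$), compute the difference of cochains explicitly, and exhibit it as an explicit coboundary in $C^1(G,V_x)$. Your closing remark about the choice-dependence being absorbed by the $\beta\cupp y$ indeterminacy is accurate and consistent with the paper's treatment.
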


\begin{proof}
    Choose $w_x$ by setting $t_x=x(x-1)/2$; the element $t_x$ satisfies that $dt_x=-x\cupp x$. We have 
        \[
            \begin{split}
                (x\cupp w_y -w_x\cupp y)(g, h) & = 
                \begin{pmatrix}
                    x(g)t_y(h)+x(g)^2y(h)-t_x(g)y(h) \\
                    0
                \end{pmatrix} \\
                & =
                \frac{1}{2}\begin{pmatrix}
                    2x(g)t_y(h)+ x(g)^2y(h)+x(g)y(h)\\
                    0
                \end{pmatrix}
            \end{split}
        \]
    and 
        \[
            \frac{1}{2}x\cupp w_y(g, h) = \frac{1}{2}\begin{pmatrix}
                x(g)t_y(h)+x(g)^2y(h) \\
                x(g)y(h)
            \end{pmatrix}\,.
        \]
    Hence 
        \[
            (x\cupp w_y -w_x\cupp y-\frac{1}{2}x\cupp w_y)(g,h) = 
            \frac{1}{2}\begin{pmatrix}
                x(g)t_y(h)+x(g)y(h) \\
                -x(g)y(h)
            \end{pmatrix}\,.
        \]
    This element is the coboundary of
        \[
            \frac{1}{2}\begin{pmatrix}
                -t_y \\
                t_y
            \end{pmatrix}  \in C^1(G, V_x), 
        \]
    hence $(x\cupp w_y -w_x\cupp y-\frac{1}{2}x\cupp w_y)(g,h) = 0$ in $H^2(G,V_x)$.  
\end{proof}

In Section \ref{sec:massey} we will use Proposition \ref{prop:coins} to give a formula for the Massey product in the \'etale cohomology of number fields using number theoretical data. 
\section{Massey products in \'etale cohomology}\label{sec:etale}

Fix a connected scheme $X$. In this section, we show that we may apply the results of Section \ref{sec:galois} to give a method for computation of certain 3-fold Massey products in the \'etale cohomology of $X$ with coefficients in $\ZZ/p\ZZ$. To achieve this, we will start by comparing Massey products in the DG-algebra of cochains on a connected space $S$ with Massey products in $\pi_1(S)$ (for some choice of basepoint), as defined in Section \ref{sec:galois}. 

\subsection*{Comparison of group cohomology and \'etale cohomology}
We let $S$ be either a connected space, i.e., a simplicial set that is connected and Kan, or a connected pro-space, that is, $S = \{S_i\}_{i \in I}$ for some cofiltered category I, where each $S_i$ is a simplicial set that is connected and Kan. Given such an $S$, one can define the cochain complex $C^*(S, \ZZ/p\ZZ)$ (if $S$ is a pro-space, $C^*(S, \ZZ/p\ZZ) := \colim_i C^*(S_i, \ZZ/p\ZZ))$. As is well-known, $C^*(S, \ZZ/p\ZZ)$ has the structure of a DG-algebra and we denote its multiplication by $- \cupp -$. 

Given $x,y,z \in H^1(S, \ZZ/p\ZZ)$ such that $x \cupp y = y \cupp z =0,$ one can then define the Massey product  $\langle x,y,z \rangle$ as follows: start by choosing lifts of $x,y,z$ to $C^1(S,\ZZ/p\ZZ)$; we will by abuse of notation call these lifts $x,y$ and $z$. Now  choose $k_{xy}, k_{yz} \in C^1(S, \ZZ/p\ZZ)$ such that  $dk_{xy} = - x \cupp y, dk_{yz} = - y \cupp z$. Note that since the differential is a derivation, the element $$k_{xy} \cupp z + x \cupp k_{yz} \in C^2(S,\ZZ/p\ZZ)$$ is actually a cocycle, and thus defines a class in cohomology. One then defines the Massey product $\langle x,y,z\rangle$ as the subset $\{ k_{xy} \cupp z + x \cupp k_{yz} \}$ of $H^2(S,\ZZ/p\ZZ)$, where $k_{xy}$ and $k_{yz}$ ranges over all possible choices of trivializing 1-cochains $k_{xy}$ and $k_{yz}$ as above.
Just as when we defined Massey products for groups in Section \ref{sec:galois}, the Massey product $\langle x,y,z \rangle$ is uniquely determined by one choice of $k_{xy},k_{yz}$ as above, up to an \emph{indeterminacy}; the indeterminacy of the Massey product $\langle x,y,z \rangle$ consists of the subset of $H^2(S,\ZZ/p\ZZ)$ given by 
    \[
        H^1(S,\ZZ/p\ZZ) \cupp z + x \cupp H^1(S,\ZZ/p\ZZ)\,.
    \]
Since $H^1(S, \ZZ/p\ZZ) = H^1(\pi_1(S), \ZZ/p\ZZ)$, it is natural to ask how Massey products in $S$ and $B\pi_1(S)$, compare. Here $B\pi_1(S)$ is the (profinite) classifying space of $\pi_1(S)$. Towards this comparison, we start by noting that Massey products are weakly functorial: if we have a map $f\colon S\to T$ of spaces, and $x,y,z \in H^1(T, \ZZ/p\ZZ)$ are such that $x \cupp y = y \cupp z = 0$, then 
$$f^*(\langle x,y,z\rangle ) \subseteq \langle f^*(x),f^*(y),f^*(z) \rangle.$$ Applying this weak functoriality to the natural map $\tau\colon S \rightarrow B \pi_1(S)$, we find that  $$\tau^*(\langle x,y,z\rangle) \subseteq \langle \tau^*x,\tau^*y,\tau^*z\rangle.$$  Since the indeterminacy of $\tau^*(\langle x,y,z\rangle)$ coincides with the indeterminacy of $\langle \tau^*(x),\tau^*(y),\tau^*(z) \rangle$, one sees that $\tau^*(\langle x,y,z\rangle) =  \langle \tau^*(x),\tau^*(y),\tau^*(z)\rangle$. This shows that if $S$ is any space or pro-space and $x,y,z \in H^1(S, \ZZ/p\ZZ)$ are such that $x \cupp y = y \cupp z = 0$, then the Massey product $\langle x,y,z\rangle$ can be computed by first computing the Massey product in $B\pi_1(S)$ and then pulling back the resulting subset to $H^2(S,\ZZ/p\ZZ)$.

Let us finally note that Massey products for the profinite classifying space $B \pi_1(S)$ identifies with Massey products in the Galois cohomology of the profinite group $\pi_1(S)$ as defined in Section \ref{sec:galois}. Indeed: $B \pi_1(S)$ can be identified with the profinite simplicial set which is the (profinite) nerve of the category with one object and with the set of elements of $\pi_1(S)$ as morphisms. Then the continuous simplicial cochain complex $C^*(B \pi_1(S),\ZZ/p\ZZ)$ has in degree $n$ continuous functions $\pi_1(S)^{\times n} \rightarrow \ZZ/p\ZZ$ as elements. The resulting complex coincides with the homogeneous cochain complex computing the continuous group cohomology of $\pi_1(S)$, thus Massey products coincide. 

To a connected scheme $X$ we may associate the \emph{\'etale topological type} $\Et(X)$ \cite{FriedlanderEtale}; the \'etale topological type of a scheme $X$ is a pro-space, and by the above discussion we have a map $\Et(X)\to B\pi_1(X)$, inducing maps 
    \[
        \begin{split}
            H^1(\pi_1(X), \ZZ/p\ZZ) & \xrightarrow{\sim} H^1(X, \ZZ/p\ZZ)\,, \\
            H^2(\pi_1(X), \ZZ/p\ZZ) & \hookrightarrow H^2(X, \ZZ/p\ZZ)\,,
        \end{split}    
    \]
that identifies Massey products.  

Recall from Section \ref{sec:galois} that the cup product 
\[
    y\cupp(-)\colon H^n(\pi_1(X),\ZZ/p\ZZ) \to H^{n+1}(\pi_1(X),\ZZ/p\ZZ)    
\]
is the connecting homomorphism of the short exact sequence 
\[
    0\to \ZZ/p\ZZ\to V_y \to \ZZ/p\ZZ \to 0    
\]
where $V_y = \ZZ/p\ZZ^2$ with the $\pi_1(X)$-action given by
\[
    g\mapsto \begin{pmatrix}
        1 & y(g) \\
        0 & 1
    \end{pmatrix}  \,.  
\]
By the equivalence between $\pi_1(X)$-modules and locally constant \'etale sheaves on $X$, if we pull back the $\pi_1(X)$-module $V_y$ to $X$, we get a locally constant \'etale sheaf $P_y$ sitting in an exact sequence 
    \[
        0\to \ZZ/p\ZZ\to P_y\to \ZZ/p\ZZ \to 0
    \] 
and the corresponding connecting homomorphism 
    \[
        H^i(X, \ZZ/p\ZZ) \to H^{i+1}(X, \ZZ/p\ZZ)    
    \]
is a cup product with $y$ \cite[Lemma 3.1]{Ahlqvist--Carlson-cup}. Just as in the case of group cohomology, given classes $x,y,z\in H^1(X, \ZZ/p\ZZ)$ such that $x\cupp y=y\cupp z = 0$, we can form the set $\langle x,y,z\rangle'$ that is the preimage under $H^1(X, \ZZ/p\ZZ)\to H^1(X, P_y)$ of the set of elements $$x\cupp w_z-w_x\cupp z\in H^2(X, P_y)$$ where $w_x, w_z\in H^1(X, P_y)$ ranges over all elements mapping to $x$ and $z$ respectively. 

\begin{proposition}\label{prop:coins-et}
    Let $X$ be a connected scheme and let $x,y,z\in H^1(X,\ZZ/p\ZZ)$ be elements such that $x\cupp y=y\cupp z=0$. Assume that $x=y$ or that $y\cupp (-)\colon H^1(X, \ZZ/p\ZZ)\to H^2(X, \ZZ/p\ZZ)$ is zero. Then the Massey product $\langle x,y,z\rangle$ coincides with $\langle x,y,z\rangle'$.
\end{proposition}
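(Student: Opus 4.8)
The plan is to mimic the proof of Proposition \ref{prop:coins}: I would show that both $\langle x,y,z\rangle$ and $\langle x,y,z\rangle'$ are cosets of explicit subgroups of $H^2(X,\ZZ/p\ZZ)$, compute those subgroups, and exhibit a common element; under the stated hypotheses the two subgroups coincide, and two cosets of the same subgroup that meet are equal. Write $i\colon \ZZ/p\ZZ\to P_y$ and $\pi\colon P_y\to \ZZ/p\ZZ$ for the maps in the defining short exact sequence. From its long exact cohomology sequence I record that $\ker(\pi_*\colon H^1(X,P_y)\to H^1(X,\ZZ/p\ZZ)) = \im(i_*)$, and that $\ker(i_*\colon H^2(X,\ZZ/p\ZZ)\to H^2(X,P_y)) = \im(y\cupp(-)) = y\cupp H^1(X,\ZZ/p\ZZ)$, the latter because the connecting homomorphism of the sequence is cup product with $y$.

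First I would compute the indeterminacy of $\langle x,y,z\rangle'$. Fix lifts $w_x^0, w_z^0\in H^1(X,P_y)$ of $x$ and $z$; every pair of lifts has the form $w_x = w_x^0 + i_*(a)$, $w_z = w_z^0 + i_*(b)$ with $a,b\in H^1(X,\ZZ/p\ZZ)$. Using naturality of the cup product along $i$ (the projection-type identities $i_*(a)\cupp z = i_*(a\cupp z)$ and $x\cupp i_*(b) = i_*(x\cupp b)$) one finds
\[
    x\cupp w_z - w_x\cupp z = (x\cupp w_z^0 - w_x^0\cupp z) + i_*(x\cupp b - a\cupp z).
\]
Since $\pi_*(x\cupp w_z^0 - w_x^0\cupp z) = x\cupp z - x\cupp z = 0$, the base term lies in $\im(i_*)$, say equals $i_*(c_0)$, so the set of all $x\cupp w_z - w_x\cupp z$ is the coset $i_*\bigl(c_0 + x\cupp H^1(X,\ZZ/p\ZZ) + H^1(X,\ZZ/p\ZZ)\cupp z\bigr)$. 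Taking the preimage under $i_*$ and using $\ker i_* = y\cupp H^1(X,\ZZ/p\ZZ)$ shows that $\langle x,y,z\rangle'$ is a (nonempty) coset of
\[
    B' := x\cupp H^1(X,\ZZ/p\ZZ) + H^1(X,\ZZ/p\ZZ)\cupp z + y\cupp H^1(X,\ZZ/p\ZZ).
\]

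Next I would produce a common element of $\langle x,y,z\rangle$ and $\langle x,y,z\rangle'$. Here I invoke the comparison map $\Et(X)\to B\pi_1(X)$: it identifies $\langle x,y,z\rangle$ in $H^2(X,\ZZ/p\ZZ)$ with the image of the group-cohomology Massey product, and it is compatible with cup products, with the sheaf/module identification $P_y\leftrightarrow V_y$, and with the maps $i_*$. By Proposition \ref{prop:preim}, each $\langle x,y,z\rangle_\rho$ maps under $i_*$ to $x\cupp w_z^G - w_x^G\cupp z$ for group-level lifts $w_x^G, w_z^G$; pulling back, the corresponding class in $H^2(X,\ZZ/p\ZZ)$ maps under $i_*$ to $x\cupp w_z - w_x\cupp z$ for the étale lifts obtained by pullback. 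Hence this class lies in $\langle x,y,z\rangle'$, giving the inclusion $\langle x,y,z\rangle\subseteq\langle x,y,z\rangle'$ and, in particular, a common element.

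Finally, recalling from the earlier discussion that $\langle x,y,z\rangle$ is a coset of $B := x\cupp H^1(X,\ZZ/p\ZZ) + H^1(X,\ZZ/p\ZZ)\cupp z$, I would conclude as in Proposition \ref{prop:coins}: if $x=y$ then $y\cupp H^1 = x\cupp H^1\subseteq B$, while if $y\cupp(-)=0$ then $y\cupp H^1 = 0$, so in either case $B = B'$. Two cosets of the same subgroup with nonempty intersection coincide, whence $\langle x,y,z\rangle = \langle x,y,z\rangle'$. The main obstacle is the indeterminacy computation of the second paragraph: correctly identifying $\ker(i_*)$ on $H^2$ as $y\cupp H^1$ and propagating it through the preimage is exactly where the extra summand $y\cupp H^1$ appears and where the hypotheses become essential. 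The compatibilities of the comparison map used in the third paragraph are available from the results cited earlier, but must be applied with care, since the étale group $H^1(X,P_y)$ may be strictly larger than $H^1(\pi_1(X),V_y)$ and so not every étale lift descends to a group-cohomology one — which is why only a single common element, rather than a bijection of lifts, is extracted there.
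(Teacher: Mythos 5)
Your proof is correct and follows essentially the same route as the paper, which simply cites Proposition \ref{prop:coins} together with the comparison map $\Et(X)\to B\pi_1(X)$; the coset-of-indeterminacy argument you spell out (identifying $\ker i_*$ on $H^2$ with $y\cupp H^1$, producing a common element via Proposition \ref{prop:preim}, and noting $B=B'$ under the hypotheses) is exactly the content of that proposition and its surrounding discussion. The only difference is that you carry out the indeterminacy computation directly in étale cohomology rather than transporting the group-cohomology statement wholesale, which is a harmless and slightly more self-contained variant.
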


\begin{proof}
    This follows from Proposition \ref{prop:coins} and the fact that the map on cohomology is induced by $\Et(X)\to B\pi_1(X)$ as in the above discussion. 
\end{proof}
\section{Massey products for number fields}\label{sec:massey}

In this section we let  $X = \Spec \OO_K$, the spectrum of the ring of integers of a number field $K$. For simplicity of exposition, we assume that $K$ is totally imaginary or that the prime $p$ is odd. The case where $K$ has real embeddings and $p$ is even and/or $X$ is replaced by an open subscheme $U\subseteq X$ may be treated using the theory of \cite{Ahlqvist--Carlson-punctured}. 

We will give an explicit number theoretical formula for the Massey product $\langle x,y,z\rangle$ together with a much simpler formula when $x=y$. To achieve this we will extensively use the theory of arithmetic duality \cite{MazurNotes, MilneADT,Demarche--Harari}. For completeness, we briefly recall the parts of the theory that we will use. 

\subsection{Background on arithmetic duality} \label{subsec:arithmetic}
Given a locally constant constructible sheaf $A$ of abelian groups on the small \'etale site of $X$, we denote by $D(A)=R\HOM(A, \GG_m)$, which is an object in the derived category $D(X_{\et})$. In particular, by definition, we have $H^i(X, D(A))\cong\Ext^i(A, \GG_m)$ for all $i\geq 0$. By Artin--Verdier duality (\cite[(2.4)]{MazurNotes},\cite[Corollary III.3.2]{MilneADT}, \cite[Theorem 1.1]{Demarche--Harari}), the map   
    \[
        R\Gamma(X, A)\otimes^{\LL} R\Gamma(X, D(A))\to R\Gamma(X, \GG_m)\to \QQ/\ZZ[-3]   
    \]
induces an isomorphism 
    \[
        R\Gamma(X, A) \cong (R\Gamma(X, D(A)))[3]^\sim    
    \]
in the derived category of abelian groups \cite[Corollary III.3.2]{MilneADT}, \cite[Theorem 1.1]{Demarche--Harari}, where $^\sim$ denotes the Pontryagin dual. As a consequence, we have a pairing of abelian groups 
    \[
        H^{3-i}(X, A)\times H^i(X, D(A))\to H^3(X, \GG_m)\cong \QQ/\ZZ    
    \]
which is perfect and makes $H^{3-i}(X, A)$ Pontryagin dual to $H^i(X, D(A))$. We will use the fact that this duality is also functorial and we refer to \cite[Section 3]{Ahlqvist--Carlson-cup} for precise statements about this functoriality. 

Let $\Div(K)$ be the group of fractional ideals of $K$ and let $\divis \colon K^\times \to \Div(K)$ be the map which takes $a\in K^\times$ to the principal ideal generated by $a$. By \cite[Section 2]{Ahlqvist--Carlson-cup} we have that
    \[
        H^i(X, D(\ZZ/p\ZZ)) =
        \begin{cases}
            \mu_p(K) & i=0\,, \\
            Z^1/B^1 & i=1\,, \\
            \Cl(K)/p\Cl(K) & i=2\,, \\
            \ZZ/p\ZZ & i=3\,, \\
            0 & i \geq 4\,,
        \end{cases}   \quad
        H^i(X, \ZZ/p\ZZ) =
        \begin{cases}
            \ZZ/p\ZZ & i=0\,, \\
            (\Cl(K)/p\Cl(K))^\sim & i=1\,, \\
            (Z^1/B^1)^\sim & i=2\,, \\
            (\mu_p(K))^\sim & i=3\,, \\
            0 & i \geq 4\,,
        \end{cases}    
    \]
where 
    \[
        \begin{split}
            Z^1 & = \ker\left(\begin{pmatrix}
                \divis & p
            \end{pmatrix}\colon K^\times \oplus \Div(K)\to \Div(K)\right)\,, \mbox{ and} \\
            B^1 & = \im\left(\begin{pmatrix}
                -p \\
                \divis
            \end{pmatrix}\colon K^\times\to K^\times \oplus \Div(K)\right)\,.
        \end{split}  
    \]
This means that $Z^1 = \{(a, I)\in K^\times\oplus \Div(K):\divis(a)+pI=0\}$,  $B^1= \{(b^{-p}, \divis(b))\}$, and $Z^1/B^1$ sits in an exact sequence 
    \[
            0\to \OO_K^\times/p\OO_K^\times \xrightarrow{b\mapsto (b, 0)} Z^1/B^1 \xrightarrow{(a, I)\mapsto I} \Cl(K)[p]\to 0\,. 
    \]
For example, when $K=\QQ(\sqrt{-5})$ and $p=2$, we have that $Z^1/B^1$ is generated by the classes $(-1, 0)$ and $(2, (2, 1+\sqrt{-5}))$, and since it has no element of order 4, we get that $Z^1/B^1\cong \ZZ/2\ZZ\oplus \ZZ/2\ZZ$. 

\begin{remark}\label{rk:et-vs-fppf}
    Let $f\colon X_{\fl} \to X_{\et}$ be the natural map of sites, where $X_{\fl}$ is the big fppf site of $X$, and denote by $f^*$ and $f_*$ the maps on topoi induced by $f$. Note that $R\Gamma(X_{\fl}, \HOM(f^*A, \GG_m)) \cong R\Gamma(X, D(A))$, naturally in $A$, where $\HOM(f^*A, \GG_m)$ is the Cartier dual. Indeed, we have the chain of canonical isomorphisms
        \[
        \begin{split}
            R\Gamma(X_{\fl}, \HOM(f^*A, \GG_m))  
            & \cong R \Hom(f^*A,\mathbb{G}_m) \\ 
            & \cong R \Hom(A,Rf_*\mathbb{G}_m) \\
            & \cong R \Hom(A,f_*\mathbb{G}_m) \\ 
            & \cong R\Gamma(X_{\et}, D(A))\,.
        \end{split}
        \]
    The first isomorphism follows from the fact that $\HOM(A,\GG_m) = R \HOM(A,\GG_m)$ by \cite[Theorem 11.6]{BrochardDuality}, and that $R \Gamma \circ R \HOM = R \Hom$. The second isomorphism follows from adjunction, and the third isomorphism follows from the fact that $Rf_* \mathbb{G}_m = f_* \mathbb{G}_m$ since $\mathbb{G}_m$ is a smooth group scheme (see \cite[Chapter III, Theorem 3.9]{MilneEtale} and its proof). The fourth equality is per definition.
\end{remark}

With the necessary prerequisites on \'etale cohomology now stated, we move on to compute formulas for Massey products, but first, we will need some preliminary number-theoretical results on bicyclic field extensions. 

\subsection{The intersection of the kernel of norms}
Let $K$ be a number field, and let $L_{x}, L_y$ be two distinct unramified extensions of degree $p$, where $p$ is a prime number, and let $L_{xy}$ be their compositum. We thus have that $L_{xy}$ is an unramified bicyclic field extension of $K$ of degree $p^2$. We have a diagram 
    \[
        \begin{tikzcd} 
            L_x \arrow[r,above,"i_y"] \arrow[d,shift left = .75ex , "N_x"]  & \arrow[l, shift left = .75ex,below,"N_y"] L_{xy} \arrow[d,shift left = .75ex , "N_x"]   \\ 
            K \arrow[r,above,"i_y"]  \arrow[u,"i_x"]  &  \arrow[l, shift left = .75ex,below,"N_y"]   \arrow[u,"i_x"]  L_y  
        \end{tikzcd}
    \] 
where all the maps are the obvious inclusions and norms. We write 
    \[
        \Gal(L_{xy}/K) \cong \Gal(L_x/K) \times \Gal(L_y/K) = \langle \sigma_x \rangle \times \langle \sigma_y \rangle\,,
    \]
where $\sigma_x$ is a generator of $\Gal(L_x/K)$ and $\sigma_y$ is a generator of $\Gal(L_y/K)$. We now wish to characterize the intersection $\ker N_x \cap \ker N_y$. In more detail: we wish to show that any element $f \in \ker N_x \cap \ker N_y \subset L_{xy}^\times$ can be written as $(1-\sigma_x)(1-\sigma_y) \gamma$ for some  $\gamma \in L_{xy}^\times$; in other words, we claim that $\ker N_x \cap \ker N_y = \im (1-\sigma_x)(1-\sigma_y)$, where $(1-\sigma_x)(1-\sigma_y)$ is viewed as an endomorphism of $L_{xy}^\times$. 

\begin{lemma} \label{lemma:condition1} 
Let $L_{xy}/K$ be the compositum of two distinct unramified field extensions $L_x/K, L_y/K$, where both are of degree $p$, for $p$ a prime number.  Suppose that $f \in L_{xy}^\times$ is such that $f = (1-\sigma_x)\alpha$ for some $\alpha \in L_{xy}^\times$. Then $f$ can be written as $ (1-\sigma_x)(1-\sigma_y) \gamma$ for some $\gamma \in L_{xy}^\times$ if and only if $N_y(\alpha)$ lies in the image of the composite 
    \[
        L_y^\times \xrightarrow{i_x} L_{xy}^\times \xrightarrow{N_y} L_x^\times\,.
    \] 
\end{lemma}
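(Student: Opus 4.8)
The plan is to regard $L_{xy}^\times$ as a module over the group ring $\ZZ[G]$, where $G=\Gal(L_{xy}/K)=\langle \sigma_x\rangle\times\langle\sigma_y\rangle$, writing the module structure additively so that $1-\sigma_x$, $1-\sigma_y$ and the norms $N_x=\sum_{i=0}^{p-1}\sigma_x^i$ and $N_y=\sum_{j=0}^{p-1}\sigma_y^j$ all become $\ZZ[G]$-endomorphisms. I would first record the three ingredients that drive the argument: (i) Hilbert's Theorem~90 for the cyclic extension $L_{xy}/L_y$ with group $\langle\sigma_x\rangle$, which gives $\ker N_x=\im(1-\sigma_x)$, and likewise for $L_{xy}/L_x$ with group $\langle\sigma_y\rangle$, which gives $\ker N_y=\im(1-\sigma_y)$; (ii) the identity $N_y(1-\sigma_y)=1-\sigma_y^{\,p}=0$, so that $N_y$ annihilates $\im(1-\sigma_y)$; and (iii) the identification $\ker(1-\sigma_x)=(L_{xy}^\times)^{\sigma_x}=i_x(L_y^\times)$. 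Note that the hypothesis $f=(1-\sigma_x)\alpha$ already places $f$ in $\ker N_x$ by (i).

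For the forward direction, suppose $f=(1-\sigma_x)(1-\sigma_y)\gamma$. Comparing with $f=(1-\sigma_x)\alpha$ shows that $\alpha-(1-\sigma_y)\gamma$ lies in $\ker(1-\sigma_x)$, hence equals $i_x(\beta_0)$ for some $\beta_0\in L_y^\times$ by (iii). Applying $N_y$ and using (ii) to kill the term $N_y((1-\sigma_y)\gamma)$ then yields $N_y(\alpha)=N_y(i_x(\beta_0))$, which is exactly the desired membership of $N_y(\alpha)$ in the image of $N_y\circ i_x$.

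For the converse, suppose $N_y(\alpha)=N_y(i_x(\beta_0))$ with $\beta_0\in L_y^\times$, and set $\alpha'=\alpha-i_x(\beta_0)$. Since $i_x(\beta_0)$ is $\sigma_x$-fixed we get $(1-\sigma_x)\alpha'=(1-\sigma_x)\alpha=f$, while by construction $N_y(\alpha')=0$. Hilbert~90 for $\langle\sigma_y\rangle$ (ingredient (i)) then lets me write $\alpha'=(1-\sigma_y)\gamma$ for some $\gamma\in L_{xy}^\times$, and substituting back gives $f=(1-\sigma_x)(1-\sigma_y)\gamma$, as required.

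The argument is essentially formal once the $\ZZ[G]$-module picture is in place, so I do not expect a serious obstacle; the one thing to be careful about is the bookkeeping of which cyclic subextension each norm and each instance of Hilbert~90 belongs to, since $N_x$ and $N_y$ refer to the two different factors of $G$. It is also worth observing---as a sanity check that the criterion is intrinsic to $f$ and not to the auxiliary choice of $\alpha$---that any two preimages of $f$ under $1-\sigma_x$ differ by an element of $\ker(1-\sigma_x)=i_x(L_y^\times)$, so their images under $N_y$ differ by an element of $N_y(i_x(L_y^\times))$; this is precisely the subgroup modulo which the condition is phrased.
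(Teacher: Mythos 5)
Your proof is correct and follows essentially the same route as the paper's: in both directions you subtract a $\sigma_x$-fixed element $i_x(\beta_0)$ (the paper's $i_x(u)$) from $\alpha$, use that $N_y$ kills $\im(1-\sigma_y)$, and invoke Hilbert's Theorem 90 for the cyclic extension with group $\langle\sigma_y\rangle$ to produce $\gamma$. The only difference is cosmetic: you make explicit the module-theoretic bookkeeping ($\ker(1-\sigma_x)=i_x(L_y^\times)$, well-definedness of the criterion) that the paper leaves implicit.
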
 

\begin{proof}
Suppose that $f= (1-\sigma_x)\alpha = (1-\sigma_x)(1-\sigma_y) \gamma$, so that $\alpha = (1-\sigma_y)\gamma + i_x(u)$ for some $u \in L_y^\times$. Applying $N_y$ we see that $N_y(\alpha) = N_y(i_x(u))$, so the condition is necessary. To see that it is sufficient, 
let $u \in L_y^\times$ be an element satisfying that $N_y(i_x(u)) = N_y(\alpha)$. Since $(1-\sigma_x) i_x(u) = 0$, we have that 
    \[
        f = (1-\sigma_x)(\alpha-i_x(u))\,.
    \]
Now, $N_y(\alpha-i_x(u)) = 0$, so that by Hilbert's theorem 90,
$\alpha-i_x(u) = (1-\sigma_y) \gamma$, implying our result.
\end{proof}
 
\begin{lemma}\label{lemma:int-norm-ker}
Let $L_{xy}/K$ be the compositum of two distinct unramified field extensions $L_x/K, L_y/K$, where both are of degree $p$, for $p$ a prime number. Then we have the equality 
    \[
        \ker N_x \cap \ker N_y = \im (1-\sigma_x)(1-\sigma_y)\,.
    \]
\end{lemma}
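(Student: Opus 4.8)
The plan is to prove the two inclusions separately, the containment $\im(1-\sigma_x)(1-\sigma_y)\subseteq \ker N_x\cap\ker N_y$ being the routine one. Since $\Gal(L_{xy}/K)$ is abelian, $N_x$ and $N_y$ commute with $1-\sigma_x$ and $1-\sigma_y$; writing $N_x=\sum_{k=0}^{p-1}\sigma_x^k$ one computes $N_x(1-\sigma_x)=1-\sigma_x^p=0$ and likewise $N_y(1-\sigma_y)=0$, so both $N_x$ and $N_y$ annihilate $\im(1-\sigma_x)(1-\sigma_y)$. This gives the easy inclusion at once.

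For the reverse inclusion, take $f\in\ker N_x\cap\ker N_y$. Since $N_x$ is the norm of the cyclic extension $L_{xy}/L_y$ with group $\langle\sigma_x\rangle$, Hilbert's Theorem 90 gives $\ker N_x=\im(1-\sigma_x)$, so I may write $f=(1-\sigma_x)\alpha$ for some $\alpha\in L_{xy}^\times$. By Lemma \ref{lemma:condition1} it then suffices to show that $N_y(\alpha)$ lies in the image of $L_y^\times\xrightarrow{i_x}L_{xy}^\times\xrightarrow{N_y}L_x^\times$, which unwinds to $N_y(\alpha)\in N_{L_y/K}(L_y^\times)$. First I would locate $N_y(\alpha)$: applying $N_y$ to $f=(1-\sigma_x)\alpha$ and using $f\in\ker N_y$ gives $(1-\sigma_x)N_y(\alpha)=0$, so $N_y(\alpha)$, which lies in $L_x^\times$, is moreover fixed by $\sigma_x$ and hence lies in the fixed field $L_y$; therefore $c:=N_y(\alpha)\in L_x^\times\cap L_y^\times=K^\times$. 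Thus $c$ is an element of $K^\times$ that is visibly a norm from $L_{xy}$ down to $L_x$, and the whole lemma reduces to the assertion $K^\times\cap N_{L_{xy}/L_x}(L_{xy}^\times)=N_{L_y/K}(L_y^\times)$, i.e.\ that the relative Brauer class of $c$ for $L_y/K$ vanishes.

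The hard part will be this last membership $c\in N_{L_y/K}(L_y^\times)$. Since $L_y/K$ is cyclic, I would invoke the Hasse norm theorem and verify that $c$ is a local norm from $L_y/K$ at every place $v$ of $K$; because $L_y/K$ is unramified, at a place $v$ inert in $L_y$ this amounts to $p\mid v(c)$, and at split places it is automatic. Feeding in $c=N_{L_{xy}/L_x}(\alpha)$ and computing valuations through the decomposition of $v$ in the biquadratic field $L_{xy}$ settles every place except those whose Frobenius in $\Gal(L_{xy}/K)=\langle\sigma_x\rangle\times\langle\sigma_y\rangle$ has nontrivial components in both factors (the primes inert in both $L_x$ and $L_y$): there the unique prime of $L_x$ above $v$ splits completely in $L_{xy}/L_x$, so the norm condition coming from $L_{xy}/L_x$ at those primes is vacuous and does not by itself force $p\mid v(c)$. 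This inert-in-both case is the crux, and I expect to close it with a genuinely global input — Hasse reciprocity for $L_y/K$ combined with the unramifiedness — rather than with purely local information. Equivalently, and perhaps more transparently, one can phrase the entire argument cohomologically: Hilbert 90 identifies $\ker N_x\cap\ker N_y=\im(1-\sigma_x)\cap\im(1-\sigma_y)$, the quotient of this by $\im(1-\sigma_x)(1-\sigma_y)$ is $\hat H^{-1}(\langle\sigma_y\rangle,\im(1-\sigma_x))$, and via the short exact sequence $0\to L_y^\times\to L_{xy}^\times\xrightarrow{1-\sigma_x}\im(1-\sigma_x)\to 0$ of $\langle\sigma_y\rangle$-modules this group is the kernel of $\hat H^0(\Gal(L_y/K),L_y^\times)\to\hat H^0(\Gal(L_{xy}/L_x),L_{xy}^\times)$. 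Proving that kernel vanishes — which is exactly the Brauer-class statement above — using arithmetic duality and reciprocity is the step I anticipate requiring the most care.
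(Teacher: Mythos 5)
Your proposal tracks the paper's own proof step for step: the easy inclusion, Hilbert 90 to write $f=(1-\sigma_x)\alpha$, the reduction via Lemma \ref{lemma:condition1} to showing $N_y(\alpha)\in N_{L_y/K}(L_y^\times)$, the observation that $N_y(\alpha)$ lands in $K^\times$, and finally the Hasse norm theorem with a case analysis on the splitting of each prime $\p$ in $L_x$ and $L_y$ (unramifiedness disposing of units and archimedean places). Up to and including the case ``$\p$ inert in $L_y$ and split in $L_x$'' your argument and the paper's coincide exactly.

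The divergence is the last case, $\p$ inert in both $L_x$ and $L_y$, which you correctly single out as the crux and then leave open; as written, the proposal is therefore not a complete proof. Your diagnosis of why the local argument fails there is right: the unique prime $\q$ of $L_x$ above such a $\p$ splits completely in $L_{xy}/L_x$, so $v_{\q}(N_y(\alpha))=\sum_{\mathfrak{Q}\mid\q}v_{\mathfrak{Q}}(\alpha)$ and membership in $N_y(L_{xy}^\times)$ imposes no congruence on $v_\p(N_y(\alpha))$ modulo $p$. Two warnings about how you plan to close it. First, the paper itself disposes of this case only with the assertion that ``once again $v_\p(N_y(\alpha))\in p\ZZ$,'' offering no argument, so you cannot expect to find the missing step there. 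Second, the global input you name does not suffice on its own: Artin reciprocity for the unramified extension $L_y/K$ says only that $\prod_\p \Frob_\p^{\,v_\p(c)}=1$ in $\Gal(L_y/K)$ for $c=N_y(\alpha)\in K^\times$ (the Artin map kills principal divisors), and since $p\mid v_\p(c)$ is already known at every other inert prime, this produces a single linear relation among the $v_\p(c)$ for $\p$ inert in both fields --- not the vanishing of each one mod $p$ when two or more such primes occur in the support of $c$. Your cohomological reformulation, identifying the obstruction with $\ker\bigl(K^\times/N_{L_y/K}L_y^\times\to L_x^\times/N_{L_{xy}/L_x}L_{xy}^\times\bigr)$, is correct and is the right frame in which to analyse this kernel (note that, via class field theory, it is exactly the group of $c\in K^\times$ that are local norms from $L_y$ away from the doubly inert primes, modulo global norms), but the step you have deferred is genuinely the entire nonformal content of the lemma, and you should not assume it is routine.
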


\begin{proof}
The containment $\ker N_x \cap \ker N_y \supset \im (1-\sigma_x)(1-\sigma_y)$ is obvious. So suppose now that $f \in \ker N_x \cap \ker N_y$. Then $f = (1-\sigma_x)\alpha = (1-\sigma_y)\beta$ for some $\alpha, \beta \in L_{xy}^\times$. By applying $N_y$ we see that $0 = (1-\sigma_x)N_y(\alpha)$, which implies that the element $N_y(\alpha)$, which a priori is an element of $L_x$, is an element of  $K$, i.e., it is true that 
    \[
        N_y(\alpha) \in \im(K \xrightarrow{i_x} L_x)\,.
    \] 
We will now apply Lemma \ref{lemma:condition1} and show that $N_y(\alpha) = N_y(i_x(u))$ for some $u \in L_y^\times$. The following diagram is commutative
    \[
        \begin{tikzcd}
            L_y \arrow[r,"N_y"] \arrow[d,"i_x"] & K \arrow[d,"i_x"] \\ 
            L_{xy} \arrow[r,"N_y"] & L_x\,.
        \end{tikzcd}
    \] 
Thus, to see that $N_y(\alpha)$ is in the image of the composite $L_y^\times \xrightarrow{i_x} L_{xy}^\times \xrightarrow{N_y} L_x^\times$ it is enough to see that $N_y(\alpha)$, viewed as an element of $K$, is in the image of $N_y\colon L_y^\times \rightarrow K^\times$. To see this, we will use the Hasse norm theorem. We consider $\divis(N_y(\alpha)) \in \Div K$, and for a prime $\p$ of $K$, we let 
    \[
        v_\p(N_y(\alpha)) \in \ZZ 
    \]
be the valuation of $N_y(\alpha) $ at $\p$. We thus have the equality 
    \[
        \divis(N_y(\alpha)) = \prod_\p \p^{v_\p(N_y(\alpha)) }\,.
    \] 
Then, if $\p$ is split in $L_y$, clearly $\p^{v_\p(N_y(\alpha)) }$ is in the image of the associated norm map on divisors. On the other hand, if $\p$ is inert in $L_y$ but split in $L_x$, then we have that, for any prime $\q$ of $L_x$ lying over $\p$, that $\q$ is inert in $L_{xy}$. This shows that $v_\q(N_y(f)) \in p \ZZ$. Since $N_y(\alpha)$ is invariant under $\Gal(L_x /K)$, and  $\Gal(L_x/K)$ acts transitively on the primes over $\p$, the integer $v_\q(N_y(f)) \in p \ZZ$ is independent of the choice of $\q$ lying over $\p$. This implies that  $\p^{v_\p(N_y(\alpha))} \in \Div K$ is in the image of the norm map, since $v_\p(N_y(\alpha)) \in p \ZZ$.  Lastly, the case where the prime $\p$ is inert in both $L_x$ and $L_y$ is clear. Indeed, then once again $v_\p(N_y(\alpha)) \in p \ZZ$. We have thus shown that $\divis(N_y(\alpha))$ lies in the image of the norm $N_y$, that is, it is locally a norm. Hasse's norm theorem now gives the conclusion, since units are always norms in unramified extensions. 
\end{proof}
\begin{remark} \label{rmk:equal}
If we instead in the above do not assume that $x$ and $y$ are distinct, a version of the above proposition still holds. So assume that $x=y$ and denote, by abuse of notation and for the sake of uniformity, by $L_{xy}$ the total ring of fractions of $\OO_{L_x} \otimes_{\OO_K} \OO_{L_x}$. We let $\sigma_x$ a generator for the Galois action on the left factor and $\sigma_y$ a generator for the action on the right factor. Then it is still true that $\ker N_x \cap \ker N_y = \im (1-\sigma_x)(1-\sigma_y)$.  This can be proven in a manner analogous to Lemma \ref{lemma:int-norm-ker} and we leave the details to the reader.
\end{remark}
\subsection{Computations of Massey products} \label{subseq:comp}
In this subsection, we will give  formulas for 3-fold Massey products in the \'etale cohomology of $X$. Recall the definition of $P_y$ in Section \ref{sec:etale}. 
We will find these formulas by first giving, for $x, y\in H^1(X, \ZZ/p\ZZ),$ a formula for the map 
    \[
        x\cupp (-)\colon H^1(X, P_y)\to H^2(X, P_y)\,.    
    \]
By Proposition \ref{prop:coins-et} and with $x,y,z$ as in there, this is enough information to determine the Massey product $\langle x,y,z\rangle$ in favorable situations. Indeed, when either $x=y$ or when 
    \[
        y\cupp (-)\colon H^1(X,\ZZ/p\ZZ) \to H^2(X,\ZZ/p\ZZ)
    \]
is zero, the Massey product $\langle x,y,z\rangle$ is determined by $x\cupp w_z-w_x\cupp z$, so by graded commutativity, it is enough to compute $x\cupp w_z$ and $z\cupp w_x$. We will realize 
    \[
        x \cupp(-)\colon H^1(X,P_y) \to H^2(X,P_y)
    \]
as a connecting homomorphism coming from a short exact sequence 
    \begin{equation}\label{eq:main-ses}
        0 \to P_y \to P_x \otimes P_y \to P_y \to 0
    \end{equation}
of locally constant sheaves. By arithmetic duality, this connecting homomorphism is dual to a map $H^1(X,D(P_y)) \to H^2(X,D(P_y))$ \cite[Lemma 3.4]{Ahlqvist--Carlson-punctured}, \cite[Lemma 4.3]{Demarche--Harari}, and we will compute this map, and then dualize once again to get the formula for $x \cupp (-)$. After this, we will see that when we let $w_z\in H^1(X, P_y)$ be a lift of $z \in H^1(X,\ZZ/p\ZZ)$, the formula becomes more tractable. The element $x\cupp w_z\in H^2(X, P_y)$ will be identified, via arithmetic duality, with a functional on $H^1(X, D(P_y))$. We will describe the functional by the relation   
    \[
        \langle x\cupp w_z, (b,a,J,I)\rangle = \langle z, J_K(x,y, b,a,J,I)\rangle\,,    
    \]
where $\langle -,- \rangle$ denotes evaluation, and $(b,a,J,I)$ represents a class in $H^1(X, D(P_y))$. The ideal $J_K(x,y, b,a,J,I)$ represents a class in $H^2(X,D(\ZZ/p\ZZ))\cong \Cl(K)/p\Cl(K)$ which depends on $x,y,b,a,J$ and $I$. We will give an explicit formula for the ideal class $J_K(x,y,b,a,J,I)$ and hence a number-theoretical formula for $x \cupp w_z$.

After this computation has been made, we will specialize to the case where $x=y$. In this situation, the formula for the ideal class $J_K(x,y,b,a,J,I)$ becomes much more explicit, thus greatly simplifying the formula for $x \cupp w_z$.  

Let $L_y\supseteq K$ be the $\ZZ/p\ZZ$-extension corresponding to $y$, let $G_y=\Gal(L_y/K)$, and let $\sigma_y\in G_y$ be a generator. Then let us note that $P_y,$ as defined in Section \ref{sec:etale}, corresponds to the $G_y$-module $V_y=\FF_p[G_y]/I_y^2$, where $I_y$ is the augmentation ideal. Indeed, by choosing the basis $\{\sigma_y-1, 1\}$ for $V_y$, we see that $G_y$ acts via 
    \[
        \sigma_y\mapsto \begin{pmatrix}
            1 & 1 \\
            0 & 1
        \end{pmatrix} = 
        \begin{pmatrix}
            1 & y(\sigma_y) \\
            0 & 1
        \end{pmatrix}\,.   
    \]
This gives the connection to the Galois module $V_y$ described in Section \ref{sec:galois}, by showing that the Galois modules are isomorphic.

The cup product $x\cupp (-)\colon H^1(X, P_y)\to H^2(X, P_y)$ will be the connecting homomorphism of the short exact sequence (\ref{eq:main-ses}) obtained by tensoring 
    \[
        0\to \ZZ/p\ZZ\to P_x\to \ZZ/p\ZZ\to 0    \label{sequence1}
    \]
with the sheaf $P_y$. The resulting short exact sequence will correspond to a short exact sequence of $G_x\times G_y$-modules
    \begin{equation}\label{eq:ses}
        0 \to \FF_p[G_y]/I_y^2\to \FF_p[G_x\times G_y]/(I_x^2+I_y^2)\to \FF_p[G_y]/I_y^2\to 0\,.   
    \end{equation}
We may view this as a $\pi_1(X)$-module sequence where $\pi_1(X)$ acts diagonally on the middle term. By \cite[V.3.3]{BrownCohomology}, the resulting connecting homomorphism will agree with the cup product $x\cupp (-)\colon H^1(\pi_1(X), V_y)\to H^2(\pi_1(X), V_y)$.   


\subsubsection{Computation of the connecting homomorphism}
The computation of the connecting homomorphism is quite involved, with some technical details provided in Appendix \ref{sec:spectral}. First, we work in $D(X_{\et})$, the derived category of abelian sheaves on the small \'etale site of $X$. We let $C(X_{\et})$ be the category of complexes of abelian \'etale sheaves and denote by $\gamma\colon C(X_{\et}) \rightarrow D(X_{\et})$ the localization map. Our computation is divided into the following steps:
\begin{enumerate}
    \item[(I)] By Artin--Verdier duality (\cite[Lemma 2.13]{Ahlqvist--Carlson-cup}), the map $R \Gamma(X,P_y) \rightarrow R\Gamma(X,P_y[1])$ is Pontryagin dual to the map $R\Hom(P_y[1], \GG_{m}) \rightarrow R \Hom(P_y,\GG_m)$. Thus, it suffices to calculate the latter map. We start by replacing the short exact sequence (\ref{eq:ses}) with the sequence of resolutions in Diagram (\ref{eq:full-resolution}) of Appendix \ref{section:resolutions}, which we denote by $\cE\xrightarrow{\alpha}\cE'\to\cE''$. By taking the mapping cylinder and the cone of $\alpha$, we find a distinguished triangle (T): $\cE\to Cyl(\alpha)\to C(\alpha)\to \cE[1]$, of \emph{locally free} sheaves (see Appendix \ref{sec:spectral}), where the last map represents the connecting homomorphism. Note that the image under $\gamma$ of the zig-zag $\cE \xleftarrow{q} C(\alpha) \xrightarrow{\pr} \cE[1]$ represents the map $\cE \to \cE[1]$ in $D(X_{\et})$.
    \item[(II)] We resolve $\GG_m$ by the complex $\mathcal{C}$ defined by $j_* \mathbb{G}_{m,K} \rightarrow \DIV_X$. Here $j\colon \Spec K \rightarrow X$ is the generic point, and $\DIV_X = \oplus_{p \in X} (i_{p})_* \ZZ$, where $p$ ranges over all closed points of $X$ and $i_p\colon \Spec k(p) \rightarrow X$ is the inclusion. By applying $R\HOM(-, \cC)$ to (T) and using the fact that all the complexes in the first variable are degree-wise locally free, we obtain the exact triangle 
        \[
            \HOM(\cE[1],\mathcal{C}) \to \HOM(C(\alpha),\mathcal{C}) \to \HOM(Cyl(\alpha),\mathcal{C}) \to \HOM(\cE,\mathcal{C})\,.
        \]  
    Note that image under $\gamma$ of the zig-zag (Z): $\HOM(\cE,\mathcal{C}) \xrightarrow{q^*} \HOM(C(\alpha),\mathcal{C}) \xleftarrow{\pr^*} \HOM(\cE[1],\mathcal{C})$ represents the map $R\HOM(\cE,\mathbb{G}_m) \leftarrow R \HOM(\cE[1],\mathbb{G}_m)$.
    \item[(III)] 
    With $\mathbb{G}_m$ replaced by $\mathcal{C}$, the natural map $\Gamma(\HOM(\cE,\mathcal{C}),X) \rightarrow R \Gamma(\HOM(\cE,\mathcal{C}),X)$ is an isomorphism after applying $H^i$ for $i=0,1,2$. The same statement holds with $\cE$ replaced by $C(\alpha)$ (for both these facts, see Proposition \ref{prop:edge}: note that $C(\alpha)$ is a resolution of $\cE$). Since $R \Gamma$ applied to the zig-zag (Z) computes the connecting homomorphism we are after, the isomorphisms on $H^i$, in the aforementioned degrees, imply that the zig-zag
        \begin{equation}\label{ziggy}
            \Hom(\cE,\mathcal{C}) \xrightarrow{q^*} \Hom(C(\alpha),\mathcal{C}) \xleftarrow{\pr^*} \Hom(\cE[1],\mathcal{C}) 
        \end{equation} 
    of complexes of abelian groups represents the connecting homomorphism $H^{1}(X,D(P_y))\to H^{2}(X,D(P_y))$.
    \item[(IV)] In the last step, we explicitly compute what the zig-zag (\ref{ziggy}) does on cohomology. The map $q^*$ can be formally shown to be an isomorphism on cohomology in the necessary degrees. However, to give an explicit formula for the inverse of $q^*$ in cohomology, we need to use theorems from global class field theory.
\end{enumerate}
We now proceed to compute the connecting homomorphism $H^1(X,D(P_y)) \to H^2(X,D(P_y))$, which is dual to the cup product with $x$. 
To start, we realize step (I) above: We take the cone of the left part of Diagram (\ref{eq:full-resolution}) in Appendix \ref{section:resolutions}. Then, composing with the quasi-isomorphism of Diagram (\ref{eq:quasi-iso}), 
we obtain the following diagram:

    \[
        \begin{tikzcd}
                &   \ZZ[G_x\times G_y]^5\ar{r}{\id}\ar{d}{C(\alpha)^{-3}} & \ZZ[G_x\times G_y]^5\ar{d}{\delta^{-1,-2}}\ar{r} & \ZZ[G_y]\ar{d} \\
            \ZZ[G_y]\ar{d}{\delta^{1,-2}} & \ZZ[G_x\times G_y]^6\ar{l}{q_{-2}} \ar{d}{C(\alpha)^{-2}}\ar{r}{\pr} & \ZZ[G_x\times G_y]^3\ar{d}{\delta^{-1,-1}} \ar{r} & \ZZ[G_y]^2\ar{d}\\
            \ZZ[G_y]^2\ar{d}{\delta^{1,-1}} & \ZZ[G_x\times G_y]^4\ar{l}{q_{-1}}\ar{d}{C(\alpha)^{-1}}\ar{r}{\pr} & \ZZ[G_x\times G_y]\ar{r} & \begin{matrix}
                \ZZ\\
                \oplus \\
                \ZZ[G_y]
            \end{matrix} \\
            \begin{matrix}
                \ZZ\\
                \oplus \\
                \ZZ[G_y]
            \end{matrix} & 
            \begin{matrix}
                \ZZ[G_y]\\
                \oplus \\
                \ZZ[G_x] \\
                \oplus \\
                \ZZ[G_x\times G_y]
            \end{matrix}\ar{l}{q_0} & 
        \end{tikzcd}    
    \]
where the horizontal morphism in the middle represents the connecting homomorphism, and the other two horizontal morphisms of complexes are quasi-isomorphisms, which are there to simplify the description of the cohomology groups. The vertical morphisms of the cone are:
    \[
        \begin{split}
            C(\alpha)^{-3} & = \scalemath{0.8}{
            \begin{pmatrix}
                0 & -\Gamma_y & \Delta_y & 1-T_x & 0 \\
                1-T_x & 1-T_y & 0 & 0 & 0 \\
                -p & 0 & 0 & -(1-T_y)^2 & \Delta_x \\
                0 & 1-T_x & 0 & 0 & 0 \\
                1-T_x & 0 & 0 & 0 & 0 \\
                0 & 0 & 0 & -1 & 0
            \end{pmatrix}} \,, \\
            C(\alpha)^{-2} & = \scalemath{0.8}{
            \begin{pmatrix}
                -(T_y-1)^2 & -p & T_x-1 & 0 & 0 & 0 \\
                0 & T_x-1 & 0 & 1-T_y & 1-T_x & 0\\
                (1-T_y)^2 & p & 0 & 0 & \Delta_x-p & (1-T_y)^2(1-T_x) \\
                (T_y-1)(1-T_x) & 0 & 0 & \Delta_y-p & 0 & -(1-T_y)(1-T_x)^2
            \end{pmatrix}} \,, \\
            C(\alpha)^{-1} & = \scalemath{0.8}{
            \begin{pmatrix}
                \varepsilon_x & 0 & \varepsilon_x & 0 \\
                0 & 0 & 0 & \varepsilon_y \\
                0 & p & 1-T_x & 1-T_y
            \end{pmatrix}}\,,
        \end{split}
    \]
and 
    \[
        \begin{split}
            q_{-2} & = \begin{pmatrix}
                0 & 0 & 0 & -\varepsilon_x & 0 & 0
            \end{pmatrix} \\
            q_{-1} & = \begin{pmatrix}
                0 & \varepsilon_x & 0 & 0 \\
                0 & 0 & 0 & \varepsilon_x
            \end{pmatrix} \\
            q_0 & = \begin{pmatrix}
                0 & \varepsilon_x & 0 \\
                0 & 0 & \varepsilon_x
            \end{pmatrix}\,.
        \end{split}    
    \]

To complete step (II), let $\cC$ be the resolution $j_*{\GG_{m,X}}\to \DIV_X$ of $\GG_{m,X}$, concentrated in degree 0 and 1. Then apply $\Gamma\circ\HOM(-,\cC)$ to obtain the diagram

    \begin{equation}\label{eq:massive}
        \begin{tikzcd}
            \begin{matrix}
                K^\times\\
                \oplus \\
                L_y^\times
            \end{matrix}\ar{r}\ar{d}
            &
            \begin{matrix}
                L_y^\times \\
                \oplus \\
                L_x^\times \\ 
                \oplus \\
                L_{xy}^\times 
            \end{matrix}\ar{d}
            & \\
            \begin{matrix}
                (L_y^\times)^2 \\ 
                \oplus \\
                \Div(K) \\
                \oplus \\
                \Div(L_y)
            \end{matrix}\ar{r} \ar{d}
            &
            \begin{matrix}
                (L_{xy}^\times)^4 \\ 
                \oplus \\
                \Div(L_y) \\
                \oplus \\
                \Div(L_x) \\
                \oplus \\
                \Div(L_{xy})
            \end{matrix}\ar{d}{\varphi_1}
            & L_{xy}^\times\ar{l}\ar{d} & 
            \begin{matrix}
                K^\times \\
                \oplus \\
                L_y^\times
            \end{matrix}\ar{l}\ar{d}
            \\
            \begin{matrix}
                L_y^\times \\
                \oplus \\
                \Div(L_y)^2
            \end{matrix}\ar{r}{\iota_2} \ar{d}
            &
            \begin{matrix}
                (L_{xy}^\times)^6\\
                \oplus \\
                \Div(L_{xy})^4
            \end{matrix}\ar{d}{\varphi_2}
            & 
            \begin{matrix}
                (L_{xy}^\times)^3\\
                \oplus \\
                \Div(L_{xy})
            \end{matrix}\ar{l}[swap]{\iota_1}\ar{d} & 
            \begin{matrix}
                (L_y^\times)^2 \\ 
                \oplus \\
                \Div(K) \\
                \oplus \\
                \Div(L_y)
            \end{matrix}\ar{l}[swap]{\iota_0}\ar{d}{\psi}
            \\
            \begin{matrix}
                K^\times\\
                \oplus \\
                \Div(L_y)
            \end{matrix}\ar{r} \ar{d}
            &
            \begin{matrix}
                (L_{xy}^\times)^5\\
                \oplus \\
                \Div(L_{xy})^6
            \end{matrix}\ar{d}
            & 
            \begin{matrix}
                (L_{xy}^\times)^5\\
                \oplus \\
                \Div(L_{xy})^3
            \end{matrix}\ar{l}\ar{d} & 
            \begin{matrix}
                L_y^\times \\
                \oplus \\
                \Div(L_y)^2
            \end{matrix}\ar{l}\ar{d}
            \\
            \dots\ar{r} & \dots & \dots\ar{l} & 
             \begin{matrix}
                K^\times\\
                \oplus \\
                \Div(L_y)
            \end{matrix}\ar{l}
        \end{tikzcd}
    \end{equation}
which, by step (III), represents the connecting homomorphism in cohomology in degrees 0 and 1, coming from the exact sequence 
    \[
        0\to D(P_y)\to D(P_x\otimes P_y) \to D(P_y) \to 0
    \] 
and which agrees with the dual cup product 
    \[
        c_x^{\sim}\colon H^i(X,D(P_y)) \to H^{i+1}(X,D(P_y))    
    \]
for $i=0,1$. It remains to fulfil step (IV). In the above diagram, 
$L_{xy}^\times$ is the units in the total ring of fractions of $\mathcal{O}_{L_x} \otimes_{\mathcal{O}_K} \mathcal{O}_{L_y}$ and $\Div(L_{xy})$ is the free abelian group on the closed points of $\Spec \mathcal{O}_{L_x} \otimes_{\mathcal{O}_K} \mathcal{O}_{L_y}$. 
In Diagram (\ref{eq:massive}), we have
    \[
        \iota_0  = \scalemath{0.8}{
        \begin{pmatrix}
            0 & (1-\sigma_y)i_x & 0 & 0 \\
            i_x & 0 & 0 & 0 \\
            0 & 0 & 0 & 0 \\
            0 & 0 & 0 & i_x 
        \end{pmatrix}}\,, \quad 
        \iota_1  = \scalemath{0.8}{
        \begin{pmatrix}
            1 & 0 & 0 & 0 \\
            0 & 1 & 0 & 0 \\
            0 & 0 & 1 & 0 \\
            0 & 0 & 0 & 0 \\
            0 & 0 & 0 & 0 \\
            0 & 0 & 0 & 0 \\
            0 & 0 & 0 & 1 \\
            0 & 0 & 0 & 0 \\
            0 & 0 & 0 & 0 \\
            0 & 0 & 0 & 0 
        \end{pmatrix}}\,, \quad 
        \iota_2  = \scalemath{0.8}{
            \begin{pmatrix}
                0 & 0 & 0 \\
                0 & 0 & 0 \\
                0 & 0 & 0 \\
                -i_x & 0 & 0 \\
                0 & 0 & 0 \\
                0 & 0 & 0 \\
                0 & 0 & 0 \\
                0 & i_x & 0 \\
                0 & 0 & 0 \\
                0 & 0 & i_x
            \end{pmatrix}}\,,
    \]
    \[
        \psi = \scalemath{0.8}{
        \begin{pmatrix}
            \sigma_y-1 & p-N_y & 0 & 0 \\
            \divis & 0 & 0 & p \\
            0 & \divis & i_y & 1-\sigma_y
        \end{pmatrix}} \,,
    \]
    \[
        \varphi_1  = \scalemath{0.8}{
        \begin{pmatrix}
            -(\sigma_y-1)^2 & 0 & (\sigma_y-1)^2 & (\sigma_y-1)(1-\sigma_x) & 0 & 0 & 0 \\
            -p & \sigma_x-1 & p & 0 & 0 & 0 & 0 \\
            \sigma_x-1 & 0 & 0 & 0 & 0 & 0 & 0  \\
            0 & 1-\sigma_y & 0 & N_y-p & 0 & 0 & 0 \\
            0 & 1-\sigma_x & N_x-p & 0 & 0 & 0 & 0 \\
            0 & 0 & (1-\sigma_y)^2(1-\sigma_x) & -(1-\sigma_y)(1-\sigma_x)^2 & 0 & 0 & 0 \\
            \divis & 0 & 0 & 0 & i_x & 0 & 0 \\
            0 & \divis & 0 & 0 & 0 & 0 & p \\
            0 & 0 & \divis & 0 & i_x & 0 & 1-\sigma_x \\
            0 & 0 & 0 & \divis & 0 & i_y & 1-\sigma_y
        \end{pmatrix}} \,,
    \]
    \[
        \varphi_2  = \scalemath{0.8}{
            \begin{pmatrix}
                0 & 1-\sigma_x & -p & 0 & 1-\sigma_x & 0 & 0 & 0 & 0 & 0 \\
                -\Gamma_y & 1-\sigma_y & 0 & 1-\sigma_x & 0 & 0 & 0 & 0 & 0 & 0 \\
                N_y & 0 & 0 & 0 & 0 & 0 & 0 & 0 & 0 & 0 \\
                1-\sigma_x & 0 & -(1-\sigma_y)^2 & 0 & 0 & -1 & 0 & 0 & 0 & 0 \\
                0 & 0 & N_x & 0 & 0 & 0 & 0 & 0 & 0 & 0 \\
                \divis & 0 & 0 & 0 & 0 & 0 & (\sigma_y-1)^2 & 0 & -(\sigma_y-1)^2 & (1-\sigma_y)(1-\sigma_x)\\
                0 & \divis & 0 & 0 & 0 & 0 &  p & 1-\sigma_x & -p & 0 \\
                0 & 0 & \divis & 0 & 0 & 0 & 1-\sigma_x & 0 & 0 & 0 \\
                0 & 0 & 0 & \divis & 0 & 0 & 0 & \sigma_y-1 & 0 & p-N_y \\
                0 & 0 & 0 & 0 & \divis & 0 & 0 & \sigma_x-1 & p-N_x & 0 \\
                0 & 0 & 0 & 0 & 0 & \divis & 0 & 0 & -(1-\sigma_y)^2(1-\sigma_x) & (1-\sigma_y)(1-\sigma_x)^2
            \end{pmatrix}} \,,
    \]
where, for $a\in L_{xy}^\times$, we have
    \[
        \Gamma_y(a):=-\sum_{n=1}^{p-1}n\sigma_y^{n}(a)  
    \]
and $\sigma_x,\sigma_y$ are some fixed generators of $\Gal(L_x/K)$ and $\Gal(L_y/K)$ respectively. We may view these as elements of $\Gal(L_{xy}/K)$ via some fixed isomorphism $\Gal(L_{xy}/K)\cong \Gal(L_x/K)\times \Gal(L_y/K)$. We may interpret $\Gal(L_{xy}/K)$ as the Galois group of the torsor $\Spec \OO_{L_x} \otimes_{\OO_K} \OO_{L_y}$ over $\Spec \OO_K$. The maps
    \[
        \begin{split}
        	& i_x\colon L_y^\times \to L_{xy}^\times\,,  \quad i_x\colon \Div(L_y)\to \Div(L_{xy}) \\
            & i_y\colon L_x^\times \to L_{xy}^\times\,, \quad i_y\colon \Div(L_x)\to \Div(L_{xy})     
        \end{split}
    \]
are induced by the maps     
    \[
        \begin{split}
            i_y\colon \OO_{L_x} & \to \OO_{L_x}\otimes_{\OO_K}\OO_{L_y} \\
            a & \mapsto a\otimes 1
        \end{split}   
        \quad\mbox{ and }\quad
        \begin{split}
            i_x\colon \OO_{L_y} & \to \OO_{L_x}\otimes_{\OO_K}\OO_{L_y} \\
            b & \mapsto 1\otimes b
        \end{split} \,.
    \]

We are now in a position to state our main results, and complete step (IV):

\begin{lemma}\label{lem:formula}
Let $L_x, L_y\supset K$ be two unramified extensions of degree $p$ representing elements $x, y\in H^1(X,\ZZ/p\ZZ)$. 
Then, for any $w\in H^1(X,P_y)$ and any $(b,a,J,I)$ representing an element in $H^1(X,D(P_y))$, there exist elements 
    \[
        a_1, b_1\in L_{xy}^\times\,, \ a_2\in L_x^\times\,, \ 
        I_1\in \Div(L_{xy})\,, \ I_2\in \Div(L_x)\,, 
    \]
such that $x\cupp w\in H^2(X,P_y)\cong H^1(X,D(P_y))^\sim$ satisfies the equality
    \[
        \langle x\cupp w, (b,a,J,I)\rangle=\langle w, 
        \begin{pmatrix}
            \alpha & J_1 & J_2
        \end{pmatrix}\rangle 
    \]
where 
    \[
        \begin{split}
            i_x(\alpha) & = (1-\sigma_y)(\Gamma_ya_1-\Gamma_xb_1)\,, \\
            J_1 & = N_x(I_1)-\frac{p(p-1)}{2}I\,,    \\
            i_x(J_2) & = \divis(a_1)+(1-\sigma_y)I_1-i_y(I_2)\,,
        \end{split}
    \]
and 
    \[
        \begin{split}
            (1)\quad & i_x(b)+N_x(b_1) = 0 \,, \\
            (2)\quad & i_x(a)+(1-\sigma_y)b_1 = (1-\sigma_x)a_1+i_y(a_2) \,, \\
            (3)\quad & \divis(b_1)-i_x(I)+(1-\sigma_x)I_1 = 0 \,, \\
            (4)\quad & \divis(a_2)+i_x(J)+(1-\sigma_x)I_2 = 0 \,.
        \end{split}
    \]
\end{lemma}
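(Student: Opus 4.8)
The plan is to combine Artin--Verdier duality with an explicit diagram chase on Diagram (\ref{eq:massive}). As recalled above, the operator $x\cupp(-)\colon H^1(X,P_y)\to H^2(X,P_y)$ is adjoint, under the perfect arithmetic-duality pairing between $H^i(X,P_y)$ and $H^{3-i}_{\fl}(X,D(P_y))$, to the connecting homomorphism
    \[
        c_x^\sim\colon H^1_{\fl}(X,D(P_y))\to H^2_{\fl}(X,D(P_y))
    \]
of the short exact sequence $0\to D(P_y)\to D(P_x\otimes P_y)\to D(P_y)\to 0$. Hence for every $w\in H^1(X,P_y)$ we have $\langle x\cupp w,(b,a,J,I)\rangle=\langle w,c_x^\sim(b,a,J,I)\rangle$, and the lemma reduces to exhibiting an explicit $2$-cocycle of the subcomplex computing $D(P_y)$ which represents $c_x^\sim(b,a,J,I)$. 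Since Diagram (\ref{eq:massive}) realizes $c_x^\sim$ at the level of cochain complexes in degrees $0$ and $1$, this is a finite diagram chase.

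First I would regard $(b,a,J,I)$ as a $1$-cocycle of the quotient copy of $D(P_y)$ (so that, in particular, the differential $\psi$ annihilates it) and lift it through the maps $\iota_0$ and $\iota_1$ into the middle complex $D(P_x\otimes P_y)$. A general lift is recorded precisely by the auxiliary data $a_1,b_1\in L_{xy}^\times$, $a_2\in L_x^\times$, $I_1\in\Div(L_{xy})$ and $I_2\in\Div(L_x)$, and the four equations (1)--(4) are exactly the conditions that make this a legitimate step of the snake lemma: that the chosen element project to $(b,a,J,I)$, and that its image under the middle differential $\varphi_1$ lie in the image of the inclusion $\iota_2$ of the subcomplex. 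Among (1)--(4), the equations living in the unit groups record the $L_{xy}^\times$-coordinates of this vanishing requirement, while those living in divisor groups record its $\Div(L_{xy})$-coordinates; together they say precisely that all coordinates of $\varphi_1$ applied to the lift vanish except those in the image of $\iota_2$.

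Next I would evaluate the surviving coordinates: pulling $\varphi_1$ of the lift back along $\iota_2$ into the degree-$2$ term $L_y^\times\oplus\Div(L_y)^2$ of the subcomplex produces the three quantities $\alpha,J_1,J_2$. The only input beyond formal manipulation is the group-ring identity $(1-\sigma_y)\Gamma_y=p-N_y$, its analogue $(1-\sigma_x)\Gamma_x=p-N_x$, and the evaluation $\sum_{n=1}^{p-1}n=\tfrac{p(p-1)}{2}$. These allow one to rewrite the norm- and $p$-entries occurring in $\varphi_1$ by means of the operators $\Gamma_x,\Gamma_y$, which is what turns the unit-coordinate into $i_x(\alpha)=(1-\sigma_y)(\Gamma_y a_1-\Gamma_x b_1)$ and produces the correction term $-\tfrac{p(p-1)}{2}I$ in $J_1=N_x(I_1)-\tfrac{p(p-1)}{2}I$; the coordinate $J_2$ is then read off directly from the divisor rows as $i_x(J_2)=\divis(a_1)+(1-\sigma_y)I_1-i_y(I_2)$.

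The genuinely structural steps---that Diagram (\ref{eq:massive}) computes $c_x^\sim$ and that duality converts $x\cupp w$ into a pairing against $c_x^\sim(b,a,J,I)$---are already available from the construction of the diagram and the functoriality of Artin--Verdier duality established earlier. I therefore expect the sole obstacle to be the bookkeeping: carrying all of the coordinates faithfully through the large matrices $\iota_0,\iota_1,\varphi_1$ and $\iota_2$ and checking that the chase closes up to give exactly $(\alpha,J_1,J_2)$ subject to (1)--(4), with no sign or indexing slip and with the $\Gamma$-operators inserted at the right places. This is tedious but entirely mechanical, and constitutes the real work of the proof.
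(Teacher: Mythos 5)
Your overall strategy is the paper's: reduce via Artin--Verdier duality to computing the connecting homomorphism $c_x^\sim$ on $(b,a,J,I)$, and carry out a chase on Diagram (\ref{eq:massive}) by pushing $(b,a,J,I)$ into the middle column via $\iota_1\circ\iota_0$, correcting by elements of $\im\varphi_1$ until the result lies in $\im\iota_2$, and reading off $(\alpha,J_1,J_2)$. That skeleton, and the role of the identities $(1-\sigma_y)\Gamma_y=p-N_y$ and $\sum_{n=1}^{p-1}n=\tfrac{p(p-1)}{2}$ in producing the $-\tfrac{p(p-1)}{2}I$ term, match the paper.

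However, there is a genuine gap in your claim that the remaining work is ``entirely mechanical'' bookkeeping whose only non-formal input is a pair of group-ring identities. The columns of Diagram (\ref{eq:massive}) are resolutions computing nonzero cohomology, so they are \emph{not} exact complexes of abelian groups, and the solvability of the conditions (1)--(4) is not automatic from the snake-lemma formalism. Concretely: producing $b_1$ with $i_x(b)+N_x(b_1)=0$ requires Hasse's norm theorem, applied after one checks from the cocycle relation $\divis(b)+pI=0$ (i.e.\ $\psi(b,a,J,I)=0$) that $i_x(b)$ is everywhere locally a norm; producing $a_1$ with $(1-\sigma_y)i_x(a)+(1-\sigma_y)^2b_1=(1-\sigma_y)(1-\sigma_x)a_1$ requires first verifying that the left-hand side lies in $\ker N_x\cap\ker N_y$ and then invoking Lemma \ref{lemma:int-norm-ker} (or Remark \ref{rmk:equal} when $x=y$), which identifies this intersection with $\im(1-\sigma_x)(1-\sigma_y)$ and is itself proved with Hasse's norm theorem; and the elements $I_1$, $I_2$, $J_2$ are obtained by analogous image computations in the divisor groups, each preceded by a vanishing check that uses the relations in $\ker\psi$. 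These are arithmetic facts about unramified extensions --- indeed they are the reason the preceding subsection on $\ker N_x\cap\ker N_y$ exists --- and a proof that omits them has no way to start the chase. Your write-up correctly predicts \emph{what} the equations (1)--(4) mean, but not \emph{why} they admit solutions, and that is the real content of the lemma.
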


\begin{remark}
    Note that relation (2) of Lemma \ref{lem:formula} implies that $N_x(a_2)=N_y(a)$.  
\end{remark}
\begin{remark}
Despite the authors' best efforts, we are unable to find explicit expressions for the elements $\alpha$ and $J_2$ in the above formula. Clearly, an explicit expression for these elements would greatly simplify the formula in Lemma \ref{lem:formula}.
\end{remark}
\begin{proof}

Take $(b,a,J,I)^T\in \ker \psi\subseteq 
    (L_y^\times)^2 
    \oplus
    \Div(K)
    \oplus
    \Div(L_y)$. This maps via $\iota_1\circ\iota_0$ to the elements
    \[
        \begin{pmatrix}
            (1-\sigma_y)i_x(a) &
            i_x(b) &
            0 &
            0 &
            0 &
            0 &
            i_x(I) &
            0 &
            0 &
            0
        \end{pmatrix}^T\,.
    \]
We want to modify this element by the image of $\varphi_1$ to get an element which lies in the image of $\iota_2$. 
Writing everything additively, we have the relations
    \begin{equation}\label{eq:rel1}
        \begin{split}
            (1)\quad & (1-\sigma_y)(b-\Gamma_ya) = 0 \\
            (2)\quad & \divis(b)+pI = 0 \,.
        \end{split}
    \end{equation}       

The second relation implies that $\divis(i_x(b))=N_x(-i_x(I))$ and hence Hasse's norm theorem says that $i_x(b)+N_x(b_1)=0$ for some $b_1\in L_{xy}^\times$. We have $N_x=p+(\sigma_x-1)\Gamma_x$ and hence we get 
    \begin{equation}\label{eq:missing}\scalemath{0.8}{
        \varphi_1\begin{pmatrix}
            0 \\
            \Gamma_xb_1 \\
            b_1 \\
            0 \\
            -I \\
            0 \\
            0
        \end{pmatrix}+
        \begin{pmatrix}
            (1-\sigma_y)i_x(a) \\ 
            i_x(b) \\
            0 \\ 
            0 \\
            0 \\
            0 \\
            i_x(I) \\
            0 \\
            0 \\
            0
        \end{pmatrix}= 
        \begin{pmatrix}
            (\sigma_y-1)^2b_1 \\
            N_x(b_1) \\ 
            0 \\
            (1-\sigma_y)\Gamma_xb_1 \\ 
            0 \\ 
            (1-\sigma_y)^2(1-\sigma_x)b_1 \\ 
            -i_x(I) \\ 
            \divis(\Gamma_xb_1) \\ 
            \divis(b_1)-i_x(I) \\ 
            0 
        \end{pmatrix}+
        \begin{pmatrix}
            (1-\sigma_y)i_x(a) \\ 
            i_x(b) \\
            0 \\ 
            0 \\
            0 \\
            0 \\
            i_x(I) \\
            0 \\
            0 \\
            0
        \end{pmatrix}=
        \begin{pmatrix}
            (1-\sigma_y)i_x(a)+(1-\sigma_y)^2b_1 \\ 
            0 \\
            0 \\
            (1-\sigma_y)\Gamma_xb_1 \\ 
            0 \\ 
            (1-\sigma_y)^2(1-\sigma_x)b_1 \\
            0 \\
            \divis(\Gamma_xb_1) \\ 
            \divis(b_1)-i_x(I) \\ 
            0 
        \end{pmatrix}}\,.
    \end{equation}
We have 
    \[
        \begin{split}
            N_x((1-\sigma_y)i_x(a)+(1-\sigma_y)^2b_1) & = p(1-\sigma_y)i_x(a)-(1-\sigma_y)^2i_x(b) \\
            & = (1-\sigma_y)^2i_x(\Gamma_ya-b) \\
            & = 0  
        \end{split} 
    \]
and hence Lemma \ref{lemma:int-norm-ker} when $x \neq y$ and Remark \ref{rmk:equal} implies that there is an $a_1\in L_{xy}^\times$ such that 
    \[
        (1-\sigma_y)i_x(a)+(1-\sigma_y)^2b_1 - (1-\sigma_y)(1-\sigma_x)a_1=0\,.    
    \]
Note that $(1-\sigma_y)^2(1-\sigma_x)b_1-(1-\sigma_y)(1-\sigma_x)^2a_1=(1-\sigma_y)(1-\sigma_x)i_x(a)=0$ and if we add the element 
    $
        \varphi_1
        \begin{pmatrix}
            0 & 0 & 0 & a_1 & 0 & 0
        \end{pmatrix}^T
    $
we get 
    \[
        \begin{pmatrix}
            -(1-\sigma_y)(1-\sigma_x)a_1 \\
            0 \\ 
            0 \\
            (N_y-p)a_1 \\
            0 \\
            -(1-\sigma_y)(1-\sigma_x)^2a_1 \\
            0 \\ 
            0 \\
            0 \\
            \divis(a_1) 
        \end{pmatrix}+
        \begin{pmatrix}
            (1-\sigma_y)i_x(a)+(1-\sigma_y)^2b_1 \\ 
            0 \\
            0 \\
            (1-\sigma_y)\Gamma_xb_1 \\ 
            0 \\ 
            (1-\sigma_y)^2(1-\sigma_x)b_1 \\
            0 \\
            \divis(\Gamma_xb_1) \\ 
            \divis(b_1)-i_x(I) \\ 
            0 
        \end{pmatrix}=
        \begin{pmatrix}
            0 \\ 
            0 \\
            0 \\
            (1-\sigma_y)(\Gamma_xb_1-\Gamma_ya_1) \\ 
            0 \\ 
            0\\
            0 \\
            \divis(\Gamma_xb_1) \\ 
            \divis(b_1)-i_x(I) \\ 
            \divis(a_1) 
        \end{pmatrix}\,.
    \]
Note that $N_x(\divis(b_1)-i_x(I))=-i_x(\divis(b)+pI)=0$ and hence there exists an $I_1$ such that $\divis(b_1)-i_x(I)+(1-\sigma_x)I_1=0$. Adding $\varphi_1\begin{pmatrix}
    0 & 0 & 0 & 0 & 0 & 0 & I_1
\end{pmatrix}^T$ we get  
    \[
        \begin{pmatrix}
            0 \\
            0 \\ 
            0 \\
            0 \\
            0 \\
            0 \\
            0 \\ 
            pI_1 \\
            (1-\sigma_x)I_1 \\
            (1-\sigma_y)I_1 
        \end{pmatrix}+
        \begin{pmatrix}
            0 \\ 
            0 \\
            0 \\
            (1-\sigma_y)(\Gamma_xb_1-\Gamma_ya_1) \\ 
            0 \\ 
            0\\
            0 \\
            \divis(\Gamma_xb_1) \\ 
            \divis(b_1)-i_x(I) \\ 
            \divis(a_1) 
        \end{pmatrix}=
        \begin{pmatrix}
            0 \\ 
            0 \\
            0 \\
            (1-\sigma_y)(\Gamma_xb_1-\Gamma_ya_1) \\ 
            0 \\ 
            0\\
            0 \\
            \divis(\Gamma_xb_1)+pI_1 \\ 
            0 \\ 
            \divis(a_1) +(1-\sigma_y)I_1
        \end{pmatrix}\,.
    \]
This element lies in the kernel of $\varphi_2$ and hence $(1-\sigma_x)((1-\sigma_y)(\Gamma_xb_1-\Gamma_ya_1))=0$. We also have 
    \[
        (1-\sigma_x)(\divis(\Gamma_xb_1)+pI_1)=\divis(i_x(b)+pb_1)-p(\divis(b_1))+pi_x(I)=0\,.    
    \]
Finally, 
    \[
        \begin{split}
            (1-\sigma_y)(1-\sigma_x)(\divis(a_1) +(1-\sigma_y)I_1) & = (1-\sigma_y)i_x(a+(1-\sigma_y)I) \\
            & = -(1-\sigma_y)i_y(J) \\
            & = 0\,.
        \end{split}
    \]
Hence there exist elements $I_2\in \Div(L_x)$ and $J_2\in \Div(L_y)$ such that $\divis(a_1) +(1-\sigma_y)I_1-i_y(I_2)=i_x(J_2)$. Adding $\varphi_1\begin{pmatrix}
    0 & 0 & 0 & 0 & 0 & I_2 & 0
\end{pmatrix}^T$ we get 
    \[
        \begin{pmatrix}
            0 \\ 
            0 \\
            0 \\
            (1-\sigma_y)(\Gamma_xb_1-\Gamma_ya_1) \\ 
            0 \\ 
            0\\
            0 \\
            \divis(\Gamma_xb_1)+pI_1 \\ 
            0 \\ 
            \divis(a_1)+(1-\sigma_y)I_1+i_y(I_2)
        \end{pmatrix}    
    \]
which is of the form $\iota_2\begin{pmatrix}
    \alpha & J_1 & J_2
\end{pmatrix}$.   
\end{proof}

Note that the short exact sequence 
    \[
        0\to \ZZ/p\ZZ \to P_y \to \ZZ/p\ZZ \to 0    
    \]
induces, after taking Cartier duals, a long exact sequence  
    \[
        \dots\to H^1(X,D(\ZZ/p\ZZ)) \xrightarrow{
            \begin{pmatrix}
                i_y & 0 \\
                0 & 0 \\
                0 & 0 \\
                0 & i_y
            \end{pmatrix}
        } H^1(X, D(P_y)) \xrightarrow{
            \begin{pmatrix}
                0 & N_y & 0 & 0 \\
                0 & 0 & 1 & 0 
            \end{pmatrix}
        } H^1(X,D(\ZZ/p\ZZ))\xrightarrow{c_y^\sim} \dots    
    \]
    \[
        \xrightarrow{c_y^\sim} H^2(X,D(\ZZ/p\ZZ)) \xrightarrow{
            \begin{pmatrix}
                0 \\ i_y \\ 0
            \end{pmatrix}
        } H^2(X, D(P_y)) \xrightarrow{
            \begin{pmatrix}
                0 & 0 & N_y
            \end{pmatrix}
        } H^2(X,D(\ZZ/p\ZZ))\xrightarrow{c_y^\sim} H^3(X,D(\ZZ/p\ZZ))   \,.
    \]
Splitting this long exact sequence into short exact sequences of $\FF_p$-vector spaces, we get non-canonical isomorphisms 
    \[
        \begin{split}
            H^1(X, D(P_y)) &  \cong H^1(X,D(\ZZ/p\ZZ))/c_y^\sim \oplus \ker c_y^\sim\,, \\
            H^2(X, D(P_y)) &  \cong \Cl(K)/(p,c_y^\sim) \oplus N_y(\Cl(L_y))/p\,.
        \end{split}    
    \]
Note that when $p$ is odd and $K$ is an imaginary quadratic field, since $c_y^\sim$ is then the zero map and since $H^1(X,D(\ZZ/p\ZZ))$ then identifies with $\Cl(K)[p]$, the $p$-torsion of $\Cl(K)$, we get, after choices of splittings, isomorphisms
\[
    \begin{split}
        H^1(X, D(P_y)) & \cong  \Cl(K)[p] \oplus \ker c_y^\sim\,, \\
        H^2(X, D(P_y)) & \cong \Cl(K)/p \oplus N_y(\Cl(L_y))/p\,.
    \end{split}    
\]

Now choose a section $\zeta\colon N_y(\Cl(L_y))/p \to H^2(X, D(P_y))$, inducing a splitting as above, and let $\rho\colon H^2(X, D(P_y))\to H^2(X, D(P_y))$ be the projection which is the composition 
    \[
        H^2(X, D(P_y)) \to N_y(\Cl(L_y))/p \xrightarrow{\zeta} H^2(X, D(P_y))\,.  
    \]

\begin{remark} \label{rmk:split}
    Let $z\in H^1(X,\ZZ/p\ZZ)\cong H^2(X,D(\ZZ/p\ZZ))^\sim\cong \Hom(\Cl(K)/p,\frac{1}{p}\ZZ/\ZZ)$. If $y\cupp z=0$, then, by using the above splitting, we may define a lift $$w_z\in H^1(X,P_y)\cong H^2(X, D(P_y))^\sim = \Hom(\Cl(K)/c_y^\sim \oplus N_y(\Cl(L_y))/p, \frac{1}{p}\ZZ/\ZZ)$$ of $z$ by $w_z(s,t)=z(s)$. We call this lift $w_z$ the \emph{canonical lift of }$z$. 
\end{remark}
 
The following theorem simplifies the formula of Lemma \ref{lem:formula} by using the above splitting.

\begin{theorem}\label{thm:formula2.0}
    Let $L_x, L_y\supset K$ be two unramified extensions of degree $p$ representing elements $x, y\in H^1(X,\ZZ/p\ZZ)$ and let $z\in H^1(X,\ZZ/p\ZZ)$ be such that $y\cupp z = 0$. Choose a section $\zeta\colon N_y(\Cl(L_y))/p \to H^2(X, D(P_y))$ of the canonical map $H^2(X, D(P_y)) \to N_y(\Cl(L_y))/p$ inducing a projection $\rho\colon H^2(X, D(P_y)) \to H^2(X, D(P_y))$ as above.
    Then, for any lift $w_z\in H^1(X,P_y)$ of $z$ the element and any $(b,a,J,I)$ representing an element in $H^1(X,D(P_y))$, there exist elements   
    \[
            a_1, b_1\in L_{xy}^\times\,, \ a_2, a_3\in L_x^\times\,, \ 
            I_1\in \Div(L_{xy})\,, \ I_2\in \Div(L_x)\,, 
        \]
        such that $x\cupp w_z\in H^2(X,P_y)\cong H^1(X,D(P_y))^\sim$ satisfies the equality
        \[
            \langle x\cupp w_z, (b,a,J,I)\rangle = \langle z, J_K\rangle 
        \]
   where $i_y(J_K) = J_1-\Gamma_yJ_2+i_y(N_y(J_3))-\frac{p(p-1)}{2} i_y(J_D)+\divis(\alpha')$, 
        \[
            \begin{split}
                i_x(\alpha')+\Gamma_xb_1 & = \Gamma_ya_1+i_y(a_3)\,, \\
                J_1 & = N_x(I_1)-\frac{p(p-1)}{2}I\,,    \\
                i_x(J_2) & = \divis(a_1)+(1-\sigma_y)I_1-i_y(I_2)\,, \\
            \end{split}
        \]
    and 
        \[
            \begin{split}
                (1)\quad & i_x(b)+N_x(b_1) = 0 \,, \\
                (2)\quad & i_x(a)+(1-\sigma_y)b_1 = (1-\sigma_x)a_1+i_y(a_2) \,, \\
                (3)\quad & i_x(I) = \divis(b_1)+(1-\sigma_x)I_1 \,, \\
                (4)\quad & i_x(J) = \divis(-a_2)+(1-\sigma_x)(-I_2) \,, \\
                (5)\quad & J_2-\rho(J_2)=\divis(a_4)+i_y(J_D)+(1-\sigma_y)J_3\,,
            \end{split}
        \] 
    for some element $a_4\in L_y^\times$ and some fractional ideal $J_D\in \Div(K)$.

\end{theorem}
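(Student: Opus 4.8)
The plan is to feed the output of Lemma~\ref{lem:formula} into the splitting of $H^2(X,D(P_y))$ and to exploit that $w_z$ is the canonical lift of $z$ from Remark~\ref{rmk:split}. Applying Lemma~\ref{lem:formula} to $w=w_z$ gives
\[
    \langle x\cupp w_z,(b,a,J,I)\rangle = \Big\langle w_z, \begin{pmatrix} \alpha & J_1 & J_2\end{pmatrix}\Big\rangle,
\]
with $\alpha,J_1,J_2$ and relations (1)--(4) exactly as there; here $(\alpha,J_1,J_2)\in L_y^\times\oplus\Div(L_y)^2$ is a cocycle representing the class of $H^2(X,D(P_y))$ against which the functional $w_z$ is paired. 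Since $w_z$ is the canonical lift, under the chosen splitting it vanishes on the summand $N_y(\Cl(L_y))/p$ and restricts to $z$ on $\Cl(K)/(p,c_y^\sim)$. Hence $\langle w_z,(\alpha,J_1,J_2)\rangle$ is unchanged both by passing to a cohomologous cocycle and by subtracting the $\rho$-component, so it suffices to reduce $(\alpha,J_1,J_2)$, modulo the image of the differential $\psi$ of Diagram~(\ref{eq:massive}) and modulo $\rho$, to the standard form $(0,i_y(J_K),0)$, i.e. the image of $J_K\in\Cl(K)/p=H^2(X,\mmu_p)$ under the map $\begin{pmatrix}0 \\ i_y \\ 0\end{pmatrix}$ of the long exact sequence. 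Once this is achieved, $\langle w_z,(0,i_y(J_K),0)\rangle=\langle z,J_K\rangle$ by the very definition of the canonical lift.

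The existence of such a reduction is a formal consequence of the splitting: after subtracting the $\rho$-component the class lies in the $\Cl(K)/(p,c_y^\sim)$-summand, which is precisely the image of $\begin{pmatrix}0 \\ i_y \\ 0\end{pmatrix}$, so the class is cohomologous to some $(0,i_y(J_K),0)$. The auxiliary data $\alpha',a_3,a_4,J_D,J_3$ and relation~(5) are nothing but the explicit coboundary in $\im\psi$ witnessing this. The solvability of the defining relation $i_x(\alpha')+\Gamma_x b_1=\Gamma_y a_1+i_y(a_3)$ for $\alpha'\in L_y^\times$ is handled exactly as in the proof of Lemma~\ref{lem:formula}: after applying $N_x$, invoking relation~(1) and using $N_x=p+(\sigma_x-1)\Gamma_x$, the obstruction lies in an intersection of norm kernels, so Lemma~\ref{lemma:int-norm-ker} (and Remark~\ref{rmk:equal} when $x=y$) applies; applying $(1-\sigma_y)$ and using $(1-\sigma_y)i_y(a_3)=0$ then gives $\alpha=(1-\sigma_y)\alpha'$.

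It remains to read off $J_K$ from the diagram chase, the bookkeeping being governed by the identity $p-N_y=(1-\sigma_y)\Gamma_y$ that converts the $(p-N_y)$-entry of $\psi$ into $\Gamma_y$-terms. First I would cancel the $L_y^\times$-slot: since $\alpha=(1-\sigma_y)\alpha'$, feeding $\alpha'$ into the first column of $\psi$ kills $\alpha$ and deposits $\divis(\alpha')$ into the $J_1$-slot. Next, relation~(5) exhibits $J_2-\rho(J_2)$ as a coboundary for the third row of $\psi$ up to the terms $i_y(J_D)$ and $(1-\sigma_y)J_3$; adding this coboundary removes $J_2$ modulo $\rho$, but its $(p-N_y)$-entry reintroduces a residual $(1-\sigma_y)\Gamma_y$-term in the $L_y^\times$-slot, which a second compensating coboundary removes at the cost of depositing a $\Gamma_y$-multiple of $\divis(a_4)$ into the $J_1$-slot. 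Substituting $\divis(a_4)$ from relation~(5) and using $\Gamma_y(1-\sigma_y)=p-N_y$, the $pJ_3$-contributions cancel, the $i_y(J_D)$-contribution becomes $\tfrac{p(p-1)}{2}i_y(J_D)$ and so is trivial modulo $p$, and there remain exactly the summands $-\Gamma_y J_2$ and $i_y(N_y(J_3))$. Collecting everything yields
\[
    i_y(J_K)=J_1-\Gamma_y J_2+i_y(N_y(J_3))+\divis(\alpha'),
\]
and hence $\langle x\cupp w_z,(b,a,J,I)\rangle=\langle z,J_K\rangle$, as asserted.

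The main obstacle is this last diagram chase, and specifically the coupling between the two slots: killing $J_2$ necessarily disturbs the $L_y^\times$-slot through the $(p-N_y)$-entry of $\psi$, and it is the divisor of the compensating coboundary that produces the term $-\Gamma_y J_2$. One must then check that the parasitic contributions---the $\rho$-component and the $p$-divisible term $\tfrac{p(p-1)}{2}i_y(J_D)$---indeed drop out when paired with $w_z$, the former because the canonical lift annihilates $N_y(\Cl(L_y))/p$ and the latter because $H^2(X,\mmu_p)$ is $p$-torsion. It is exactly here that the precise shape of relation~(5) and the identity $N_y=p+(\sigma_y-1)\Gamma_y$ are indispensable.
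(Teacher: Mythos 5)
Your proposal is correct and follows essentially the same route as the paper, whose argument for this theorem is in fact folded into the proof of Corollary \ref{cor:massey}: feed the output of Lemma \ref{lem:formula} into the chosen splitting of $H^2(X,D(P_y))$, split off the $\rho$-component, use relation (5) together with the identity $\Gamma_y(1-\sigma_y)=p-\Delta_y$ to reduce $(\alpha,J_1,J_2)$ modulo $\im\psi$ to the form $(0,i_y(J_K),0)$, and discard the $\tfrac{p(p-1)}{2}i_y(J_D)$ term as a $p$th power. The only (harmless) divergence is that you work with the canonical lift of $z$, which is also what the paper's computation implicitly does.
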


Recall that $c_x^\sim\colon H^1(X, D(\ZZ/p\ZZ))\to H^2(X, D(\ZZ/p\ZZ))$ denotes the Pontryagin dual of the cup product $c_x\colon H^1(X,\ZZ/p\ZZ)\to H^2(X,\ZZ/p\ZZ)$ with an element $x\in H^1(X,\ZZ/p\ZZ)$. The element       
    \[
        \langle x,y,z\rangle \in H^2(X,\ZZ/p\ZZ)/(H^1(X,\ZZ/p\ZZ)\cupp z+x\cupp H^1(X,\ZZ/p\ZZ))
    \]
corresponds, under Pontryagin duality, to a functional 
    \[
        \ker c_x^\sim\cap \ker c_z^\sim \to \frac{1}{p}\ZZ/\ZZ\,.    
    \]
We will describe Massey products as such functionals. 

\begin{corollary}\label{cor:massey}
    Let $x,y,z\in H^1(X, \ZZ/p\ZZ)$ be non-zero elements such that $x\cupp y = y\cupp z = 0$, and assume further that $y\cupp H^1(G,\ZZ/p\ZZ)\subseteq x\cupp H^1(G,\ZZ/p\ZZ)+H^1(G,\ZZ/p\ZZ)\cupp z$. Then, for any $(a',J)\in \ker c_x^\sim\cap \ker c_z^\sim$,
        \begin{equation}
            \langle \langle x,y,z\rangle , (a',J)\rangle = \langle z, J_K^x\rangle+\langle x, J_K^z\rangle\,,
        \end{equation}
    where $\langle z, J_K^x\rangle$ and $\langle x, J_K^z\rangle$ are as in Theorem \ref{thm:formula2.0}. 
\end{corollary}

\begin{proof}[{Proof of Theorem \ref{thm:formula2.0}}]
    We want to see what the element $(\alpha, J_1,J_2)$ from Lemma \ref{lem:formula} looks like with respect to the splitting  of $H^2(X, D(P_y))$ given in the discussion preceding Remark \ref{rmk:split}.  
    
    We may write 
        \[
            (\alpha, J_1, J_2) = (0, J_1+\divis(\alpha')-\Gamma_y\rho(J_2), J_2-\rho(J_2)) + (0,\Gamma_y \rho(J_2), \rho(J_2))    
        \]
    where $(1-\sigma_y)i_x(\alpha')=(1-\sigma_y)(\Gamma_xb_1-\Gamma_ya_1)$. Now $J_2-\rho(J_2)$ maps to zero in $N_y(\Cl(L_y))/p$ and hence we may write 
        \begin{equation}\label{eq:last}
            J_2-\rho(J_2) = \divis(a_4)+i_y(J_D)+(1-\sigma_y)J_3 
        \end{equation}
    for some element $a_4\in L_y^\times$ and some ideals $J_D\in \Div(K)$, $J_3\in \Div(L_y)$. After reducing by the image of the map $\psi$ on page 17, we get in cohomology 
        \[
            (0, J_1+\divis(\alpha')-\Gamma_y\rho(J_2), J_2-\rho(J_2)) = (0, J_1+\divis(\alpha')-\Gamma_y\rho(J_2)-\divis(\Gamma_y a_4)-pJ_3, 0)\,.  
        \]
    Applying $-\Gamma_y$ to the equality (\ref{eq:last}) we get 
        \[
            -\Gamma_y(J_2-\rho(J_2)) = -\divis(\Gamma_y a_4)+\frac{p(p-1)}{2} i_y(J_D)-(p-i_y(N_y J_3))    
        \]
    and hence 
        \[
            -\Gamma_y\rho(J_2)-\divis(\Gamma_y a_4)-pJ_3 = -\Gamma_y J_2+i_y(N_y J_3)-\frac{p(p-1)}{2} i_y(J_D)\,.
        \]
    This completes the proof. 
\end{proof}

\subsection*{The non-connected case}
Now let $p$ be an odd prime. When $x=y$ in Theorem \ref{thm:formula2.0}, the formula for the Massey product becomes much simpler, if we apply Proposition \ref{prop:half-cup}. Before we state the result we will show what the inclusions $i_x, i_y$ and the Galois elements $\sigma_x, \sigma_y$ look like in this case. We start with the 
$\ZZ/p\ZZ$-torsor $\Spec \OO_{L_x}$, and we let $\sigma_x$ be a generator for the Galois group of $\Gal(L_x/K)$.

Now let $\OO_{L_x}\ZZ/p\ZZ$ be the Hopf algebra representing the constant group scheme $\ZZ/p\ZZ$ over $\Spec \OO_{L_x}$.  Thus, $\OO_{L_x}\ZZ/p\ZZ$ is finite free on idempotent elements $e_0,\dots, e_{p-1}$ such that $e_0+\dots+e_{p-1}=1$. 
We identify $\OO_{L_x}\otimes_{\OO_K}\OO_{L_x}$ with $\OO_{L_x}\ZZ/p\ZZ$ via the isomorphism $\varphi$ given on generators by
    \[
        \begin{split}
            \OO_{L_x}\otimes_{\OO_K}\OO_{L_x} & \to \OO_{L_x}\ZZ/p\ZZ  \\
            a\otimes b & \mapsto a\Delta(b)\,
        \end{split} 
    \]
    where $\Delta(b) = \sum_{i=0}^{p-1} \sigma_x^i(b) e_i$. 
Hence we see that the inclusions
    \[
        \begin{split}
            i_y\colon \OO_{L_x} & \to \OO_{L_x}\otimes_{\OO_K}\OO_{L_x} \\
            a & \mapsto a\otimes 1
        \end{split}   
        \quad\mbox{ and }\quad
        \begin{split}
            i_x\colon \OO_{L_x} & \to \OO_{L_x}\otimes_{\OO_K}\OO_{L_x} \\
            b & \mapsto 1\otimes b
        \end{split} 
    \]
from Subsection \ref{subseq:comp} correspond to the maps 
    \[
        i_y\colon a \mapsto a=\sum_{i=0}^{p-1}ae_i \quad\mbox{ and }\quad
        i_x\colon b \mapsto \sum_{i=0}^{p-1}\sigma_x^i(b)e_i\,.
    \]
Seen as a torsor over $\OO_K$, $\OO_{L_x}\otimes_{\OO_K}\OO_{L_x}$ is a $\ZZ/p\ZZ \times \ZZ/p\ZZ$ torsor; we let $\sigma_x$ (by abuse of notation) be a generator of the first component and $\sigma_y$be a generator of the second component. 
Then it is easy to see that one can choose $\sigma_x,\sigma_y$ such that we have, once again by abuse of notation, that $\sigma_x(a\otimes 1)=\sigma_x(a)\otimes 1$ and $\sigma_y(1\otimes b)=1\otimes \sigma_x(b)$. Via the isomorphism $\varphi\colon \OO_{L_x}\otimes_{\OO_K}\OO_{L_x}\to \OO_{L_x}\ZZ/p\ZZ$ we then get 
    \[
        \begin{split}
            \sigma_x(\sum_{i=0}^{p-1}c_ie_i)=\sum_{i=0}^{p-1}\sigma_x(c_{i-1})e_i\quad\mbox{ and }\quad \sigma_y(\sum_{i=0}^{p-1}c_ie_i)= \sum_{i=0}^{p-1}c_{i+1}e_i\,
        \end{split}    
    \]
    where all the indices are taken modulo $p$ in the obvious sense.

\begin{lemma}\label{lem:non-connected}
    Let $L_x\supset K$ be an unramified extension of degree $p$ representing an element $x\in H^1(X,\ZZ/p\ZZ)$, where $p$ is an odd prime. 
    Let $z\in H^1(X,\ZZ/p\ZZ)$ be an element such that $x\cupp z=0$ and let $w_z\in H^1(X,P_x)$ be the canonical lift of $z$. Then the element $x\cupp w_z\in H^2(X,P_x)\cong H^1(X,D(P_x))^\sim$ satisfies, for any $(b,a,J,I)\in H^1(X,D(P_x))$, the equality
        \[
            \langle x\cupp w_z, (b,a,J,I)\rangle=
            \begin{cases} 
                \langle z, 2N_x(I')+2J\rangle & \mbox{ if }p = 3\,, \\
                \langle z, 2N_x(I')\rangle & \mbox{ if }p > 3\,,
            \end{cases}
        \]
    where $I'\in \Div(L_x)$ is a fractional ideal such that $I = \divis(u)-i_x(\frac{p-1}{2}J)+(1-\sigma_x)I'$ for some element $u\in L_x^\times$ such that $i_x(N_x(u))=\Gamma_xa-b$. 
\end{lemma}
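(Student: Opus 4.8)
The plan is to specialize Theorem \ref{thm:formula2.0} to the case $y=x$ and to evaluate the resulting ideal class $J_K$ explicitly in the non-connected model $\OO_{L_x}\otimes_{\OO_K}\OO_{L_x}\cong\OO_{L_x}\ZZ/p\ZZ$ set up just above. Since $w_z$ is the canonical lift, Theorem \ref{thm:formula2.0} already gives $\langle x\cupp w_z,(b,a,J,I)\rangle=\langle z,J_K\rangle$ with
\[
i_y(J_K)=J_1-\Gamma_y J_2+i_y(N_y(J_3))+\divis(\alpha'),
\]
so the entire task is to compute the right-hand side modulo $p$th powers (which $\langle z,-\rangle$ annihilates) and modulo the indeterminacy, using that in the idempotent basis $e_0,\dots,e_{p-1}$ the operators act by the explicit formulas recalled before the statement: $\sigma_y$ by the cyclic shift $c_i\mapsto c_{i+1}$, $\sigma_x$ by the shift twisted by the Galois action, $i_y$ as the diagonal, $i_x(b)=\sum_i\sigma_x^i(b)e_i$, and $N_x,N_y,\Gamma_x,\Gamma_y$ as the corresponding weighted (partial) products.

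First I would import the decomposition $I=\divis(u)+i_x(J')+(1-\sigma_x)I'$ into $\Div(L_{xy})$. The key identity here is that applying the diagram-level right inclusion to the base Galois action produces the shift $\sigma_y$ on $L_{xy}$, so that
\[
i_x\bigl((1-\sigma_x)I'\bigr)=(1-\sigma_y)\,i_x(I'),
\]
while $i_x(J')$ becomes diagonal. With this in hand, relations (1)--(4) of Theorem \ref{thm:formula2.0} become cyclic recursions on the idempotent components: for instance relation (3) reads $d_i-\sigma_x(d_{i-1})=\sigma_x^i(1-\sigma_x)I'$ on the components $d_i$ of $I_1$, solved by $\sigma_x^{-i}(d_i)=f_0+i(1-\sigma_x)I'$ with the cyclic obstruction absorbed into $\divis(b_1)$ through Hasse's norm theorem, exactly as in the proof of Lemma \ref{lem:formula}. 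Relation (4) is solved similarly, with $J\in\Div(K)$ entering linearly in the index. I would then feed these explicit solutions into $J_1,J_2,\alpha'$ and simplify: the term $J_1=N_x(I_1)-\tfrac{p(p-1)}{2}I$ contributes nothing, since both $N_x(I_1)$ and $\tfrac{p(p-1)}{2}$ are divisible by $p$; the divisorial $\Gamma_y a_1$ contributions cancel against those in $\divis(\alpha')$; and the surviving piece of $-\Gamma_y J_2$ is $\Gamma_y(1-\sigma_y)I_1=(p-N_y)I_1\equiv -N_y(I_1)$, which evaluates to a copy of $N_{L_x/K}(I')$.

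The remaining and genuinely delicate input is the class-group splitting term $i_y(N_y(J_3))$ governed by relation (5), and this is where I expect both the doubling of $N_x(I')$ and, for $p=3$, the extra term $J$ to appear. Tracing $J_2$ (hence $J_2-\rho(J_2)$ and $J_3$) through its $\divis(a_1)$- and $i_y(I_2)$-content, the operator $N_y$ applied to $J_3$ recombines a second copy of $N_{L_x/K}(I')$ with the $J$-dependent part of relation (4). Because relation (4) produces an index-linear dependence on $J$ while $\Gamma_y$ and $N_y$ carry index weights, the coefficient of $J$ should work out to $\sum_{n=1}^{p-1}n^2=\tfrac{(p-1)p(2p-1)}{6}$: for $p>3$ the numerator's factor $p$ survives reduction, so the coefficient is $0$ in $\ZZ/p\ZZ$, whereas for $p=3$ the factor $3$ cancels against the $6$ and one is left with $5\equiv 2\pmod 3$. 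This is exactly the source of the case distinction, and combining the two norm contributions yields the overall factor $2$. The main obstacle, therefore, is the careful bookkeeping of relation (5) and the projection $\rho$ in the non-connected model, together with the precise extraction of the quadratic index-coefficient that is nonzero modulo $p$ only when $p=3$.
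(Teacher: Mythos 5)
Your overall strategy is the same as the paper's: specialize the general formula to the split algebra $\OO_{L_x}\otimes_{\OO_K}\OO_{L_x}\cong\OO_{L_x}\ZZ/p\ZZ$, use the componentwise descriptions of $i_x,i_y,\sigma_x,\sigma_y$ to solve relations (1)--(4) explicitly as cyclic recursions, and extract the answer after reducing modulo the indeterminacy. (The paper organizes this through Lemma \ref{lem:formula} and a final reduction modulo $\im\psi$ rather than through the relation-(5)/$\rho$ packaging of Theorem \ref{thm:formula2.0}, but that is only a difference of bookkeeping.) Your intertwining identity $i_x((1-\sigma_x)I')=(1-\sigma_y)i_x(I')$ is correct, and your identification of the $p=3$ versus $p>3$ dichotomy with $\sum_{n=1}^{p-1}n^2\bmod p$ is exactly right: in the paper this quantity appears (up to sign) as the augmentation of the element $\theta$, which is $0\bmod p$ for $p>3$ and nonzero for $p=3$, producing the extra $J$-term.

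There is, however, one step whose justification as stated would fail and which, applied consistently, would erase the answer: the claim that $J_1=N_x(I_1)-\frac{p(p-1)}{2}I$ \emph{contributes nothing because it is divisible by $p$}. The pairing $\langle z,-\rangle$ only annihilates $p\Cl(K)$ and principal ideals \emph{of $K$}; the terms you are discarding are $p$-multiples of ideals of $L_x$ or $L_{xy}$, and for $\mathfrak a\in\Div(L_x)$ one has $p\mathfrak a=N_x(\mathfrak a)+(1-\sigma_x)\Gamma_x(\mathfrak a)$, whose Galois-invariant content $N_x(\mathfrak a)$ survives in $\Cl(K)/p\Cl(K)$. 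Indeed the asserted formula $2N_x(I')$ is itself the shadow of exactly such a ``$p$-divisible'' term: in the paper's reduction the residual $-2I\equiv -2(1-\sigma_x)I'$ in the $J_2$-slot is traded via $\psi$ for $-2pI'$ in the $J_1$-slot, whose descent to $K$ is $\mp 2N_x(I')$ up to principal ideals and elements of $p\Cl(K)$. So the $p$-divisible pieces of $J_1$ (and likewise the $pI_1$ you drop when writing $(p-N_y)I_1\equiv -N_y(I_1)$) cannot be discarded; they must be carried through the reduction modulo $\im\psi$, where they cancel against the $(1-\sigma)$-corrections generated by clearing the $\alpha$- and $J_2$-slots --- this cancellation is precisely the content of the paper's final displayed computation $-2pI-(1-\sigma)\theta I+2\Gamma I+2NI'=2NI'$. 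Until that bookkeeping is done (and you rightly flag it as the main obstacle), the factor $2$ and the vanishing of the spurious terms are not established.
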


Before we prove this lemma, we want to state an immediate consequence which is one of our main results of this section. 

\begin{theorem}\label{thm:main-simple}
    Let $L_x\supset K$ be an unramified extension of degree $p$ representing an element $x\in H^1(X,\ZZ/p\ZZ)$, where $p$ is an odd prime. Let $y\in H^1(X,\ZZ/p\ZZ)$ be an element such that $x\cupp y=0$. Then, for any $(a',J)\in \ker c_x^\sim\cap \ker c_y^\sim\subseteq H^1(X, D(\ZZ/p\ZZ))$, the equality
        \[
            \langle \langle x,x,y\rangle, (a',J)\rangle =
            \begin{cases} 
                \langle y, N_x(I')+J\rangle & \mbox{ if }p = 3\,, \\
                \langle y, N_x(I')\rangle & \mbox{ if }p > 3\,,
            \end{cases} 
        \]
    holds, where $(t, I')\in L_x^\times\oplus \Div(L_x)$ is any element satisfying the following two equalities:
    \begin{itemize}
        \item $(1-\sigma_x)^2I'+\divis(t)+i_x(J)=0$ and 
        \item $N_x(t)=a'$. 
    \end{itemize}
\end{theorem}

\begin{proof}
    This is an immediate consequence of Proposition \ref{prop:half-cup} and Lemma \ref{lem:non-connected}. The only thing to note is that we may modify the classes $(b,a,J,I)\in H^1(X,D(P_x))$ in Lemma \ref{lem:non-connected} by the image of 
        \[
            \begin{pmatrix}
                0 & -p \\
                -i_x & \sigma_x-1 \\
                \divis & 0 \\
                0 & \divis
            \end{pmatrix}\colon K^\times\oplus L_x^\times \to (L_x^\times)^2\oplus \Div(K)\oplus \Div(L_x)\,.
        \]
    Applying this map to the element 
        $
            \begin{pmatrix}
                0 & -u
            \end{pmatrix}
        $
    (additive notation), with $u$ as in Lemma \ref{lem:non-connected}, we may modify $(b,a,J,I)$ to get $(b+pu, a+(1-\sigma_x)u, J, I-\divis(u))$. Putting $t=a+(1-\sigma_x)u$, it is easy to see that $N_x(t)=a'$ and $(1-\sigma_x)^2I'+\divis(t)+i_x(J)=0$. On the other hand, given $t$ and $I'$ satisfying these two relations, the element 
        \[
            \left(\Gamma_x t, t, J, (1-\sigma_x)I'-i_x\left(\frac{p-1}{2}J\right)\right)
        \]
    defines a class in $H^1(X,D(P_x))$.  
\end{proof}

\begin{proof}[Proof of Lemma \ref{lem:non-connected}]
    The idea is to proceed exactly as in the proof of Lemma \ref{lem:formula} with variables named thereafter, except that now we may choose the elements more explicitly. Since $L_x=L_y$ we write $\sigma_x=\sigma_y=\sigma$ to denote the generator of $\Gal(L_x/K)$. Similarly, we write $N=N_x=N_y$ and $\Gamma=\Gamma_x=\Gamma_y$. However, we will still distinguish between $\sigma_x$ and $\sigma_y$ as elements of $\Gal(L_{xx}/K)$, where by $L_{xx}$, we mean $p$ copies of $L_x$:
        \[
            L_{xx} = L_x\times \dots \times L_x\,.    
        \]
    Thus, whenever we use subscripts like $N_x, \Gamma_y$ etc., we mean as morphisms $L_{xx}^\times\to L_{xx}^\times$. 

    Let us set $b_1= (-b, 0, \dots, 0)^T$. We now want to find $a_1=(x_0,\dots, x_{p-1})^T$ satisfying the equation 
        \[
            (1-\sigma_x)(1-\sigma_y)a_1 = (1-\sigma_x)i_y(a)+(1-\sigma_x)^2b_1\,,     
        \]
    that is, 
        \[
            \begin{pmatrix}
                x_0-x_1+\sigma(x_0-x_{p-1}) \\
                x_1-x_2+\sigma(x_1-x_{0}) \\
                . \\
                \vdots\\
                .\\
                x_{p-1}-x_0+\sigma(x_{p-1}-x_{p-2})
            \end{pmatrix} =
            \begin{pmatrix}
                (1-\sigma)a \\
                (1-\sigma)a \\
                .\\
                \vdots\\
                .\\
                (1-\sigma)a
            \end{pmatrix}+
            \begin{pmatrix}
                -b \\
                2\sigma(b) \\
                -\sigma^2(b) \\
                0 \\
                \vdots \\
                0
            \end{pmatrix}   \,.
        \]
    Since $a_1$ is unique only up to elements in the image of $i_x$ and $i_y$, we may hope to find a solution $a_1$ under the assumption $x_0=x_{p-1}=0$. Indeed, one finds the solution  
        \[
            a_1 = \begin{pmatrix}
                0 \\
                -(1-\sigma)a +b\\
                \left(\sum_{n=3}^{p-1}(1-\sigma^n)\right)a \\
                \vdots \\
                (1-\sigma^{p-1})a+(1-\sigma^{p-2})a \\
                (1-\sigma^{p-1})a \\ 
                0
            \end{pmatrix} =
            \begin{pmatrix}
                0 \\
                -(1-\sigma)a \\
                \left(\sum_{n=3}^{p-1}(1-\sigma^n)\right)a \\
                \vdots \\
                (1-\sigma^{p-1})a+(1-\sigma^{p-2})a \\
                (1-\sigma^{p-1})a \\ 
                0
            \end{pmatrix}+
            \begin{pmatrix}
                0 \\
                b \\
                0 \\ 
                \vdots \\
                0 \\
                0 \\
                0
            \end{pmatrix}\,.
        \]
    Let us denote the first vector of the right-hand side by $a_1'$. Then 
        \[
            a_1' = (1-\sigma)\begin{pmatrix}
                0 \\ 
                -a \\
                -(p-3)\sigma^{p-1}a-\dots-2\sigma^4a-\sigma^3a \\
                \vdots \\
                -2\sigma^{p-1}a-\sigma^{p-2}a  \\
                -\sigma^{p-1}a \\
                0
            \end{pmatrix}=: (1-\sigma)a_1''\,.
        \]
    The next step is to find an $I_1$ satisfying $\divis(b_1)-i_y(I)+(1-\sigma_y)I_1=0$. This is obtained by choosing 
        \[
            I_1 = \begin{pmatrix}
                -(p-2)I & I & 0 & -I & -2I & \dots & -(p-3)I
            \end{pmatrix}^T\,.    
        \]
    Now we want to find $J_2$ and $I_2$ satisfying the relation $\divis(a_1)+(1-\sigma_x)I_1=i_y(J_2)+i_x(I_2)$. The first and last rows of this vector are given by 
        \[
            -(p-2)I+(p-3)\sigma(I) = -(p-2)I+(p-4)\sigma(I) + \sigma(I) 
        \] 
    and 
        \[
            -(p-3)I+(p-4)\sigma(I) = -(p-2)I+(p-4)\sigma(I) + I\,,      
        \]
    which is enough information to determine $I_2$ and $J_2$.
    Hence we conclude that we may choose $J_2= -(p-2)I+(p-4)\sigma(I)$ and $I_2= \sigma(I)$. 
    As in Lemma \ref{lem:formula} we may choose   
        \[
            \begin{split}
                J_1 & = N_y(I_1)-\frac{p(p-1)}{2}I \\
                    & = I - \left(\sum_{n=1}^{p-2} n\right)I -\frac{p(p-1)}{2}I \\
                    & = I - \frac{(p-1)(p-2)+p(p-1)}{2}I \\
                    & = p(2-p)I \,.
            \end{split}       
        \]
    The element $\alpha$ may be obtained, for instance, as the second row (all rows are equal) of the vector 
        \[
            \begin{split}
                i_y(\alpha) & = (1-\sigma_x)(\Gamma_x a_1-\Gamma_y b_1) \\
                            & = (p-N_x)a_1+(1-\sigma_x)(\sigma_y+2\sigma_y^2+\dots+(p-1)\sigma_y^{p-1})b_1\,.
            \end{split}    
        \]
    The $b$'s on the second row will cancel and we see that $\alpha$ is the second row of $(p-N_x)a_1'=(1-\sigma)(p-N_x)a_1''$. We want to reduce modulo the image of the matrix $\psi$ on page 17. Thus we want to find an element $\alpha'$ such that $(1-\sigma)\alpha'=\alpha$. Hence $\alpha'$ may be chosen as the second row of $(p-N_x)a_1''$, which is equal to
        \[
            (-(p-1)+(p-3)\sigma^2+2(p-4)\sigma^3+3(p-5)\sigma^4\dots+(p-3)\sigma^{p-2})a\,.    
        \]
    The coefficients of $\theta := -(p-1)+(p-3)\sigma^2+2(p-4)\sigma^3+3(p-5)\sigma^4\dots+(p-3)\sigma^{p-2}$ sums to $0$ mod $p$, if $p>3$ and to $1$ mod $p$ if $p=3$. This means that if $p>3$, then $\divis(\theta a)+(1-\sigma)\theta I=0$ modulo $p$\,.

    We have 
        \[
            0=\Gamma_x(\divis(a)+i_x(J)+(1-\sigma_x)I)=i_x(\divis(b')-\frac{p(p-1)}{2}J-N_x(I))\,,
        \] 
    where $i_x(b')=\Gamma_xa-b$, hence $\divis(b')-\frac{p(p-1)}{2}J-N_x(I)=0$. But then $b'$ is locally a norm and, by Hasse's norm theorem, there exists a $u\in L_x$ such that $b'=N_x(u)$. But then $N_x(\divis(u)-i_x(\frac{p-1}{2}J)-I)=0$ and, by Hilbert's theorem 90 for ideals, we get that there is an $I'\in \Div(L_x)$ such that $I=\divis(u)-i_x(\frac{p-1}{2}J)+(1-\sigma_x)I'$.   
    Reducing modulo the image of $\psi$ on page 17, for $p>3$, we get that 
        \[
            \begin{split}
                \begin{pmatrix}
                    \alpha \\
                    J_1 \\
                    J_2
                \end{pmatrix} 
                & =
                \begin{pmatrix}
                    (1-\sigma)\theta a  \\
                    -p(p-2)I\\
                    -(p-2)I+(p-4)\sigma(I)
                \end{pmatrix} \\
                & =
                \begin{pmatrix}
                    0 \\
                    -p(p-2)I-(1-\sigma)\theta I \\
                    -(p-2)I+(p-4)\sigma(I)
                \end{pmatrix} \\
                & =
                \begin{pmatrix}
                    0 \\
                    -p(p-2)I+p(p-4)I-(1-\sigma)\theta I \\
                    -2I
                \end{pmatrix}  \\
                & =
                \begin{pmatrix}
                    0 \\
                    -2pI-(1-\sigma)\theta I + 2\Gamma I +2N I'\\
                    0
                \end{pmatrix}\,.
            \end{split}
        \]
    We have 
        \[
            -(1-\sigma)\theta = (p-1)-(p-1)\sigma-(p-3)\sigma^2-(p-5)\sigma^3-\dots-(p-2(p-4)-3)\sigma^{p-2}+(p-3)\sigma^{p-1}    
        \]
    and hence 
        \[
            -2pI-(1-\sigma)\theta I+2\Gamma I +2N I' = -(p+1)N_x(I)+2N I'\,.    
        \]
    But $N(I)$ represents the trivial class in $\Cl(K)/p\Cl(K)$ and hence $\langle z, -(p+1)N(I)+2N I'\rangle=\langle z, 2N I'\rangle$. The case $p=3$ follows by the same argument but with an extra factor $J$ showing up. This is left to the reader. 
\end{proof}

\begin{remark} \label{rmk:vanish}
Let us note that when $x=y$ in the above formula, and $p=3$, then $\langle x,x,x \rangle$ coincides, up to indeterminacy, with the Bockstein homomorphism $H^1(X,\ZZ/p\ZZ) \to H^2(X,\ZZ/p\ZZ)$ induced from the short exact sequence $0 \to \ZZ/p\ZZ \to \ZZ/p^2\ZZ \to \ZZ/p\ZZ \to 0$. This agrees, up to sign, with \cite[Theorem 14]{Kraines-Higher}, as is expected. Note also that $\langle x,x,x \rangle$ vanishes up to indeterminacy if $p \geq 5$.
\end{remark}

\begin{remark} \label{rmk:sharifi}
Let us note that the formula in Theorem \ref{thm:main-simple} is highly reminiscent of the formula given in Theorem 5.4 of \cite{Sharifi-Massey}. 
Let us sketch the relation between Theorem \ref{thm:main-simple} and \cite[Theorem 5.4]{Sharifi-Massey} when $K$ contains all $p$th roots of unity and we have removed a set of places $S$ containing all places over the prime $p$. Indeed, with notation as in \cite[Theorem 5.4]{Sharifi-Massey}, choose $m=n=1$ and $k=2$. Then Sharifi's element $a$ corresponds to our element $x$ and Sharifi's element $b$ corresponds to our element $(a',J)$. 

We then put $y=t$ where $y$ is in Sharifi's notation. 
The requirement $D^{(k-1)}y\in \Omega^{\times p^m}$ is equivalent to the requirement that the ideal $\divis(D^{(k-1)}y)\OO_{L,S}$ is a $p$th power. To see that this is satisfied, we apply $D^1=\Gamma_x-N_x$ to the equality  
    \[
        \divis(y)+J\OO_{L,S}+(1-\sigma)^2I'=0\,.        
    \]
In Sharifi's notation, we put $\mathfrak{B}=-J\OO_{L,S}$ and $\mathfrak{N}=-I'$. 
\end{remark}


\subsection{A necessary and sufficient condition for the vanishing of Massey products} \label{subsec:necessary}

Let $p$ be an odd prime and $K$ an imaginary quadratic field with class group of $p$-rank 2. We will now show that all 3-fold Massey products vanish if and only if the $p$-rank of the class groups of $\Cl(L_x)$ and $\Cl(L_y)$ are at least 4.   Before we state and prove this result, we will set up some notation and prove a few preliminary lemmas. Note that the methods used here are inspired by those in \cite{Sharifi-Bockstein}.

Let $G$ be the Galois group of $L_x$ over $K$ and write $Y=\Spec \OO_{L_x}$. Let $\pi\colon Y\to X$ be the structure map and let $I\subseteq \pi_*\pi^*\ZZ/p\ZZ$ be the kernel of the norm map so that, by abuse of notation, $I$ corresponds to the augmentation ideal $I\subseteq \ZZ/p\ZZ[G]$ under the equivalence between locally constant sheaves split by $\pi$ and $G$-modules. 
We have, for every natural number $n$, a $G$-module $V_n=\ZZ/p\ZZ[G]/I^{n}$ and an exact sequence of $G$-modules 
    \[
        0\to I^{n}/I^{n+1}\to V_{n+1}\to V_n \to 0 \,.
    \]
For every $G_K^{ur}$-module $M$, let us use the abbreviation $H^i(M)=H^i(G_K^{ur}, M)$ in this section. 
We let $\partial_n\colon H^1(V_n)\to H^2(I^{n}/I^{n+1})\cong H^2(\ZZ/p\ZZ)$ denote the connecting homomorphism induced from the above short exact sequence.

For every $n$ such that $1\leq n \leq p$, we choose $(T-1)^{n-1}, (T-1)^{n-2}, \dots, 1$ as a basis for $V_n$ over $\FF_p$, where $T$ corresponds to the generator of $G$ given by $x$. Hence the action is via 
    \begin{equation}\label{eq:one-matrix}
        T\mapsto 
        \begin{pmatrix}
            1 & 1 & 0 & 0 & \dots & 0 \\
            0 & 1 & 1 & 0 & \dots & 0 \\
            0 & 0 & 1 & 1 & \dots & 0 \\
            \vdots & & \ddots & \ddots & & \vdots \\
            0 & \dots & & & 1 & 1 \\
            0 & \dots & & & 0 & 1
        \end{pmatrix}    \,.
    \end{equation}
The maps $I^{n}/I^{n+1}\to V_{n+1}$ and $V_{n+1}\to V_n$ then correspond to the inclusion of the first coordinate and the projection onto the last $n$ coordinates respectively. 

A more general result than the following lemma is proved in \cite{Sharifi-Bockstein}, but we give another proof for the sake of completeness. 

\begin{lemma}
    The image of $\partial_2$ is generated by the elements $\langle x,x,x\rangle$ and $\langle x,x,y\rangle$. 
\end{lemma}
\begin{proof}
    Let 
        \[
            w_z=
            \begin{pmatrix}
                t_z \\
                z
            \end{pmatrix} \colon G_K^{ur} \to V_2
        \]
    be a crossed homomorphism representing a lift of $z$, where $z$ is an element of $H^1(\ZZ/p\ZZ)$. Then we have for every $g,g'\in G_K^{ur}$
        \[
            \partial_2[w_z](g,g') = x(g)t_z(g')+t_x(g)z(g')    
        \]
    where $t_x\colon G_K^{ur}\to \ZZ/p\ZZ$ is the function obtained as index $(1,3)$ of the matrix $G_K^{ur}\to V_3$ induced by the homomorphism (\ref{eq:one-matrix}). This is indeed the Massey product $\langle x,x,z \rangle$.  If $z$ is non-trivial, clearly $\langle x,x,z \rangle$ lies in the module generated by $\langle x,x,x\rangle$ and $\langle x,x,y\rangle$. On the other hand, if $z=0$, $t_z(g)$ is a cocycle, so that $\partial_2[w_z]$ is zero since all cup products vanish by assumption.
\end{proof}

\begin{lemma}\label{lem:containment}
    For every $n\geq 2$, we have $\im \partial_{n-1} \subseteq \im \partial_{n}$. 
\end{lemma}

\begin{proof}
    For every $n\geq 2$, we have a commutative diagram of $G_K^{ur}$-modules
        \[
            \begin{tikzcd}
                0 \ar{r}&  I^{n-1}/I^n \ar{d}\ar{r} & V_{n}\ar{r}\ar{d} & V_{n-1}\ar{r}\ar{d} & 0 \\
                0 \ar{r}& I^{n}/I^{n+1} \ar{r} & V_{n+1} \ar{r} & V_n\ar{r} & 0 
            \end{tikzcd}    
        \] 
    where the vertical arrows are multiplication by $T-1$. This gives isomorphisms of (trivial) $G_K^{ur}$-modules $I^{n-1}/I^{n}\cong I^{n}/I^{n+1}\cong \ZZ/p\ZZ$ and hence we get a commutative diagram
        \[
            \begin{tikzcd}
               H^1(V_{n-1}) \ar{r}{\partial_{n-1}} \ar{d} & H^2(I^{n-1}/I^{n}) \ar[equals]{d} \\
               H^1(V_n) \ar{r}{\partial_{n}} & H^2(I^{n}/I^{n+1}) \,.
            \end{tikzcd}    
        \]  
     This proves that $\partial_{n-1}\colon H^1(V_{n-1})\to H^2(\ZZ/p\ZZ)$ factors through $\partial_n\colon H^1(V_{n})\to H^2(\ZZ/p\ZZ)$ and proves the claim. 
\end{proof}
    
    \begin{remark}\label{rk:ker-vs-indet}
        For every $n\geq 2$, the sequence 
            \[
                0 \to V_{n-1}\xrightarrow{T-1} V_n \xrightarrow{\varepsilon} \ZZ/p\ZZ\to 0    
            \] 
        is exact. In particular, we have an exact sequence 
            \[
                \dots \to H^1(V_{n-1})\to H^1(V_n)\to H^1(\ZZ/p\ZZ)\to \dots\,.    
            \]
    \end{remark}

    \begin{lemma}\label{lem:gen-for-conn}
        Let $2\leq n<p$ and let $w_x\in H^1(V_n)$ be an element mapping to the generator $x\in H^1(\ZZ/p\ZZ)$ if such a $w_x$ exists and otherwise put $w_x=0$. Similarly, let $w_y\in H^1(V_n)$ be an element mapping to the generator $y\in H^1(\ZZ/p\ZZ)$ if such $w_y$ exists and otherwise put $w_y=0$. Then $\im \partial_n$ is generated by $\partial_n(w_x)$ and $\partial_n(w_y)$ together with elements of $\im \partial_{n-1}$.  
    \end{lemma}
    
    \begin{proof}
        This follows immediately from Remark \ref{rk:ker-vs-indet} since $H^1(\ZZ/p\ZZ)$ is generated by $x$ and $y$. 
    \end{proof}

\begin{lemma}\label{lem:4-vanish}
    Let $p>3$ and assume that $3\leq n\leq p-1$ is odd. Then $\im \partial_n=\im \partial_{n-1}$.  
\end{lemma}

\begin{proof}
    The group $\Gal(K/\QQ)\cong \ZZ/2\ZZ$ acts by conjugation on $G_K^{ur}$. It also acts by conjugation on $G_x$, which induces an action on the modules $\ZZ/p\ZZ[G_x]$, $V_n$, and $I^{n}/I^{n+1}$, where the generator of $\Gal(K/\QQ)$ acts by sending $T$ to $T^{-1}$. This induces an action on cohomology groups and we get splittings into eigenspaces
        \[
            H^i(V_n) = H^i(V_n)^{+} \oplus H^i(V_n)^{-}   
        \]
    since $2$ is invertible modulo $p$. 

    The modules $I^{n}/I^{n+1}$ are isomorphic to $\ZZ/p\ZZ$ as $G_K^{ur}$-modules but are twisted versions of $\ZZ/p\ZZ$ in the sense that $1\in \ZZ/2\ZZ$ acts as $(-1)^n$ on $I^n/I^{n+1}$. Hence $H^2(I^{n}/I^{n+1})$ is fixed by the $\Gal(K/\QQ)$-action if $n$ is odd whereas the fixed points of $H^2(\ZZ/p\ZZ)$ are zero. The connecting homomorphism 
        \[
            \partial_n\colon H^1(V_n) \to H^2(I^n/I^{n+1})    
        \]        
    is $\Gal(K/\QQ)$-equivariant and any $\partial_n[f]$ is fixed by the Galois action. This means that the image of $\partial_n$ is generated by elements $\partial_n[f]$, where $[f]\in H^1(V_n)^{+}$. But these are all sent to zero under the map $H^1(V_n)\to H^1(V_1)=H^1(\ZZ/p\ZZ)^{-}$, which means $\partial_n[f]=\partial_{n-1}[f']$ for some $[f']\in H^1(V_{n-1})$, by Remark \ref{rk:ker-vs-indet}. \qedhere   
\end{proof}

\begin{lemma}\label{lem:xx-zero}
    If $2\leq n \leq p-1$, then there exists a lift $w_x\in H^1(V_n)$ of the generator $x\in H^1(\ZZ/p\ZZ)$. In particular, $\partial_n[w_x]=0$ for $2\leq n < p-1$. 
\end{lemma}

\begin{proof}
    This follows from the fact that the map $\ZZ/p\ZZ\to GL_{n+1}(\ZZ/p\ZZ)$ sending 1 to the matrix of Diagram (\ref{eq:one-matrix}) defines a group homomorphism if $n\leq p-1$. Let $M\colon G_K^{ur}\to GL_{n+1}(\ZZ/p\ZZ)$ be the induced homomorphism. Then we get functions $M_{i,j}\colon G_K^{ur}\to \ZZ/p\ZZ$ for every index $(i,j)$ and we define 
        \[
            w_x:= \begin{pmatrix}
                M_{1,2} \\
                M_{1,3} \\
                \vdots \\
                M_{1,n+1}
            \end{pmatrix} \colon G_K^{ur} \to V_{n}\,.   
        \]
    The fact that $M$ is a homomorphism implies that $w_x$ is a crossed homomorphism and since $M_{1,2}=x$, this proves the lemma. 
\end{proof}

\begin{lemma}\label{lem:non-surj}
    If $1\leq n < p-1$, then the connecting homomorphism $\partial_n$ is not surjective.
\end{lemma}

\begin{proof}
    We use induction and assume that $\partial_{n-1}$ is not surjective since this is the case for $n=2$. 

    If $\partial_{n-1}$ is zero, then Lemma \ref{lem:gen-for-conn} and Lemma \ref{lem:xx-zero} imply that the image of $\partial_n$ is generated by a single element $\partial_n[w_y]$, where $w_y\in H^1(V_n)$ is either a lift of the generator $y\in H^1(\ZZ/p\ZZ)$ or zero. Hence we get that $\partial_n$ is not surjective. 

    If $\partial_{n-1}$ has image of rank 1, then there is no lift $w_y\in H^1(V_n)$ of $y$. Hence the image of $\partial_n$ is generated by $\partial_n[w_x]$, which by Lemma \ref{lem:xx-zero}, is contained in $\im \partial_{n-1}$, which has rank 1. Thus $\im \partial_n$ has rank 1 which completes the proof. 
\end{proof}

\begin{theorem}\label{thm:crit-for-massey-vanish}
    Let $p$ be an odd prime and $K$ an imaginary quadratic field with class group of $p$-rank 2. Put $X=\Spec \OO_K$ and choose elements $x$ and $y$ forming a basis for $H^1(X, \ZZ/p\ZZ)$ with corresponding field extensions $L_x$ and $L_y$. Then the Massey products $\langle x,x,x\rangle$ and $\langle x,x,y\rangle$ both vanish if and only if $\Cl(L_x)$ has $p$-rank at least 4.  
\end{theorem}

\begin{proof}
First note that $\rk H^1(V_2)=3$ since all cup products vanish. Recall also that this means that there is no indeterminacy for 3-fold Massey products. 

Suppose that not all relevant 3-fold Massey products vanish. Then we must have $\rk(\ker \partial_2)\leq 2$. We will use induction to prove that $\rk(H^1(V_n))\leq 3$ for every $n\geq 2$. Now let $2\leq n\leq p-1$ and assume inductively that $\rk(\ker \partial_n)\leq 2$ and that $\rk(\im \partial_n) \geq 1$, the base being when $n=2$. We have an exact sequence 
    \[
        0 \to H^0(V_{n+1})\cong \FF_p\to H^1(I^n/I^{n+1})\to H^1(V_{n+1})\to \ker \partial_n \to 0    
    \]
and hence $\rk H^1(V_{n+1})\leq 3$. By Lemma \ref{lem:containment} we have that $\im \partial_n\subseteq \im \partial_{n+1}$ and hence $\rk(\im \partial_{n+1})\geq 1$. This implies that $\rk(\ker \partial_{n+1})\leq 2$, which completes the induction step and shows that 
    \[
        \rk H^1(V_p)=\rk \Cl(L_x)/p \leq 3\,.  
    \]

Conversely, suppose that all 3-fold Massey products vanish, which implies that $\rk H^1(V_3)=4$. So we are done if $p=3$. 

Now assume that $p>3$. Then Lemma \ref{lem:4-vanish} implies that $\partial_3=0$ and hence $\rk H^1(V_4) = 5$. 
We again use induction, this time to prove that $\rk(H^1(V_n))\geq 5$ for every $4\leq n<p$ and then that $\rk(H^1(V_p))\geq 4$. Let $4\leq n < p$ be an integer such that $\rk H^1(V_n)\geq 5$. 
First assume that $n<p-1$. By Lemma \ref{lem:non-surj}, the map $\partial_n$ is not surjective and hence $\rk H^1(V_{n+1})\geq 5$. 
If $n=p-1$, then $\rk H^1(V_{n+1})\geq 4$ since $\rk H^2(I^{p-1}/I^p)=2$. This shows that $\rk_{\FF_p} \Cl(L_x)=\rk H^1(V_p)\geq 4$, which completes the proof.  
\end{proof}

\begin{remark}
    Here is another way of looking at the fact that the vanishing of $\langle x,x,x\rangle$ and $\langle x,x,y\rangle$ implies that the $p$-rank of $\Cl(L_x)$ is at least 4. For simplicity we will assume that $p>3$. If $\langle x,x,y\rangle$ vanishes then, by Lemma \ref{lem:4-vanish}, we have $\im \partial_3=0$ and there exists a representation $G_K^{ur}\to GL_5(\FF_p)$ of the form
    \begin{equation}\label{eq:rep-1}
        \begin{pmatrix}
            1 & x & t_{x,1} & t_{x,2}  & t_{y,3} \\
             & 1 & x & t_{x,1} & t_{y,2}\\
             &  & 1 & x & t_{y,1} \\
             &  &   & 1 & y \\
             &  &   &   & 1 
        \end{pmatrix}    \,.
    \end{equation}
    Now let $G_{\QQ,S}$ be the Galois group of the maximal extension of $\QQ$ which is unramified outside of the set of places $S$ which ramify in $K$. Then we have a split exact sequence 
        \[
            0\to G_{K,S}\to G_{\QQ, S}\to G_{K/\QQ}\to 0    
        \] 
    and $G_{K/\QQ}$ acts on $G_{K,S}$ by conjugation. This action induces an action on the $\FF_p$-vector space $H^1(G_{K,S}, \ZZ/p\ZZ)$ via a character $\chi\colon G_{K/\QQ}\to (\ZZ/p\ZZ)^\times$ and by abuse of notation we also write $\chi$ for the induced character $G_{\QQ,S}\to (\ZZ/p\ZZ)^\times$. We define the $G_{\QQ, S}$-module $\ZZ/p\ZZ(\chi)$ to be $\ZZ/p\ZZ$ with $G_{\QQ, S}$ acting via $\chi$ and we may extend any homomorphism $z\colon G_{K,S}\to \ZZ/p\ZZ$ to a crossed homomorphism $\tilde{z}\colon G_{S,\QQ}\cong G_{K, S}\rtimes G_{K/\QQ}\to \ZZ/p\ZZ(\chi)$ by putting $\tilde{z}(a,\sigma)=z(a)$. By abuse of notation we also write $x,y, t_{y,1}$, etc., for the restrictions to $G_{K,S}$. 

    Since $p$ is odd, we get for every $n\in \NN$ that $H^n(G_{K/\QQ}, \ZZ/p\ZZ(\chi^n))=0$ and we have an isomorphism 
        \[
            H^n(G_{\QQ,S}, \ZZ/p\ZZ(\chi^n))\cong H^n(G_{K,S}, \ZZ/p\ZZ)\,.    
        \]  
    Hence the vanishing of $\langle x,x,y\rangle \in H^1(G_K^{ur}, \ZZ/p\ZZ)$ implies the vanishing of 
        \[
            \langle \tilde{x},\tilde{x},\tilde{y}\rangle \in H^1(G_{\QQ,S}, \ZZ/p\ZZ(\chi^3))\,.
        \]
    This means that the vanishing of $\langle x,x,y\rangle$ enables us to choose a representation $G_{\QQ,S}\to GL_5(\FF_p)$ of the form
    \begin{equation}\label{eq:rep-2}
        \begin{pmatrix}
            1   & \chi\tilde{x} & t_{\tilde{x},1} & \chi t_{\tilde{x},2}  & t_{\tilde{y},3} \\
                & \chi          & \tilde{x} & \chi t_{\tilde{x},1} & t_{\tilde{y},2}\\
                &               & 1         & \chi \tilde{x}            & t_{\tilde{y},1} \\
                &               &           & \chi                      & \tilde{y} \\
                &               &           &                           & 1 
        \end{pmatrix}    \,,
    \end{equation}  
    where $t_{\tilde{z}, i}(a,\sigma)=t_{z, i}(a)$ for $z=x$ or $z=y$ and $1\leq i\leq 3$. Note here that $\chi^2=1$ and that $x, t_{x,1}$, and $t_{x,2}$ are all polynomials in $x$ since they are obtained by iterating the matrix in (\ref{eq:one-matrix}). When restricting the representation to $G_{L_x, S}$, it takes the form 
    \begin{equation}\label{eq:rep-3}
        \begin{pmatrix}
            1   & 0 & 0 & 0  & t_{y,3} \\
                & 1          & 0 & 0 & t_{y,2}\\
                &               & 1         & 0           & t_{y,1} \\
                &               &           & 1                      & y \\
                &               &           &                           & 1 
        \end{pmatrix}    \,.
    \end{equation}  
    If we restrict $y, t_{y,1}, t_{y,2}, t_{y,3}$ to $G_{L_x, S}$, then these become homomorphisms and hence represent elements in $H^1(G_{L_x,S}, \ZZ/p\ZZ)$. We claim that $y, t_{y,1}, t_{y,2}, t_{y,3}$ are linearly independent as elements of $H^1(G_{L_x, S}, \ZZ/p\ZZ)$ which, in particular, implies that the representation restricts to a surjection $G_{L_x, S}\to (\ZZ/p\ZZ)^4$. Since the classes $y, t_{y,1}, t_{y,2}, t_{y,3}$ lift to $G_{L_x}^{ur}$, this also says that $\Cl(L_x)$ has $p$-rank at least 4. 
    
    The classes in $H^1(G_K^{ur}, V_p)$ given by 
        \[
            e_1=\begin{pmatrix}
                y \\
                0 \\
                0 \\
                0 \\
                \vdots \\
                0
            \end{pmatrix}, 
            e_2=\begin{pmatrix}
                t_{y,1} \\
                y \\
                0 \\
                0 \\
                \vdots \\
                0
            \end{pmatrix},  
            e_3=\begin{pmatrix}
                t_{y,2} \\
                t_{y,1} \\
                y \\
                0 \\
                \vdots \\
                0
            \end{pmatrix},   
            e_4=\begin{pmatrix}
                t_{y,3} \\
                t_{y,2} \\
                t_{y,1} \\
                y \\
                \vdots \\
                0
            \end{pmatrix} \in H^1(V_p)
        \]
    are linearly independent. This can be seen as follows: $e_1$ is the restriction of $y\in H^1(\ZZ/p\ZZ)$, which is not zero since this would imply that $y$ and $x$ are not linearly independent. This follows from the fact that, for every $n\geq 2$, we have an exact sequence 
        \[
            \dots \to H^0(V_{n-1})\to H^1(I^{n-1}/I^n)\to H^1(V_n)\to H^1(V_{n-1})\to \dots    
        \]
    and the quotient $H^1(I^{n-1}/I^n)/H^0(V_{n-1})$ is 1-dimensional and generated by the image of $y$.  
    The vector $e_1$ maps to zero in $H^1(V_{p-1})$ so it is enough to show that the images of $e_2, e_3, e_4$ are linearly independent in $H^1(V_{p-1})$. But $e_2$ maps to zero in $H^1(V_{p-2})$ and is non-zero in $H^1(V_{p-1})$ since there, it is the restriction of $y$, so it is enough to show that the images of $e_3$ and $e_4$ are linearly independent in $H^1(V_{p-3})$. Repeating the above argument, we get that $e_3$ and $e_4$ are linearly independent and hence $e_1, e_2, e_3$, and $e_4$ are linearly independent. The canonical isomorphism $H^1(G_K^{ur}, V_p)\cong H^1(G_{L_x}, \ZZ/p\ZZ)$, induced by evaluation at 1: 
        \[
            V_p\cong \Map(G,\ZZ/p\ZZ)\xrightarrow{\ev_1} \ZZ/p\ZZ\,,  
        \] 
    sends $e_1, e_2, e_3$, and $e_4$ to $y, t_{y,1}-y, t_{y,2}-t_{y,1}+y$, and $t_{y,3}-t_{y,2}+t_{y,1}-y$ respectively, and hence we see that $y, t_{y,1}, t_{y,2}$, and $t_{y,3}$ are linearly independent.  
\end{remark}

\section{Class field towers} \label{sec:classfield}
In this section we use the formulas computed in Section \ref{sec:massey} and apply them to the question of the infinitude of class field towers of imaginary quadratic fields. For an odd prime $p$ we will give the first known examples of imaginary quadratic fields $K$ with infinite $p$-class field tower and $p$-rank two, and we will also provide counterexamples to McLeman's $(3,3)$-conjecture. 

Given a number field $K$ there is, by class field theory, a maximal unramified abelian $p$-extension $H_K\supseteq K$, which is called the Hilbert $p$-class field of $K$; the Hilbert $p$-class field is always of finite degree over $K$. By iterating this construction, we obtain a tower of number fields 
    \[
        K = H_K^0\subseteq H_K^1 := H_K\subseteq H_K^2 := H_{H_K} \subseteq   \dots    
    \]
We call this tower the Hilbert $p$-class field tower, and we define $l_p(K)$, the length of the Hilbert $p$-class field tower of $K$, to be the minimum $i$ for which $H_K^i = H_K^{i+1}$ if such an $i$ exists, and let $l_p(K) = \infty$ if there is no such $i$. Then the $p$-Hilbert class field tower problem asks whether this tower stabilizes or not, i.e., whether $l_p(K) < \infty$. Denote by $\Gal(K^{ur,p}/K)$ the maximal pro-$p$-quotient of $\Gal(K^{ur}/K),$ where the latter group is the Galois group of the maximal unramified extension of $K$. Then since any pro-$p$-group is solvable, the $p$-Hilbert class field tower stabilizes if and only if $\Gal(K^{ur,p}/K)$ is a finite group. In 1925 Furtwängler asked whether this tower stabilizes for every number field $K$. 
This was an open problem for almost 40 years until Golod--Shafarevich found a counterexample by using an argument via pro-$p$-group theory \cite{Golod--Shafarevich}. We present a modern version of the group theoretic theorem they used to answer Furtwängler's question. 

\begin{theorem}[{\cite[Theorem 7.20]{KochGalois}}] \label{thm:golod}
    Let $G$ be a pro-$p$-group and let 
        \[
            1 \to R \to F \to G \to 1    
        \]
    be a pro-$p$-presentation of $G$. Suppose that $F$ has $d$ generators as a pro-p-group and choose a generating set $\Gen(R)\subseteq F$ of $R$ as a normal subgroup of $F$, and let $r_k$ be the number of elements of $\Gen(R)$ that have depth $k$ with respect to the Zassenhaus filtration on $F$. Then, if $G$ is finite, we have the inequality 
        \[
            1-dt+\sum_{i=1}^{\infty} r_kt^k > 0   
        \] 
    for all $t\in (0,1)$. 
\end{theorem}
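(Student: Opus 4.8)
The plan is to deduce the inequality from the purely algebraic Golod--Shafarevich estimate applied to the completed group algebra $\Omega = \FF_p[[G]]$. First I would pass from the presentation $1 \to R \to F \to G \to 1$ to completed group algebras: writing $A = \FF_p[[F]] \cong \FF_p\langle\langle X_1,\dots,X_d\rangle\rangle$ (noncommutative power series, $X_i = \sigma_i - 1$ for a minimal generating set $\sigma_i$ of $F$), the surjection $F \to G$ induces a surjection $A \twoheadrightarrow \Omega$ whose kernel $\mathfrak a$ is the closed two-sided ideal generated by the elements $f_j := \rho_j - 1$, $\rho_j \in \Gen(R)$. The crucial compatibility is that the Zassenhaus filtration on $F$ is induced by the powers of the augmentation ideal $I_A = (X_1,\dots,X_d)$, so that $\rho_j$ having depth $k_j$ means precisely $f_j \in I_A^{k_j} \setminus I_A^{k_j+1}$, i.e.\ $f_j$ has a nonzero leading term in degree $k_j$ of $\mathrm{gr}\,A = \FF_p\langle X_1,\dots,X_d\rangle$. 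Setting $c_n = \dim_{\FF_p} I^n/I^{n+1}$ for the augmentation ideal $I = I_\Omega$, I record that \emph{finiteness of $G$} enters here: then $\Omega = \FF_p[G]$ is finite-dimensional, $I$ is nilpotent, and the Poincaré series $P(t) = \sum_{n\geq 0} c_n t^n$ is a polynomial with nonnegative coefficients, with $P(0) = c_0 = 1$ and $P(1) = |G|$.

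The heart of the argument is the coefficientwise estimate
\[
    c_n \;\geq\; d\,c_{n-1} - \sum_{k} r_k\, c_{n-k} \qquad (n \geq 1),
\]
equivalently $P(t)\bigl(1 - dt + \sum_k r_k t^k\bigr) \geq 1$ for $t \geq 0$, since the $n$-th coefficient of the left-hand product is exactly $c_n - d\,c_{n-1} + \sum_k r_k c_{n-k}$ (and equals $c_0 = 1$ at $n=0$). Granting this, the theorem follows at once: for $t \in (0,1)$ all $c_n \geq 0$ give $P(t) \geq c_0 = 1 > 0$, and since $P(t)$ is finite we may divide to obtain $1 - dt + \sum_k r_k t^k \geq 1/P(t) > 0$, which is the assertion. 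I would stress that finiteness is used \emph{only} to guarantee that $P$ is a genuine, everywhere-positive polynomial; for infinite $G$ the series need not converge and the inequality can genuinely fail.

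To establish the coefficientwise estimate I would argue with syzygies in the associated graded. The multiplication map $\psi\colon (I^{n-1}/I^n)^{\oplus d} \to I^n/I^{n+1}$, $(a_i) \mapsto \sum_i x_i a_i$, is surjective (as $I^n = I\cdot I^{n-1}$), so $c_n = d\,c_{n-1} - \dim(\ker\psi)_n$, and it remains to bound $\dim(\ker\psi)_n \leq \sum_k r_k c_{n-k}$. In the free algebra $\mathrm{gr}\,A$ there are no nontrivial syzygies, because every element of the augmentation ideal factors \emph{uniquely} as $\sum_i X_i w_i$; consequently every syzygy in $\mathrm{gr}\,\Omega$ must come from the defining ideal, and writing each relation as $f_j = \sum_i X_i g_{ji}$ with $g_{ji} \in I_A^{k_j-1}$ exhibits a relation map $\Theta\colon \bigoplus_j \bigl(I^{\,n-k_j}/I^{\,n-k_j+1}\bigr) \to (I^{n-1}/I^n)^{\oplus d}$ whose image contains $(\ker\psi)_n$. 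Since the source of $\Theta$ has dimension $\sum_k r_k c_{n-k}$, this yields the bound.

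The step I expect to be the main obstacle is precisely the containment $(\ker\psi)_n \subseteq \mathrm{im}\,\Theta$: one must show that \emph{every} graded syzygy is accounted for by the leading terms of the chosen relations. The difficulty is bookkeeping the two-sided (bimodule) structure of $\mathfrak a = \sum_j A f_j A$ against the one-sided ``factor out the first letter'' normal form, and checking that left multiplication by arbitrary elements of $A$ produces no syzygies uncaptured by the $r_k$ relation generators in each degree. This is the genuinely nontrivial content of the Golod--Shafarevich lemma; once it is in place, the remaining dimension counts and the final division by $P(t)$ are routine. As this is the theorem of \cite{Koch-galois}, I would ultimately cite that reference for the detailed verification of this containment.
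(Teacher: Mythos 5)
The paper gives no proof of this theorem---it is quoted directly from Koch's book---and your sketch is precisely the classical Golod--Shafarevich--Vinberg argument found in that reference: pass to $\mathrm{gr}\,\FF_p[G]$, establish the coefficientwise inequality $c_n \ge d\,c_{n-1} - \sum_k r_k c_{n-k}$ from a presentation of the augmentation ideal as a left module over the completed group algebra, and divide by the positive polynomial $P(t)$, with finiteness of $G$ entering exactly where you say it does. Your outline is correct, and you rightly isolate (and defer to the cited source) the one delicate point, namely that the syzygy bound survives the passage to associated graded even though $\mathrm{gr}$ is not exact, so that what one actually proves is the dimension inequality rather than the literal containment $(\ker\psi)_n \subseteq \mathrm{im}\,\Theta$.
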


Here the \emph{Zassenhaus filtration}
    \[
        F=F_1\supseteq F_2\supseteq F_3\supseteq \dots    
    \]
is defined by
    \[
        F_n=\{f \in F: 1-f\in I^n\}    
    \]
where $I\subseteq \FF_p[F]$ is the augmentation ideal and the \emph{depth} of a generator $\rho_i\in F$ of $R\subseteq F$ is the greatest integer $n$ such that $\rho_i\in F_n$. By choosing a minimal presentation of $G$, one can assume that the sum above starts at 2 since \cite[Proposition 2.3]{Mcleman} says that a minimal presentation yields $r_1=0$. Then one can derive that if $G$ is finite, then $r > \frac{d^2}{4}$, which is the classical Golod--Shafarevich inequality. With this inequality one can show that, for example,  $\mathbb{Q}(\sqrt{-3 \cdot 5 \cdot 7 \cdot 11 \cdot 13 \cdot 17})$ has an infinite $2$-class field tower, thus answering Furtwängler's question in the negative.

We want to turn the power series $1-dt+\sum_{i=2}^{\infty} r_kt^k$ into an invariant of the pro-$p$-group $G$ by making the power series as small as possible, viewed as a function $(0,1)\to (0,\infty)$. This is done as follows:  
let 
    \[
        0\to R\to F\to G \to 0    
    \]
be a minimal presentation of the pro-$p$-group $G$. Define recursively $R_1=\emptyset$ and 
    \[
        R_n=R_{n-1}\cup \{\rho_{n,1},\dots\rho_{n, r_n}\}    
    \]
where $\{\rho_{n,1},\dots\rho_{n, r_n}\}$ is a minimal generating set for $RF_{n}/F_{n}$ as a normal subgroup of $F/F_{n}$. Then define the \emph{Zassenhaus polynomial} to be
    \[
        Z_G(t) = 1-dt+\sum_{i=2}^{\infty} r_kt^k\,.    
    \]

Note that almost all $r_k$ will vanish, and thus $Z_G(t)$ is a polynomial. Given a minimal presentation $0\to R\to F\to G \to 0 $ we have that $F$ and $R$ can be generated by 
    \[
            d(G) = \rk_{\FF_p} H^1(G, \ZZ/p\ZZ) \mbox{ and } 
            r(G) = \rk_{\FF_p} H^2(G,\ZZ/p\ZZ)
    \]
elements respectively \cite[4.2-4.3]{SerreGalois}. From now on, $G$ will always denote $\Gal(K^{ur,p}/K)$.  Shafarevich proved in 1963 that if $p$ is odd and $K$ is an imaginary quadratic field, then we have the equality
    \[
        r(G) = d(G)\,.     
    \]
To us, this result of Shafarevich will be important since it implies that we have isomorphisms
    \[
        \begin{split}
            H^1(G, \ZZ/p\ZZ) & \cong H^1(\Spec \OO_K, \ZZ/p\ZZ)\cong H^2(\Spec \OO_K, \mmu_p)^\sim\cong (\Cl(K)/p\Cl(K))^\sim \,, \\
            H^2(G, \ZZ/p\ZZ) & \cong H^2(\Spec \OO_K, \ZZ/p\ZZ)\cong H^1(\Spec \OO_K, \mmu_p)^\sim\cong (Z^1/B^1)^\sim\,,
        \end{split}
    \]
where 
    \[
        \begin{split}
            Z^1 & = \{(a,I)\in K^\times\oplus \Div(K): \divis(a)+pI=0\} \,, \\
            B^1 & = \{(b^{-p},\divis(b))\in K^\times\oplus \Div(K): b\in K^\times\}\,.
        \end{split}    
    \]
Indeed, we always have that $H^1(G,\ZZ/p\ZZ) \cong H^1(\Spec \OO_K,\ZZ/p\ZZ)$, and to prove that $H^2(G,\ZZ/p\ZZ) \cong H^2(\Spec \OO_K,\ZZ/p\ZZ)$, it suffices to show that 
    \[ 
        \rk_{\FF_p} H^1(\Spec \OO_K, \ZZ/p\ZZ) =  \rk_{\FF_p} H^2(\Spec \OO_K, \ZZ/p\ZZ).
    \]
To see this, note that there is a map $Z^1/B^1\to \Cl(K)[p]$ sending $(a,I)$ to the class represented by $I$, and this map is an isomorphism. Indeed, the kernel consists of elements of the form $(a, \divis(b))$, where $a=ub^{-p}$ for a unit $u$. But the units of $K$ are only $\pm 1$ and $-1=(-1)^p$ since $p$ is odd. Hence $(a, \divis(b))=((ub)^{-p}, \divis(ub))=0$ in $Z^1/B^1$ which proves that $Z^1/B^1\to \Cl(K)[p]$ is an isomorphism. Since $\rk_{\FF_p} \Cl K/p \Cl K = \rk_{\FF_p} \Cl K[p]$, the claimed equality of ranks follows.

If we now restrict the $p$-class field tower problem to $p$ odd and $K$ quadratic imaginary, we then have the following result which may be found in \cite{Mcleman}:
    
\begin{proposition}
    Let $p$ be an odd prime and $K$ an imaginary quadratic field. Denote by $l_p(K)$ the length of the $p$-class field tower of $K$. Then 
        \[
            l_p(K)=
            \begin{cases}
            0 & \mbox{ if }\ \rk_{\FF_p}(\Cl(K)/p\Cl(K))=0\,,\\
            1 & \mbox{ if }\ \rk_{\FF_p}(\Cl(K)/p\Cl(K))=1\,,\\
            ? & \mbox{ if }\ \rk_{\FF_p}(\Cl(K)/p\Cl(K))=2\,,\\
            \infty & \mbox{ if }\ \rk_{\FF_p}(\Cl(K)/p\Cl(K))\geq 3\,.
            \end{cases}
        \]
\end{proposition}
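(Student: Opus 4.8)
The plan is to recast the statement in terms of the pro-$p$ group $G:=\Gal(K^{ur,p}/K)$ and then to argue case by case on $d:=\rk_{\FF_p}(\Cl(K)/p\Cl(K))$. First I would set up the standard dictionary between the tower and $G$: writing $G_i:=\Gal(K^{ur,p}/H_K^i)$, class field theory identifies $G_i/\overline{[G_i,G_i]}$ with the $p$-part of $\Cl(H_K^i)$, so $G_{i+1}=\overline{[G_i,G_i]}$ and the $p$-class field tower is the derived series of $G$. As each derived quotient $\Cl(H_K^i)_{(p)}$ is finite, the tower terminates exactly when $G$ is finite; more precisely $l_p(K)=0 \iff G=1$, and $l_p(K)=1 \iff G$ is nontrivial and abelian, while $l_p(K)=\infty \iff G$ is infinite. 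I would also record $d(G)=\rk_{\FF_p}H^1(G,\FF_p)=d$ together with Shafarevich's equality $r(G)=d(G)=d$, both recalled above.

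The two extreme small cases are then immediate. If $d=0$ then $\Hom(G,\FF_p)=H^1(G,\FF_p)=0$, so $G$ has trivial Frattini quotient and the Burnside basis theorem forces $G=1$; hence $l_p(K)=0$. If $d=1$ then $G$ is topologically generated by a single element, hence procyclic and abelian; since $G=G^{ab}\cong\Cl(K)_{(p)}$ is finite and nonzero, $G$ is a nontrivial finite cyclic group and $l_p(K)=1$.

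The substance is the case $d\geq 3$, which I would settle with the Golod--Shafarevich criterion (Theorem \ref{thm:golod}) in the contrapositive form: it suffices to exhibit $t\in(0,1)$ with $Z_G(t)\leq 0$. The arithmetic input I would invoke is that for $K$ imaginary quadratic and $p$ odd the cup product $H^1(\Spec\OO_K,\FF_p)\otimes H^1(\Spec\OO_K,\FF_p)\to H^2(\Spec\OO_K,\FF_p)$ vanishes identically, equivalently every relation in a minimal presentation of $G$ has depth $\geq 3$. Granting this, a minimal presentation has $r_1=0$ and $\sum_{k\geq 3}r_k=r(G)=d$, so for $t\in(0,1)$ one has $Z_G(t)=1-dt+\sum_{k\geq 3}r_kt^k\leq 1-dt+dt^3$. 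Evaluating at $t=1/\sqrt{3}$ gives $Z_G(1/\sqrt{3})\leq 1-\tfrac{2d}{3\sqrt{3}}$, which is strictly negative as soon as $d\geq 3$; this contradicts finiteness, so $G$ is infinite and $l_p(K)=\infty$.

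The step I expect to be the genuine obstacle is precisely this input that the relations have depth $\geq 3$, and it is indispensable only for $d=3$: there the classical inequality ``$G$ finite $\Rightarrow r>d^2/4$'' reads $3>9/4$, which is true and produces no contradiction, so the depth refinement cannot be bypassed. (For $d\geq 4$ nothing beyond $r_1=0$ is needed, since then $r=d\leq d^2/4$ already gives $Z_G(1/2)\leq 1-d/4\leq 0$.) I would therefore obtain the cup-product vanishing either by citing \cite{Mcleman}, or from graded-commutativity of the cup product for odd $p$ together with the arithmetic-duality computation of \cite{Ahlqvist--Carlson-cup}. No argument is required for $d=2$, which the proposition records as open.
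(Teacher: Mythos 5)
Your proof is correct and follows essentially the same route as the paper: the cases $d=0,1$ are handled by the same elementary group-theoretic observations, and the case $d\geq 3$ rests on exactly the two inputs the paper cites, namely the Koch--Venkov depth bound (relations in depth $\geq 3$) combined with Theorem \ref{thm:golod}. The only difference is that you carry out the Golod--Shafarevich evaluation explicitly (at $t=1/\sqrt{3}$, correctly) where the paper simply cites the theorem, and you rightly flag that the depth-$\geq 3$ refinement is indispensable precisely when $d=3$.
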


\begin{proof}
    The case $\rk_{\FF_p}(\Cl(K)/p\Cl(K))=0$ is obvious and the case $\rk_{\FF_p}(\Cl(K)/p\Cl(K))\geq 3$ follows from a theorem of Koch--Venkov \cite{Koch--Venkov} and Theorem \ref{thm:golod}. If $\rk_{\FF_p}(\Cl(K)/p\Cl(K))=1$ then $G$ can be generated by one element and hence $G$ is abelian. But this can only happen if $G$ is the Galois group of $H_K$ over $K$, i.e., if $l_p(K)=1$. 
\end{proof}

Hence we see that the only unresolved case is when $\rk_{\FF_p}(\Cl(K)/p\Cl(K))=2$, in which case the Zassenhaus polynomial takes the form  
    \[
        Z_G(t) = t^i+t^j-2t+1\,     
    \]
    for some integers $i,j >1$. In fact, by Koch--Venkov \cite{Koch--Venkov}, we have that $i,j \geq 3$.
McLeman refers to the pair $(i,j)$ as the \emph{Zassenhaus type} of $G$. If $\Gal(K^{ur,p}/K)$ should have any possibility to be finite, then, as explained at the end of \cite[Section 2]{Mcleman}, the Zassenhaus type can only be $(3,3), (3,5)$, or $(3,7)$, since otherwise, the Zassenhaus polynomial will have a zero in the interval $(0,1)$ and then $G$ would be infinite by the theorem of Golod--Shafarevich. McLeman conjectured in \cite{Mcleman} that the Zassenhaus type should govern whether the $p$-class tower is infinite or not. More precisely, he made the following conjecture, which we have found counterexamples to.

\begin{conj}[{\cite[Conjecture 2.9]{Mcleman}}]\label{conj:3,3}
Let $p > 3$ be a prime number and suppose that $K$ is an imaginary quadratic field with class group of $p$-rank two. Then the
$p$-class field tower of $K$ is finite if and only if it is of Zassenhaus type $(3,3)$. 
\end{conj}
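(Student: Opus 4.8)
The statement is a \emph{conjecture} which, as the introduction already signals, turns out to be false; accordingly the honest task is not to prove it but to disprove it, and the plan is to refute the ``only if'' direction by exhibiting an imaginary quadratic field $K$ of $p$-rank two whose Zassenhaus type is \emph{not} $(3,3)$ yet whose $p$-class field tower is finite. The conjecture is a biconditional, and its two halves have entirely different characters: the ``if'' direction is not what a counterexample touches, so I would attack ``finite $\Rightarrow$ type $(3,3)$''. Recall that for $p$ odd and $K$ imaginary quadratic of $p$-rank two the cup product on $H^1(X,\ZZ/p\ZZ)$ vanishes, so both defining relations of $G=\Gal(K^{ur,p}/K)$ lie in depth $\geq 3$ and every triple Massey product is defined; by McLeman's reformulation, type $(3,3)$ is equivalent to $\langle x,x,y\rangle$ and $\langle y,y,x\rangle$ being nonzero and linearly independent.

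The first half of the plan is to compute the Zassenhaus type across a large family of candidate fields. For a basis $x,y$ of $H^1(X,\ZZ/p\ZZ)$ I would use the explicit formula of Theorem \ref{thm:main-simple} (together with the cup-product formula of the companion paper) to evaluate $\langle x,x,y\rangle$ and $\langle y,y,x\rangle$ as functionals on $H^1(X,\mmu_p)\cong \Cl(K)[p]$. Each value $\langle y, N_x(I')\rangle$, or its $p=3$ variant, reduces to a class-group computation in the unramified degree-$p$ extension $L_x$, which is exactly what the PARI implementation is built to carry out. I would then scan fields $K$ by discriminant, retaining those for which $\langle x,x,y\rangle$ and $\langle y,y,x\rangle$ fail to be nonzero and linearly independent, i.e.\ those not of type $(3,3)$. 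By the Golod--Shafarevich constraint recorded after Theorem \ref{thm:golod}, the only such fields that can possibly have a finite tower are of type $(3,5)$ or $(3,7)$, so these are the candidates worth examining further.

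The second, and genuinely harder, half is to certify \emph{finiteness} of the tower for such a candidate, and this is the main obstacle. The tools developed here detect the depth of the relations, hence the type, but say nothing directly about finiteness, and the inequality of Theorem \ref{thm:golod} is only a \emph{sufficient} criterion for infinitude: it can never prove a tower finite. I would therefore import an independent identification of $G$ as an explicit finite $p$-group, for instance through the $p$-group generation and descendant-tree methods for two-generator two-relator pro-$p$ groups, matching the field's invariants (the abelianizations of the unramified degree-$p$ and degree-$p^2$ subextensions, equivalently the transfer data) against a known finite group in that tree. A field of computed type $(5,3)$ or $(7,3)$ that is matched to a finite $G$ then violates ``finite $\Rightarrow$ type $(3,3)$'' and refutes the conjecture.

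One delicate point pervades the type computation: the formula of Theorem \ref{thm:main-simple} is only well-defined modulo the indeterminacy $x\cupp H^1(X,\ZZ/p\ZZ)+H^1(X,\ZZ/p\ZZ)\cupp y$, so to read off the type correctly I must test linear (in)dependence of the two Massey products \emph{modulo this indeterminacy}, not of bare cocycles. Once a single pair $(p,D)$ survives both the type computation (not $(3,3)$) and the finiteness certification, the counterexample is complete. I expect the discriminant scan to produce candidates already at modest size, with the finiteness certification — not the Massey computation — being the step that requires outside input beyond what is proved in this paper.
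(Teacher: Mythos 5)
Your proposal matches the paper's argument: the paper disproves the conjecture by running its PARI implementation of Theorem \ref{thm:main-simple} to show that one column of the Zassenhaus matrix vanishes for certain fields (e.g.\ $(p,D)=(5,-90868)$ and $(7,-159592)$), so they are of type $(5,3)$ or $(7,3)$ rather than $(3,3)$, and then imports finiteness of the corresponding $p$-class field towers from the external group-theoretic results of Mayer, exactly the two-step strategy you describe. Your remarks on the indeterminacy and on finiteness requiring outside input are both consistent with what the paper actually does.
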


We will disprove the following equivalent version of Conjecture \ref{conj:3,3}. For a proof that these two conjectures are equivalent, see \cite[Proposition 3.2]{Mcleman}.

\begin{conj}\label{prop:inv} 
    Let $K$ be an imaginary quadratic field with class group of $p$-rank two, where $p>3$ is a prime. Let $G = \Gal(K^{ur,p}/K)$ of $K$ and choose a basis $x,y$ for $H^1(G,\ZZ/p\ZZ)$. Then $G$ is finite if and only if the matrix 
        \[
            \begin{pmatrix}
                \tr_{\rho_1}\langle x,x,y\rangle & \tr_{\rho_1}\langle y,y,x\rangle \\
                \tr_{\rho_2}\langle x,x,y\rangle & \tr_{\rho_2}\langle y,y,x\rangle
            \end{pmatrix}
        \] 
    is invertible.     
\end{conj}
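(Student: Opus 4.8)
The statement of Conjecture~\ref{prop:inv} is one I would expect to \emph{refute} rather than prove, so the natural plan is to reduce each side of the asserted biconditional to data that can be computed or verified directly, and then to exhibit a field on which the two sides disagree. By the equivalence recorded just before the statement (McLeman's reformulation), the trace matrix is invertible precisely when $G=\Gal(K^{ur,p}/K)$ has Zassenhaus type $(3,3)$, equivalently when the two Massey products $\langle x,x,y\rangle$ and $\langle y,y,x\rangle$ are nonzero and linearly independent in the two-dimensional space $H^2(G,\ZZ/p\ZZ)$. Thus the conjecture asserts that $G$ is finite if and only if its type is $(3,3)$, and to break it one produces a $K$ whose type is $(3,5)$ or $(3,7)$ -- so that the matrix is singular -- but whose $p$-class field tower nevertheless terminates.

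First I would compute the trace matrix explicitly. Using Theorem~\ref{thm:main-simple}, each of $\langle x,x,y\rangle$ and $\langle y,y,x\rangle$ is realized as a functional on $H^1(X,\mmu_p)\cong\Cl(K)[p]$, and evaluating against a basis produces the entries $\tr_{\rho_i}$. The rank of the resulting $2\times2$ matrix over $\FF_p$ reads off the Zassenhaus type: rank two is type $(3,3)$; rank one is type $(3,5)$ or $(3,7)$, the only remaining types compatible with finiteness by Koch--Venkov and Golod--Shafarevich (Theorem~\ref{thm:golod}); and rank zero means both products vanish, which for $p>3$ forces an infinite tower by McLeman. Since Theorem~\ref{thm:main-simple} determines the full Massey product set, this computation is entirely algorithmic, can be run over a large range of discriminants, and isolates candidate fields of rank exactly one.

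Having located candidates of type $(3,5)$ or $(3,7)$, the remaining and genuinely hard task is to certify that the tower of such a candidate is \emph{finite}, independently of the conjectural criterion. The tools at hand (Golod--Shafarevich and the Massey-product vanishing criterion) naturally yield conditions forcing the tower to be \emph{infinite}; they do not directly produce finiteness. To prove finiteness I would instead pin down the finite group $G$ itself: either compute successive Hilbert $p$-class fields $H_K^1\subseteq H_K^2\subseteq\cdots$ and verify that the $p$-class group of some $H_K^i$ is trivial, so that $H_K^{i+1}=H_K^i$; or match the presentation of $G$ (two generators and two relations of the computed depths) against the classification of finite $p$-groups of $p$-rank two with the given type. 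Either route requires computing class groups of fields of degree a power of $p$ over $K$, which is the principal obstacle and the step I expect to dominate the work.

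Exhibiting even a single such $K$ -- for instance the discriminant $-90868$ at $p=5$ or $-159592$ at $p=7$ flagged in Theorem~\ref{thm:false} -- suffices to contradict the implication ``$G$ finite $\Rightarrow$ matrix invertible,'' and hence to disprove Conjecture~\ref{prop:inv} together with its equivalent form Conjecture~\ref{conj:3,3}. I expect the converse implication, that type $(3,3)$ forces finiteness, to remain untouched by this argument and genuinely open, since no currently available method forces finiteness for type $(3,3)$; the refutation is confined to the other direction.
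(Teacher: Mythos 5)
Your reading is correct: the statement is a conjecture that the paper refutes rather than proves, and your refutation strategy coincides with the paper's. The authors likewise compute the Zassenhaus matrix via the Massey-product formula of Theorem~\ref{thm:main-simple} (implemented in PARI), find that one column vanishes for the discriminants $-90868$ at $p=5$ and $-159592$ at $p=7$, and then import the finiteness of those towers from external results (Mayer's work, cited as establishing the finite tower group) exactly as you anticipate in your final step.
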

    
\begin{remark}
    The maps $\tr_{\rho_i}\colon H^2(G, \ZZ/p\ZZ)\to \ZZ/p\ZZ$ form a basis for $H^2(G, \ZZ/p\ZZ)^\sim$ and since $H^2(G, \ZZ/p\ZZ)\cong H^1(\Spec \OO_K,\mmu_p)^\sim$, we see that $\tr_{\rho_1}$ and $\tr_{\rho_2}$ are just given by evaluation on some basis $e_1,e_2\in H^1(\Spec \OO_K, \mmu_p)\cong \Cl(K)[p]$. Hence we may write 
    \[
        \begin{pmatrix}
            \tr_{\rho_1}\langle x,x,y\rangle & \tr_{\rho_1}\langle y,y,x\rangle \\
            \tr_{\rho_2}\langle x,x,y\rangle & \tr_{\rho_2}\langle y,y,x\rangle
        \end{pmatrix} = 
        \begin{pmatrix}
            \langle\langle x,x,y\rangle, e_1\rangle & \langle\langle y,y,x\rangle, e_1\rangle \\
            \langle\langle x,x,y\rangle, e_2\rangle & \langle\langle y,y,x\rangle, e_2\rangle
        \end{pmatrix}.
    \] 
\end{remark}

Furthermore, we will give examples of imaginary quadratic fields $K$ with infinite $p$-class field tower when the $p$-rank of $\Cl K$ is two, and $p$ is an odd prime, using the following theorem of McLeman.

\begin{theorem}[{\cite[Theorem 3.1]{Mcleman}}]\label{thm:vanish}
    Let $K$ be an imaginary quadratic field with class group of $p$-rank two, and let $G = \Gal(K^{ur,p}/K)$. Choose a basis $x,y$ for $H^1(G, \ZZ/p\ZZ)$ and suppose that $p > 3$. Then $K$ has infinite $p$-class field tower if the Massey products $\langle x,x,y\rangle$ and $\langle y,y,x\rangle$ both vanish. For $p=3$, we need in addition that the Massey products $\langle x,x,x\rangle$ and $\langle y,y,y\rangle$ both vanish. 
\end{theorem}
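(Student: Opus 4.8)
The plan is to deduce infinitude from the Golod--Shafarevich criterion (Theorem \ref{thm:golod}) in its contrapositive form: writing $Z_G(t)=t^i+t^j-2t+1$ for the Zassenhaus polynomial of $G$, it suffices to show that the Zassenhaus type $(i,j)$ does \emph{not} lie in $\{(3,3),(3,5),(3,7)\}$, since by the classification recalled above (from \cite[Section 2]{Mcleman}, resting on Koch--Venkov \cite{Koch--Venkov}) these are the only types for which $G$ can be finite. Every type in that list has a relation of depth exactly $3$, so the whole theorem reduces to a single assertion: \emph{the vanishing of the listed Massey products forces both defining relations of $G$ to lie in depth $\geq 4$ for the Zassenhaus filtration}. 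I would fix a minimal presentation $1\to R\to F\to G\to 1$ with $F$ free pro-$p$ on two generators $a,b$ dual to $x,y$, recall that $H^2(G,\ZZ/p\ZZ)$ is canonically dual to the $2$-dimensional relation module $R/R^p[R,F]$, and recall that under the Magnus embedding of $\FF_p[[F]]$ into the completed free associative algebra on $X_x,X_y$ the depth of a relation equals the lowest total degree in its expansion. Since all cup products vanish for $K$ imaginary quadratic with $p$ odd, every relation already lies in degree $\geq 3$.

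The content needed is the dictionary, going back to Dwyer \cite{DwyerMassey}, between the triple Massey product operation on $H^1(G,\ZZ/p\ZZ)$ and the degree-$3$ coefficients of the Magnus expansions of the relations: for each ordered triple, the pairing of $\langle \chi,\chi',\chi''\rangle\in H^2(G,\ZZ/p\ZZ)$ with a relation $\rho$ returns, up to a uniform sign, the coefficient of the corresponding degree-$3$ monomial in $\rho$. Because $\log\rho$ is a Lie series, its degree-$3$ part lies in the degree-$3$ component of the free restricted Lie algebra on $a,b$. For $p>3$ this component is spanned by $[[a,b],a]$ and $[[a,b],b]$; expanding $[[a,b],a]=2X_xX_yX_x-X_x^2X_y-X_yX_x^2$ and the analogue for the second basis element, one reads off $\langle y,x,x\rangle=\langle x,x,y\rangle$, $\langle x,y,x\rangle=-2\langle x,x,y\rangle$ and symmetrically for $y$, while $\langle x,x,x\rangle=\langle y,y,y\rangle=0$ automatically (cf.\ Remark \ref{rmk:vanish}). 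For $p=3$ the component acquires the two restricted powers $a^{[3]},b^{[3]}$, which map to $X_x^3,X_y^3$ and are detected precisely by $\langle x,x,x\rangle$ and $\langle y,y,y\rangle$.

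Assembling this, the two classes $\langle x,x,y\rangle,\langle y,y,x\rangle\in H^2(G,\ZZ/p\ZZ)\cong (R/R^p[R,F])^\sim$ record the coefficients of $[[a,b],a]$ and $[[a,b],b]$ in the two relations, so their rank equals the rank of the degree-$3$ Lie coefficient matrix; this recovers McLeman's equivalence that rank $2$ corresponds to type $(3,3)$. The hypothesis that both products vanish is the rank-$0$ case, and by the proportionalities above this kills \emph{all} degree-$3$ Lie coefficients of both relations when $p>3$ (the factor $-2$ is a unit for every $p\geq 3$); when $p=3$ the extra vanishing of $\langle x,x,x\rangle,\langle y,y,y\rangle$ removes the two remaining restricted-power coefficients. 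In either case both relators lie in $F_4$, so $r_2=r_3=0$ and the type $(i,j)$ has $i,j\geq 4$, hence is not in $\{(3,3),(3,5),(3,7)\}$, and $G$ is infinite. The main obstacle is making the middle paragraph rigorous: pinning down the Massey-product-to-Magnus-coefficient identification with its correct sign and, above all, controlling the passage to the free \emph{restricted} Lie algebra in characteristic $3$, where the restricted powers contribute exactly the two additional obstructions that account for the extra hypotheses when $p=3$. Once these are in place, the reduction to the cited type classification is purely formal.
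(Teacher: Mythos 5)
Your proposal is correct and follows the same route the paper relies on: the paper does not prove this statement itself but cites it from McLeman, and your reconstruction---the reduction via Golod--Shafarevich and the Zassenhaus type classification, plus the Dwyer/Magnus dictionary showing that vanishing of the listed Massey products forces both relators into $F_4$ (with the restricted cubes $a^{[3]},b^{[3]}$ accounting for the extra hypotheses at $p=3$)---is exactly the argument behind \cite[Theorem 3.1]{Mcleman} and the summary $\rk \ZM(G)=0\Rightarrow l_p(K)=\infty$ recorded in the paper. The one point to keep in view is that the final step ``type $\notin\{(3,3),(3,5),(3,7)\}$ implies $G$ infinite'' genuinely requires the Koch--Venkov parity constraint on the depths and not just positivity of the Zassenhaus polynomial (for instance $2t^4-2t+1$ has no zero in $(0,1)$), but you cite that classification as an input in the same way the paper does.
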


Let us refer to the matrix 
    \[
        \begin{pmatrix}
            \langle\langle x,x,y\rangle, e_1\rangle & \langle\langle y,y,x\rangle, e_1\rangle \\
            \langle\langle x,x,y\rangle, e_2\rangle & \langle\langle y,y,x\rangle, e_2\rangle
        \end{pmatrix}   
    \]
as the \emph{Zassenhaus matrix} $ZM(G)$ and denote the Zassenhaus type by $ZT(G)$. Then to summarize, we may conclude that
    \[
        \begin{split}
            \rk \ZM(G) = 2 & \Leftrightarrow \ZT(G) = (3,3) \,, \\
            \rk \ZM(G) = 1 & \Rightarrow \ZT(G) = (3,5), (3,7), \mbox{ or } l_p(K)=\infty \,, \\
            \rk \ZM(G) = 0 & \Rightarrow l_p(K)=\infty \,.
        \end{split}    
    \]
We have used the C library PARI \cite{PARI2} to compute 3-fold Massey products in order to analyze the Zassenhaus types of imaginary quadratic fields $K$ with $p$-rank two. For the primes $3,5,7$, we have computed Massey products for imaginary quadratic fields with class group of $p$-rank two, except for $p=3$ where we add the stronger assumption that $9$ divides both factors of the $3$-class group to make sure that the Massey products $\langle x,x,x\rangle$ and $\langle y,y,y\rangle$ both vanish (see Remark \ref{rmk:vanish}). For $p=3$ there are $5163$ such fields and for $p=5$ there are $11070$ such fields.  
The program we use, together with computational results, can be found in the GitHub repository \url{https://github.com/ericahlqvist/massey}. 

\begin{theorem}\label{thm:false}
 The $(3,3)$-conjecture is false. For instance, for $p$ a prime and $D$ a discriminant, the pairs  
    \[
        (p,D) = (5, -90868), (7, -159592)  
    \] 
are counterexamples to the $(3,3)$-conjecture: for each pair in the above list, the associated quadratic imaginary field with discriminant $D$ has Zassenhaus type $(5,3)$ or $(7,3)$, but the $p$-class field tower is finite.
\end{theorem}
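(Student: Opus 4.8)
The plan is to read off the Zassenhaus type of each field from the explicit Massey-product formula of Theorem~\ref{thm:main-simple}, and to combine this with an independent finiteness statement for the tower. Fix a pair $(p,D)$ from the list, set $K=\QQ(\sqrt{D})$ and $G=\Gal(K^{ur,p}/K)$. First I would confirm by a class-group computation that $\Cl(K)$ has $p$-rank two; since $p>3$ here, this places us in the setting analyzed above, where $G$ has a balanced two-generator presentation, the cup product $H^1(X,\ZZ/p\ZZ)\otimes H^1(X,\ZZ/p\ZZ)\to H^2(X,\ZZ/p\ZZ)$ vanishes identically, and consequently the triple Massey products $\langle x,x,y\rangle$ and $\langle y,y,x\rangle$ are all defined.

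The next step is to assemble the Zassenhaus matrix $\ZM(G)$. Choosing a basis $e_1,e_2$ of $H^1(X,\mmu_p)\cong\Cl(K)[p]$, I would apply Theorem~\ref{thm:main-simple} to evaluate the four entries $\langle\langle x,x,y\rangle,e_i\rangle$ and $\langle\langle y,y,x\rangle,e_i\rangle$. Concretely, for each basis class one produces a representative ideal $I$ together with an element $I'\in\Div(L_x)$ satisfying $I=\divis(u)+i_x(J')+(1-\sigma_x)I'$, and then computes $\langle y,N_x(I')\rangle$; this is precisely the quantity the PARI program returns. The aim of this computation is to verify that $\rk\ZM(G)=1$, i.e. that the two triple Massey products, viewed as functionals on $\Cl(K)[p]$, are linearly dependent but not both zero. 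By the equivalences recorded above this already yields $\ZT(G)\neq(3,3)$, so the ``only if'' direction of Conjecture~\ref{conj:3,3} can only survive if the tower turns out to be infinite.

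Ruling out $l_p(K)=\infty$ is where the real work lies, and I expect it to be the main obstacle, since it is not governed by the three-fold Massey products: a rank-one Zassenhaus matrix is a priori consistent with either a finite tower of type $(3,5)$ or $(3,7)$ or an infinite one. I would establish finiteness by pinning down $G$ as an explicit finite $p$-group of the appropriate Zassenhaus type --- for instance by matching its presentation, its order, and its Artin transfer (capitulation) data against the known classification of finite two-generator $p$-groups --- or, failing a clean identification, by directly computing the initial layers $K\subseteq H_K^1\subseteq H_K^2\subseteq\cdots$ of the Hilbert $p$-class field tower and checking that the relevant $p$-class group eventually becomes trivial. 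Once finiteness is secured, the Golod--Shafarevich inequality of Theorem~\ref{thm:golod} restricts $\ZT(G)$ to $\{(3,3),(3,5),(3,7)\}$; combined with $\rk\ZM(G)=1$ this forces type $(3,5)$ or $(3,7)$, which in the ordering of the statement is $(5,3)$ or $(7,3)$.

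Putting the two halves together produces the desired contradiction: each listed field has a finite $p$-class field tower and yet a Zassenhaus type strictly different from $(3,3)$, directly violating Conjecture~\ref{conj:3,3}. The Massey-product input, while computationally heavy, is entirely determined by the closed formula of Theorem~\ref{thm:main-simple}; the subtle and essential ingredient is the finiteness of the tower established in the previous paragraph.
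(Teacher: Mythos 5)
Your proposal is correct in substance and its first half is exactly what the paper does: the PARI implementation of Theorem~\ref{thm:main-simple} is used to evaluate the entries of the Zassenhaus matrix, and the computation shows that one entire column vanishes (i.e.\ one of $\langle x,x,y\rangle$, $\langle y,y,x\rangle$ is zero as a functional on $\Cl(K)[p]$), so $\rk\ZM(G)\le 1$ and the type is not $(3,3)$. Where you diverge is on the finiteness of the tower, which you rightly flag as the essential and non-Massey-theoretic ingredient: you propose to establish it yourself, either by pinning down $G$ as an explicit finite $p$-group via its presentation and capitulation data or by computing the first layers of the tower directly. The paper does not do this work; it simply cites the existing literature (Mayer's results on finite $p$-class field towers, \cite{Mayer-Finite}) for the finiteness of these particular fields. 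Your plan is viable --- identifying $G$ through transfer/capitulation invariants is essentially how such finiteness results are obtained in that literature --- but it would amount to reproving known theorems, whereas the paper's contribution is only the Massey-product computation showing the type is not $(3,3)$. One further small point: given finiteness, rank $0$ is excluded by Theorem~\ref{thm:vanish}, so the observed vanishing of a single column automatically forces $\rk\ZM(G)=1$ and hence type $(3,5)$ or $(3,7)$ by Golod--Shafarevich, exactly as you conclude.
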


\begin{proof}
This is shown by using the program that can be found in the above GitHub repository, together with results from \cite{Mayer-Finite}. The computer program shows that one column of the Zassenhaus matrix vanishes, and thus, the Zassenhaus type is not of type $(3,3)$.
\end{proof}

\begin{theorem}\label{thm:infinite}
There exist odd prime numbers $p$ and imaginary quadratic fields of discriminant $D$ with class group of $p$-rank two and infinite $p$-class field tower. For instance, for prime $p$ and discriminant $D$, the pairs
    \[
        \begin{split}
            (p,D) = &\  (3, -3826859), (3, -8187139), (3, -11394591), (3, -13014563)\,, \\
                    &\  (5, -2724783), (5, -4190583), (5, -6741407), (5, -6965663) \,
        \end{split}
    \]
give examples of such fields: for each pair in the above list, the associated quadratic imaginary field with discriminant $D$ has an infinite $p$-class field tower.
\end{theorem}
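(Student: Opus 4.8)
The plan is to verify, for each pair $(p,D)$ in the list, the hypotheses of McLeman's Theorem \ref{thm:vanish}, and then invoke that theorem to conclude that the $p$-class field tower is infinite. Thus, writing $K$ for the imaginary quadratic field of discriminant $D$, setting $X = \Spec \OO_K$ and $G = \Gal(K^{ur,p}/K)$, and fixing a basis $x,y$ of $H^1(G,\ZZ/p\ZZ) \cong H^1(X,\ZZ/p\ZZ)$, it suffices to show that the Massey products $\langle x,x,y\rangle$ and $\langle y,y,x\rangle$ both vanish, and, for the pairs with $p=3$, that in addition $\langle x,x,x\rangle$ and $\langle y,y,y\rangle$ vanish. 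Since $p$ is odd and $K$ is imaginary quadratic, the cup product $H^1(X,\ZZ/p\ZZ) \otimes H^1(X,\ZZ/p\ZZ) \to H^2(X,\ZZ/p\ZZ)$ vanishes identically; hence every triple Massey product in question is defined, and its indeterminacy $x\cupp H^1(X,\ZZ/p\ZZ) + H^1(X,\ZZ/p\ZZ)\cupp y$ is trivial, so these products are honest single elements of $H^2(X,\ZZ/p\ZZ)$ whose vanishing is a well-posed condition.

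First I would produce, for each $K$ in the list, the two unramified degree-$p$ extensions $L_x, L_y \supset K$ corresponding to the basis $x,y$; concretely these are the subfields of the Hilbert $p$-class field cut out by index-$p$ subgroups of $\Cl(K)/p\Cl(K)$. Next I would apply Theorem \ref{thm:main-simple} to compute $\langle x,x,y\rangle$ as a functional on $H^1(X,\mmu_p) \cong \Cl(K)[p]$. Evaluating on a basis $e_1,e_2$ of $\Cl(K)[p]$, the formula reduces the computation to the following steps: lift each $e_i = (a',J)$ to a class $(b,a,J,I) \in H^1(X,D(P_x))$ with $N_x(a) = a'$; find a fractional ideal $I' \in \Div(L_x)$ with $I = \divis(u) + i_x(J') + (1-\sigma_x)I'$ for suitable $u \in L_x^\times$ and $J' \in \Div(K)$; and then evaluate $\langle y, N_x(I')\rangle$, together with the extra term $\langle y, J\rangle$ when $p=3$. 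The symmetric computation with the roles of $x$ and $y$ interchanged gives $\langle y,y,x\rangle$. Carrying this out requires ideal arithmetic in the degree-$p$ fields $L_x,L_y$, the class-group computation in $K$, and an application of the Hasse norm theorem to produce the ideal $I'$, all of which is implemented in the PARI program.

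For the four pairs with $p=3$ I would additionally confirm that $\langle x,x,x\rangle$ and $\langle y,y,y\rangle$ vanish. By Remark \ref{rmk:vanish}, for $p=3$ these triple products agree, up to their (trivial) indeterminacy, with the Bockstein homomorphism associated to $0 \to \ZZ/3\ZZ \to \ZZ/9\ZZ \to \ZZ/3\ZZ \to 0$; their vanishing is therefore guaranteed once $9$ divides both invariant factors of $\Cl(K)$, a condition checked directly for the four fields in question (and which was imposed in selecting the search range). Once the computer confirms that $\langle x,x,y\rangle$ and $\langle y,y,x\rangle$ both vanish -- equivalently, that the Zassenhaus matrix $\ZM(G)$ is the zero matrix -- for each listed field, Theorem \ref{thm:vanish} immediately yields $l_p(K) = \infty$, proving that these quadratic imaginary fields of $p$-rank two have infinite $p$-class field tower.

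The main obstacle is the reliable execution of the explicit number-theoretic computation, specifically the production of the ideal $I'$ via the Hasse norm theorem and the evaluation of the pairing $\langle y, N_x(I')\rangle$ inside the degree-$p$ extensions. This is precisely where the arithmetic-duality formula of Theorem \ref{thm:main-simple} is indispensable, translating an otherwise opaque cohomological condition into ideal-theoretic data accessible to computation; correspondingly, the correctness of the result rests on the correctness of the PARI implementation of that formula, which can be cross-validated against the independently known finiteness results used in Theorem \ref{thm:false}.
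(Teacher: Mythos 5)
Your proposal is correct and follows essentially the same route as the paper: the paper's proof consists of running the PARI implementation of Theorem \ref{thm:main-simple} to verify that $\langle x,x,y\rangle$ and $\langle y,y,x\rangle$ vanish (with the additional $\langle x,x,x\rangle$, $\langle y,y,y\rangle$ check for $p=3$ handled via Remark \ref{rmk:vanish} and the divisibility-by-$9$ condition) and then invoking McLeman's Theorem \ref{thm:vanish}. You have simply made explicit the computational steps that the paper delegates to the cited GitHub repository.
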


\begin{proof}
This is shown by running the computer program that can be found in the above GitHub repository: the computation shows that the relevant Massey products vanish.
\end{proof}

\appendix
\section{Resolutions}\label{section:resolutions}
Let $G_x$ and $G_y$ be two groups abstractly isomorphic to $\ZZ/p\ZZ$. By choosing a generator of $G_x$ we identify $\FF_p[G_x]$, the group ring on $G_x$, with the ring $\FF_p[T_x]/(T_x^p-1)$ where $T_x$ is an indeterminate, and analogously $\FF_p[G_y]$ can be identified with $\FF_p[T_y]/(T_y^p-1)$. Obviously $\ZZ[G_x\times G_y]$ is then identified with $\FF_p[T_x,T_y]/(T_x^p-1,T_y^p-1)$. We let $I_x$ be the augmentation ideal $\FF_p[G_x]$ and $I_y$ the augmentation ideal of $\FF_p[G_y]$. We let 
$\varepsilon_x\colon \FF_p[G_x] \to \FF_p$ be the augmentation, and we will at times also use this symbol to denote the augmentation $\varepsilon_x\colon \ZZ[G_x] \to \ZZ$, and $\varepsilon_y$ will be used in the same manner, but with $G_y$ replacing $G_x$.

In this section, we will compute a resolution of the short exact sequence 
    \[
        0\to \FF_p[G_y]/I_y^2\xrightarrow{T_x-1}\FF_p[G_x\times G_y]/(I_y^2+I_x^2)\xrightarrow{\varepsilon_x}\FF_p[G_y]/I_y^2\to 0\,  
    \]
    into sufficiently free $\ZZ[G_x\times G_y]$-modules. All morphisms constructed will be morphisms of $\ZZ[G_x\times G_y]$-modules. 

\subsection*{A small non-free resolution of $\FF_p[G_y]/I_y^2$}
We have a surjection 
    $
        \begin{pmatrix}
            T_y-1 & 1
        \end{pmatrix}\colon 
        \ZZ\oplus \ZZ[G_y]\to \FF_p[G_y]/I_y^2
    $
and if $(T_y-1)a+b=0$ in $\FF_p[G_y]/I_y^2$, then $(T_y-1)a+b=(T_y-1)^2c+pd$ for some $c,d\in \ZZ[G_y]$. Thus $b=(1-T_y)(a+(1-T_y)c)+pd$ and we may write 
    \[
        \begin{pmatrix}
            a \\
            b
        \end{pmatrix}=
        \begin{pmatrix}
            0 & \varepsilon_y \\
            p & 1-T_y
        \end{pmatrix} 
        \begin{pmatrix}
            d \\
            a+(1-T_y)c
        \end{pmatrix}\,.   
    \]
Hence we have an exact sequence 
    \[
        \begin{matrix}
            \ZZ[G_y]^2
        \end{matrix}
        \xrightarrow{
            \begin{pmatrix}
                0 & \varepsilon_y \\
                p & 1-T_y
            \end{pmatrix}
        }        
        \begin{matrix}
            \ZZ \\ 
            \oplus \\
            \ZZ[G_y]
        \end{matrix}
        \xrightarrow{
            \begin{pmatrix}
                T_y-1 & 1
            \end{pmatrix}
        }
        \FF_p[G_y]/I_y^2\,.
    \]
If 
    \[
        \begin{pmatrix}
            0 & \varepsilon_y \\
            p & 1-T_y
        \end{pmatrix}
        \begin{pmatrix}
            a \\
            b
        \end{pmatrix}=
        \begin{pmatrix}
            \varepsilon_y(b)\\
            pa+(1-T_y)b
        \end{pmatrix}=
        \begin{pmatrix}
            0 \\ 
            0
        \end{pmatrix}
    \]
then $b= (1-T_y)b_1$ for some $b_1\in \ZZ[G_y]$ and $pa+(1-T_y)^2b_1=0$. But then $b_1$ lies in the kernel of $(1-T_y)^2$ modulo $p$ which is generated by the elements 
    \[
        \Delta_y=\sum_{n=0}^{p-1}T_y^n\,, \quad \Gamma_y=-\sum_{n=0}^{p-1}nT_y^n\,.
    \]
On the other hand, $\Gamma_y$ satisfies the relation    
    \[
        (1-T_y)\Gamma_y=p-\Delta_y    
    \]
and hence $-(1-T_y)\Gamma_y=\Delta_y$ modulo $p$ and hence $\ker (1-T_y)^2$ modulo $p$ is generated by $\Gamma_y$.
Hence $b_1=\Gamma_yb_2+pb_4$ and we get that $pa+p(1-T_y)b_2+p(1-T_y)^2b_4=0$. Since $p$ is integral, we conclude that 
    \[
        \begin{split}
            a & =-(1-T_y)(b_2+(1-T_y)b_4)\,,  \\
            b & = (p-\Delta_y)(b_2+(1-T_y)b_4)\,.
        \end{split}
    \]    
Hence we have a partial resolution 
    \[
        \ZZ[G_y]
        \xrightarrow{
            \begin{pmatrix}
                T_y-1 \\
                p-\Delta_y
            \end{pmatrix}
        }    
        \begin{matrix}
            \ZZ[G_y]^2
        \end{matrix}
        \xrightarrow{
            \begin{pmatrix}
                0 & \varepsilon_y \\
                p & 1-T_y
            \end{pmatrix}
        }        
        \begin{matrix}
            \ZZ \\ 
            \oplus \\
            \ZZ[G_y]
        \end{matrix}\,.
    \]

\subsection*{A non-free resolution of $\FF_p[G_x\times G_y]/(I_y^2+I_x^2)$}
We have a surjection 
    \[
        \begin{pmatrix}
            T_x-1 & T_y-1 & 1
        \end{pmatrix}\colon 
        \ZZ[G_y]\oplus \ZZ[G_x]\oplus \ZZ[G_x\times G_y]\to \FF_p[G_x\times G_y]/(I_y^2+I_x^2)
    \]
and if $(T_x-1)a+(T_y-1)b+c=0$ in $\FF_p[G_x\times G_y]/(I_y^2+I_x^2)$, then $(T_x-1)a+(T_y-1)b+c=(T_x-1)^2d+(T_y-1)^2e+pf$ for some $d,e,f\in \ZZ[G_x\times G_y]$. Hence 
    \[
        c = (1-T_x)(a+(1-T_x)d) + (1-T_y)(b+(1-T_y)e) + pf    
    \]
and we have an exact sequence 
    \[
        \begin{matrix}
            \ZZ[G_x\times G_y]^3
        \end{matrix}
        \xrightarrow{
            \begin{pmatrix}
                0 & \varepsilon_x & 0 \\
                0 & 0 & \varepsilon_y \\
                p & 1-T_x & 1-T_y
            \end{pmatrix}
        }
        \begin{matrix}
            \ZZ[G_y]\\
            \oplus \\
            \ZZ[G_x] \\ 
            \oplus \\
            \ZZ[G_x\times G_y]
        \end{matrix}    
        \xrightarrow{
            \begin{pmatrix}
                T_x-1 & T_y-1 & 1
            \end{pmatrix}
        }
        \ZZ[G_x\times G_y]/(I_y^2+I_x^2)\,.
    \]
If 
    \[
        \begin{pmatrix}
            0 & \varepsilon_x & 0 \\
            0 & 0 & \varepsilon_y \\
            p & 1-T_x & 1-T_y
        \end{pmatrix}
        \begin{pmatrix}
            a\\
            b\\
            c
        \end{pmatrix}
        =
        \begin{pmatrix}
            \varepsilon_x(b)\\
            \varepsilon_y(c)\\
            pa+(1-T_x)b+(1-T_y)c
        \end{pmatrix}
        =
        \begin{pmatrix}
            0\\
            0\\
            0
        \end{pmatrix}
    \]
then we get that $b=(1-T_x)b_1$, $c=(1-T_y)c_1$, and $pa+(1-T_x)^2b_1+(1-T_y)^2c_1=0$. Applying $\varepsilon_y \varepsilon_x$ we see that $p\varepsilon_y \varepsilon_xa=0$ and hence $\varepsilon_y \varepsilon_xa=0$. This means that 
    \[
        a=(1-T_y)a_1+(1-T_x)a_2    
    \]
for some $a_1,a_2\in \ZZ[G_x\times G_y]$ and we get 
    \begin{equation}\label{eq:a1}
        (1-T_x)(pa_2+(1-T_x)b_1)+(1-T_y)(pa_1+(1-T_y)c_1)=0\,.
    \end{equation}
We have that 
    \[
        (1-T_y)\Gamma_y=p-\Delta_y    
    \]
and hence $(1-T_y)\Gamma_y=p$ modulo $\Delta_y$.
If we apply $\varepsilon_x$ to Equation (\ref{eq:a1}) we get that $p\varepsilon_x(a_1)+(1-T_y)\varepsilon_x(c_1)=0$ modulo $\Delta_y$. Hence $\varepsilon_x(c_1)=-\Gamma_y\varepsilon_x(a_1)+\Delta_y\varepsilon(c')$ for some $c'\in \ZZ[G_x\times G_y]$ and 
    \[
        \begin{split}
            0   & = p\varepsilon_x(a_1)+(1-T_y)\varepsilon_x(c_1) \\
                & = p\varepsilon_x(a_1)-(p-\Delta_y)\varepsilon_x(a_1)+\Delta_y\varepsilon_x(c')\\
                & = \Delta_y\varepsilon_x(a_1+c')\,.
        \end{split} 
    \]
Hence $\varepsilon_x(c')=-\varepsilon_x(a_1)+(1-T_y)\varepsilon(a_3)$ for some $a_3\in \ZZ[G_x\times G_y]$ and we get 
    \[
        c_1 = -\Gamma_ya_1-\Delta_ya_1+(1-T_x)c_2\,.    
    \]
By symmetry we get 
    \[
        b_1 = -\Gamma_xa_2-\Delta_xa_2+(1-T_y)b_2    
    \]
for some $b_2\in \ZZ[G_x\times G_y]$. If we put the new expressions for $b_1$ and $c_1$ into Equation (\ref{eq:a1}) we get 
    \[
        (1-T_y)(1-T_x)((1-T_x)b_2+(1-T_y)c_2)=0\,.    
    \]
The kernel of $(1-T_y)(1-T_x)$ is generated by $\Delta_y$ and $\Delta_x$ so we get that 
    \begin{equation}\label{eq:a2}
        (1-T_x)b_2+(1-T_y)c_1=\Delta_yb_3+\Delta_x c_3     
    \end{equation}
for some $b_3,c_3\in \ZZ[G_x\times G_y]$. Applying $\varepsilon_y\varepsilon_x$ we get that 
    \[
        c_3=-b_3+(1-T_x)b_4+(1-T_y)c_4    
    \]
for some $b_4,c_4\in \ZZ[G_x\times G_y]$. Putting this into Equation (\ref{eq:a2}) we get 
    \begin{equation}\label{eq:a3}
        (1-T_x)b_2+(1-T_y)c_2=(\Delta_y-\Delta_x)b_3+\Delta_x(1-T_y) c_4\,.     
    \end{equation}
Apply $\varepsilon_y$ to get that $(1-T_x)\varepsilon_y(b_2)=p\varepsilon_y(b_3)$ modulo $\Delta_x$ and arguing as before, we get that 
    \[
        b_2=\Gamma_xb_3+\Delta_xb_5+(1-T_y)c_5    
    \]
for some $b_5,c_5\in \ZZ[G_x\times G_y]$. Similarly, we may apply $\varepsilon_x$ to get that 
    \[
        c_2=-\Gamma_yb_3+\Delta_xc_4+\Delta_yc_6+(1-T_x)b_6    
    \]
for some $b_6,c_6\in \ZZ[G_x\times G_y]$. Put these new expressions for $b_2$ and $c_2$ into Equation (\ref{eq:a3}) to get that 
    \[
        (1-T_y)(1-T_x)(b_6+c_5)=0   
    \]
and hence $b_6=-c_5+\Delta_yb_7+\Delta_xc_7$ for some $b_7,c_7\in \ZZ[G_x\times G_y]$. Hence 
    $
        c_2 = -\Gamma_y b_3+\Delta_xc_4+\Delta_yc_6-(1-T_x)c_5+\Delta_y(1-T_x)b_7   
    $
and hence  
    \[
        \begin{split}
            b_1 & = -(\Gamma_x+\Delta_x)a_2+(1-T_y)\Gamma_xb_3+(1-T_y)\Delta_xb_5+(1-T_y)^c_5\,, \\
            c_1 & = -(\Gamma_y+\Delta_y)a_1-(1-T_x)\Gamma_yb_3-(1-T_x)^2c_5\,.
        \end{split}    
    \]
Finally, we conclude that 
    \[
        \begin{split}
            a & = (1-T_y)a_1+(1-T_x)a_2\,, \\
            b & = (\Delta_x-p)(a_2-(1-T_y)b_3)+(1-T_y)^2(1-T_x)c_5\,, \\
            c & = (\Delta_y-p)(a_1-(1-T_x)b_3)-(1-T_y)(1-T_x)^2c_5,, 
        \end{split}    
    \] 
which implies that we have a partial resolution 
    \[\resizebox{.9\hsize}{!}{$
        \ZZ[G_x\times G_y]^3
        \xrightarrow{
            \begin{pmatrix}
                1-T_y & 1-T_x & 0  \\
                0 & \Delta_x-p & (1-T_y)^2(1-T_x) \\
                \Delta_y-p & 0 & (1-T_y)(1-T_x)^2
            \end{pmatrix}
        } 
        \ZZ[G_x\times G_y]^3
        \xrightarrow{
            \begin{pmatrix}
                0 & \varepsilon_x & 0 \\
                0 & 0 & \varepsilon_y \\
                p & 1-T_x & 1-T_y
            \end{pmatrix}
        }
        \begin{matrix}
            \ZZ[G_y]\\
            \oplus \\
            \ZZ[G_x] \\ 
            \oplus \\
            \ZZ[G_x\times G_y]
        \end{matrix}\,.
    $}
    \]

\subsection*{A free resolution of $\FF_p[G_y]/I_y^2$}

We have a surjection 
    \[
        \varepsilon_x
        \colon 
        \ZZ[G_x\times G_y]\to \FF_p[G_y]/(I_y^2)
    \]
and if $\varepsilon_x(a)=0$ in $\FF_p[G_y]/I_y^2$, then $a=(1-T_y)^2b+pc+(1-T_x)d$ for some $b,c,d\in \ZZ[G_x\times G_y]$. Hence we have an exact sequence 
    \[
        \ZZ[G_x\times G_y]^3
        \xrightarrow{
            \begin{pmatrix}
                (1-T_y)^2 & p & 1-T_x
            \end{pmatrix}
        }    
        \ZZ[G_x\times G_y]
        \xrightarrow{
            \varepsilon_x
        }
        \FF_p[G_y]/(I_y^2)\,.
    \]
If $(1-T_y)^2b + pc + (1-T_x)d=0$ then $c=(1-T_y)c_1+(1-T_x)c_2$ for some $c_1,c_2\in \ZZ[G_x\times G_y]$ and we get 
    \[
        (1-T_y)((1-T_y)b+pc_1)+(1-T_x)(d+pc_2)=0\,.    
    \]
Applying $\varepsilon_y$ we see that $d=-pc_2+\Delta_xd_1+(1-T_y)d_2$ for some $d_1,d_2\in \ZZ[G_x\times G_y]$ and applying $\varepsilon_x$ we see that $b=-\Gamma_yc_1+\Delta_yb_1+(1-T_x)b_2$ for some $b_1,b_2\in \ZZ[G_x\times G_y]$. Inserting $b$ and $d$ we get 
    \[
        (1-T_y)(1-T_x)((1-T_y)b_2+d_2)=0    
    \]
and we may write 
    \[
        d_2=-(1-T_y)b_2+\Delta_yb_3+\Delta_xb4    
    \]
for some $b_3,b_4\in \ZZ[G_x\times G_y]$. Hence we get 
    \[
        \begin{split}
            b & = -\Gamma_yc_1+\Delta_yb_1+(1-T_x)b_2\,, \\
            c & = (1-T_y)c_1+(1-T_x)c_2\,, \\
            d & = -pc_2+\Delta_xd_1-(1-T_y)^2b_2+\Delta_x(1-T_y)b_3\,,
        \end{split}    
    \]
and we have a partial resolution 
    \[
        \adjustbox{scale=0.8}{
            $\ZZ[G_x\times G_y]^5
            \xrightarrow{
                \begin{pmatrix}
                    0 & -\Gamma_y & \Delta_y & 1-T_x & 0 \\
                    1-T_x & 1-T_y & 0 & 0 & 0 \\
                    -p & 0 & 0 & -(1-T_y)^2 & \Delta_x
                \end{pmatrix}
            }   
            \ZZ[G_x\times G_y]^3
            \xrightarrow{
                \begin{pmatrix}
                    (1-T_y)^2 & p & 1-T_x
                \end{pmatrix}
            }    
            \ZZ[G_x\times G_y]$\,.   
        } 
    \]

\subsection*{Resolution of the short exact sequence}

We may now put our three resolutions together to form a resolution of the short exact sequence 
\[
    0\to \FF_p[G_y]/I_y^2\xrightarrow{T_x-1}\FF_p[G_x\times G_y]/(I_y^2+I_x^2)\xrightarrow{\varepsilon_x}\FF_p[G_y]/I_y^2\to 0\,.    
\]
This looks as follows:

\begin{equation}\label{eq:full-resolution}
    \begin{tikzcd}
        \ZZ[G_x\times G_y]^5\ar{r}\ar{d}{\delta^{-1,-2}} & \ZZ[G_x\times G_y]^3\ar{r}\ar{d}{\delta^{0,-2}} & \ZZ[G_y]\ar{d}{\delta^{1,-2}} \\
        \ZZ[G_x\times G_y]^3 \ar{r}{\alpha_{-1}}\ar{d}{\delta^{-1,-1}} & \ZZ[G_x\times G_y]^3\ar{r}{\beta_{-1}}\ar{d}{\delta^{0,-1}} & \ZZ[G_y]^2\ar{d}{\delta^{1,-1}} \\
        \ZZ[G_x\times G_y]\ar{r}{\alpha_{0}}\ar{d}{\delta^{-1,0}} & 
        \begin{matrix}
            \ZZ[G_y] \\
            \oplus \\
            \ZZ[G_x] \\
            \oplus \\
            \ZZ[G_x\times G_y]
        \end{matrix}
        \ar{r}{\beta_0}\ar{d}{\delta^{0,0}} & 
        \begin{matrix}
            \ZZ\\
            \oplus \\
            \ZZ[G_y]
        \end{matrix}
        \ar{d}{\delta^{1,0}} \\
        \FF_p[G_y]/I_y^2\ar{r}{T_x-1} & \FF_p[G_x\times G_y]/(I_y^2+I_x^2)\ar{r}{\varepsilon_x} & \FF_p[G_y]/I_y^2
    \end{tikzcd}  
\end{equation}
where 
    \[
        \adjustbox{scale=0.75}{
            $\delta^{-1,-2}=\begin{pmatrix}
                0 & -\Gamma_y & \Delta_y & 1-T_x & 0 \\
                1-T_x & 1-T_y & 0 & 0 & 0 \\
                -p & 0 & 0 & -(1-T_y)^2 & \Delta_x
            \end{pmatrix}\,, \quad 
            \delta^{0,-2}=\begin{pmatrix}
                1-T_y & 1-T_x & 0  \\
                0 & \Delta_x-p & (1-T_y)^2(1-T_x) \\
                \Delta_y-p & 0 & -(1-T_y)(1-T_x)^2
            \end{pmatrix}\,, \quad 
            \delta^{1,-2}=\begin{pmatrix}
                T_y-1\\
                p-\Delta_y
            \end{pmatrix}\,,$
        }    
    \]
    \[
        \adjustbox{scale=0.75}{
            $\alpha_{-1}=\begin{pmatrix}
                0&T_x-1&0\\
                (1-T_y)^2&p&0\\
                (T_y-1)(1-T_x)&0&0
            \end{pmatrix}\,,
            \quad 
            \beta_{-1}=\begin{pmatrix}
                \varepsilon_x&0&0\\
                0&0&\varepsilon_x
            \end{pmatrix}\,,$
        }
    \]
    \[
        \adjustbox{scale=0.75}{
            $\delta^{-1,-1}=\begin{pmatrix}
                (1-T_y)^2 & p & 1-T_x
            \end{pmatrix}\,, \quad 
            \delta^{0,-1}=\begin{pmatrix}
                0 & \varepsilon_x & 0 \\
                0 & 0 & \varepsilon_y \\
                p & 1-T_x & 1-T_y
            \end{pmatrix}\,, \quad 
            \delta^{1,-1}=\begin{pmatrix}
                0&\varepsilon_y\\
                p&1-T_y
            \end{pmatrix}\,,$
        }    
    \]
    \[
        \adjustbox{scale=0.75}{
            $\alpha_0=\begin{pmatrix}
                \varepsilon_x\\
                0\\
                0
            \end{pmatrix}\,, 
            \quad
            \beta_0=\begin{pmatrix}
                0&\varepsilon_x&0\\
                0&0&\varepsilon_x
            \end{pmatrix}\,.$ 
        }  
    \]
    \[
        \adjustbox{scale=0.75}{
            $\delta^{-1,0}=\varepsilon_x\,, \quad 
            \delta^{0,0}=\begin{pmatrix}
                T_x-1&T_y-1&1
            \end{pmatrix}\,, \quad 
            \delta^{1,0}=\begin{pmatrix}
                T_y-1&1
            \end{pmatrix}\,.$  
        }
    \]
Note also that we have a quasi-isomorphism 
    \begin{equation}\label{eq:quasi-iso}
        \begin{tikzcd}[column sep=1.2in, row sep=0.6in, ampersand replacement=\&]
            \ZZ[G_x\times G_y]^5\ar{r}{
                \begin{pmatrix}
                    0 & -\varepsilon_x & 0 & 0 & 0
                \end{pmatrix}
            }\ar{d}[swap]{
                \begin{pmatrix}
                    0 & -\Gamma_y & \Delta_y & 1-T_x & 0 \\
                    1-T_x & 1-T_y & 0 & 0 & 0 \\
                    -p & 0 & 0 & -(1-T_y)^2 & \Delta_x
                \end{pmatrix}
            } \& \ZZ[G_y]\ar{d}{
                \begin{pmatrix}
                    T_y-1 \\
                    p-\Delta_y
                \end{pmatrix}
            } \\
            \ZZ[G_x\times G_y]^3 \ar{r}{
                \begin{pmatrix}
                    0 & \varepsilon_x & 0 \\
                    (1-T_y)\varepsilon_x & 0 & 0
                \end{pmatrix}
            }\ar{d}[swap]{
                \begin{pmatrix}
                    (1-T_y)^2 & p & 1-T_x
                \end{pmatrix}
            } \& \ZZ[G_y]^2\ar{d}{
                \begin{pmatrix}
                    0 & \varepsilon_y \\
                    p & 1-T_y
                \end{pmatrix}
            } \\
            \ZZ[G_x\times G_y]\ar{r}{
                \begin{pampmatrix}
                    0 \\ \varepsilon_x
                \end{pampmatrix}
            } \& 
            \begin{matrix}
                \ZZ \\
                \oplus \\
                \ZZ[G_y]
            \end{matrix}
        \end{tikzcd}
    \end{equation}

\subsection*{Free resolution}
As an alternative, we may resolve the sequence by free resolutions:

\begin{equation}\label{eq:full-free-resolution}
    \begin{tikzcd}
        \ZZ[G_x\times G_y]^5\ar{r}\ar{d}{\delta^{-1,-2}} & \ZZ[G_x\times G_y]^5\ar{r}\ar{d}{\delta^{0,-2}} & \ZZ[G_x\times G_y]^5\ar{d}{\delta^{1,-2}} \\
        \ZZ[G_x\times G_y]^3 \ar{r}{\alpha_{-1}}\ar{d}{\delta^{-1,-1}} & \ZZ[G_x\times G_y]^3\ar{r}{\beta_{-1}}\ar{d}{\delta^{0,-1}} & \ZZ[G_x\times G_y]^3\ar{d}{\delta^{1,-1}} \\
        \ZZ[G_x\times G_y]\ar{r}{\alpha_{0}}\ar{d}{\delta^{-1,0}} & 
        \ZZ[G_x\times G_y]
        \ar{r}{\beta_0}\ar{d}{\delta^{0,0}} & 
        \ZZ[G_x\times G_y]
        \ar{d}{\delta^{1,0}} \\
        \FF_p[G_y]/I_y^2\ar{r}{T_x-1} & \FF_p[G_x\times G_y]/(I_y^2+I_x^2)\ar{r}{\varepsilon_x} & \FF_p[G_y]/I_y^2
    \end{tikzcd}  
\end{equation}
where 
    \[
        \scalemath{0.75}{
            \delta^{1,-2}=\delta^{-1,-2}=\begin{pmatrix}
                1-T_y & 1-T_x & 0 & 0 & 0 \\
                0 & -p & 0 & -(1-T_y)^2 & \Delta_x \\
                -\Gamma_y & 0 & \Delta_y & 1-T_x & 0
            \end{pmatrix}\,, \quad 
            \delta^{0,-2}=\begin{pmatrix}
                1-T_y & 1-T_x & 0 & 0 & 0 \\
                0 & -\Gamma_x & \Delta_x & 0 & (1-T_y)^2 \\
                -\Gamma_y & 0 & 0 & \Delta_y & -(1-T_x)^2
            \end{pmatrix}\,, 
        }    
    \]
    \[
        \scalemath{0.75}{
            \alpha_{-1}=\begin{pmatrix}
                T_x-1&0&0\\
                0&-1&0\\
                0&0&T_x-1
            \end{pmatrix}\,,
            \quad 
            \beta_{-1}=\begin{pmatrix}
                1&0&0\\
                0&1-T_x&0\\
                0&0&1
            \end{pmatrix}\,,
        }
    \]
    \[
        \scalemath{0.75}{
            \delta^{1,-1}=\delta^{-1,-1}=\begin{pmatrix}
                p & 1-T_x & (T_y-1)^2
            \end{pmatrix}\,, \quad 
            \delta^{0,-1}=\begin{pmatrix}
                p & (T_x-1)^2 & (T_y-1)^2
            \end{pmatrix}\,, 
        }    
    \]
    \[
        \scalemath{0.75}{
            \alpha_0=T_x-1\,, 
            \quad
            \beta_0=1\,.
        }  
    \]
    \[
        \scalemath{0.75}{
            \delta^{1,0}=\delta^{-1,0}=\varepsilon_x\,, \quad 
            \delta^{0,0}=1\,.  
        }
    \]

We need to show that the middle column is indeed exact. We have a surjection 
\[
    \ZZ[G_x\times G_y]\to \FF_p[G_x\times G_y]/(I_y^2+I_x^2)
\]
and if $a=0$ in $\FF_p[G_y]/I_y^2$, then $a=(1-T_y)^2b+(1-T_x)^2c+pd$ for some $b,c,d\in \ZZ[G_x\times G_y]$. Hence we have an exact sequence 
\[
    \ZZ[G_x\times G_y]^3
    \xrightarrow{
        \begin{pmatrix}
            p & (1-T_x)^2 & (1-T_y)^2
        \end{pmatrix}
    }    
    \ZZ[G_x\times G_y]
    \to
    \FF_p[G_x\times G_y]/(I_y^2+I_x^2)\,.
\]
Now suppose that $pa+(1-T_x)^2b+(1-T_y)^2c=0$ for some $a,b,c\in \ZZ[G_x\times G_y]$. Then $a=(1-T_y)a_1+(1-T_x)a_2$ for some $a_1,a_2\in \ZZ[G_x\times G_y]$ and we get 
    \begin{equation}\label{eq:a5}
        (1-T_x)(pa_2+(1-T_x)b)+(1-T_y)(pa_1+(1-T_y)c)=0\,.    
    \end{equation}
Applying $\varepsilon_x$ we get $p\varepsilon_x(a_1)+(1-T_y)\varepsilon_x(c)=0$ modulo $\Delta_y$ and hence 
    \[
        c=-\Gamma_y a_1+\Delta_yc_1+(1-T_x)c_2
    \]
for some $c_1,c_2\in \ZZ[G_x\times G_y]$. By symmetry
    \[
        b=-\Gamma_x a_2+\Delta_xb_1+(1-T_y)b_2
    \]
for some $b_1,b_2\in \ZZ[G_x\times G_y]$. Putting this into Equation (\ref{eq:a5}) we get 
    \[
        (1-T_y)(1-T_x)((1-T_x)b_2+(1-T_y)c_2)=0
    \]
and hence $(1-T_x)b_2+(1-T_y)c_2=\Delta_yb_3+\Delta_xc_3$ for some $b_3,c_3\in \ZZ[G_x\times G_y]$. Apply $\varepsilon_y\varepsilon_x$ to see that $c_3=-b_3+(1-T_x)b_4+(1-T_y)c_4$ for some $b_4,c_4\in \ZZ[G_x\times G_y]$. Hence we have 
    \begin{equation}\label{eq:a6}
        (1-T_x)b_2+(1-T_y)c_2=(\Delta_y-\Delta_x)b_3+(1-T_y)\Delta_xc_4\,.    
    \end{equation}
Apply $\varepsilon_y$ and $\varepsilon_x$ respectively to see that 
    \[
        \begin{split}
            b_2 & = \Gamma_xb_3+\Delta_xb_5+(1-T_y)c_5\,, \\
            c_2 & = -\Gamma_yb_3+\Delta_xc_4+\Delta_yc_6+(1-T_x)b_6
        \end{split}    
    \]
for some $b_5,b_6,c_5,c_6\in \ZZ[G_x\times G_y]$. Put this into Equation (\ref{eq:a6}) and we get $(1-T_y)(1-T_x)(b_6+c_5)=0$. Hence 
    \[
        b_6=-c_5+\Delta_yb_7+\Delta_xc_7
    \]
for some $b_7,c_7\in \ZZ[G_x\times G_y]$. Putting it all together we get 
    \[
        \begin{split}
            a & = (1-T_y)a_1+(1-T_x)a_2\,, \\
            b & = -\Gamma_x(a_2-(1-T_y)b_3)+\Delta_x(b_1+(1-T_y)b_5)+(1-T_y)^2c_5\,, \\
            c & = -\Gamma_y(a_1+(1-T_x)b_3)+\Delta_y(c_1+(1-T_x)c_6+(1-T_x)^2b_7)-(1-T_x)^2c_5\,.
        \end{split}    
    \]
After removing unnecessary columns we get a partial resolution 
    \[
        \scalemath{0.8}{
            \ZZ[G_x\times G_y]^5
            \xrightarrow{
                \begin{pmatrix}
                    1-T_y & 1-T_x & 0 & 0 & 0 \\
                    0 & -\Gamma_x & \Delta_x & 0 & (1-T_y)^2 \\
                    -\Gamma_y & 0 & 0 & \Delta_y & -(1-T_x)^2
                \end{pmatrix}
            }   
            \ZZ[G_x\times G_y]^3
            \xrightarrow{
                \begin{pmatrix}
                    p & (1-T_x)^2 & (1-T_y)^2
                \end{pmatrix}
            }    
            \ZZ[G_x\times G_y]\,.   
        } 
    \]
\section{Cohomology and duality for locally constant sheaves}\label{sec:spectral}
Let $X$ be the spectrum of the ring of integers of a number field $K$ and let $A$ be a locally constant constructible abelian sheaf on $X_{\et}$. In this section, we will show how one may compute the cohomology groups $H^i(X, D(A))$ in degrees 0,1, and 2, where $D(A)=R\HOM(A, \GG_m)$. By \cite[II.3.9]{MilneEtale}, we have an exact sequence 
    \[
        0\to \GG_{m,X} \to j_*\GG_{m,K}\to \DIV_X \to 0    
    \]
where $j\colon \Spec K\to X$ is the generic point. We denote the complex $j_*\GG_{m,K}\to \DIV_X$, concentrated in degree 0 and 1, by $\cC$. Choose a bounded resolution $\cE\to A$ by locally free, locally constant sheaves. By a locally free sheaf $E$ we mean that $E$ is, after pullback along a finite \'etale cover, a finite direct sum of copies of $\ZZ$. Since $\cE$ is a complex of locally free sheaves, we have that $\HOM(\cE, -)$ is quasi-isomorphic to $R\HOM(\cE, -)$. Hence $\HOM(\cE, \cC)$ is isomorphic to $D(\cE)$ in the derived category, and the Grothendieck spectral sequence for $R\Gamma\circ R\HOM(\cE, \cC)$ takes the form
    \[
        H^i(X, H^j(\HOM(\cE, \cC))) \Rightarrow H^{i+j}(R\Gamma(X, \HOM(\cE, \cC)))\cong H^{i+j}(X, D(A))\,.
    \]

\begin{proposition}\label{prop:edge}
    Let $X$ be the spectrum of the ring of integers of a number field and let $A$ be a locally constant constructible sheaf on $X$. The canonical map 
        \[
            H^i(\Gamma(X, \HOM(\cE, \cC)))\to H^i(X, D(A))
        \] 
    is an isomorphism for $i=0,1$, and $2$.  
\end{proposition}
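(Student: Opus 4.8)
The plan is to exhibit the map of the proposition as an edge homomorphism of a hypercohomology spectral sequence and then to control the off-edge terms in low total degree. Filtering the complex $\HOM(\cE,\cC)$ by its columns (rather than by its cohomology sheaves) gives the spectral sequence
\[
E_1^{p,q} = H^q\big(X,\HOM(\cE,\cC)^p\big)\ \Longrightarrow\ \mathbb H^{p+q}\big(X,\HOM(\cE,\cC)\big)\cong H^{p+q}(X,D(F)),
\]
whose bottom row $E_2^{p,0}=H^p\big(\Gamma(X,\HOM(\cE,\cC))\big)$ is the naive cohomology and whose edge map $E_2^{p,0}\to H^p(X,D(F))$ is exactly the canonical map in question. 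Since $\HOM(\cE,\cC)^p=0$ for $p<0$, the bottom row has no outgoing differential, so it suffices to prove that every $E_1^{p,q}$ with $q\ge 1$ and $p+q\le 2$ either vanishes or is removed by a differential touching the bottom row. Writing $\HOM(\cE,\cC)^0=\HOM(\cE^0,j_*\GG_m)$ and $\HOM(\cE,\cC)^1=\HOM(\cE^{-1},j_*\GG_m)\oplus\HOM(\cE^0,\DIV)$, the relevant groups are $H^1$ and $H^2$ of $\HOM(\cL,j_*\GG_m)$ and $H^1$ of $\HOM(\cL,\DIV)$, for $\cL$ among the locally free sheaves $\cE^{k}$; note in particular that the divisor sheaf only occurs in positive columns, so it meets total degree $\le 2$ solely through the single group $H^1(X,\HOM(\cE^0,\DIV))$.

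Next I would record two sheaf identifications, both valid because each $\cE^{k}$ is locally free and hence $\HOM(\cE^{k},-)$ is exact. For the divisor sheaf $\DIV_X=\bigoplus_x (i_x)_*\ZZ$ one obtains $\HOM(\cL,\DIV)\cong\bigoplus_x (i_x)_*\big(\cL^\vee_{\bar x}\big)$, whence $H^q(X,\HOM(\cL,\DIV))=\bigoplus_x H^q(\kappa(x),\cL^\vee_{\bar x})$; here each residue field is finite with $\Gal(\kappa(x))\cong\hat{\ZZ}$, and the $H^1$ that we need is finite but in general nonzero. For the generic part, the projection formula gives $\HOM(\cL,j_*\GG_m)\cong j_*\HOM(j^*\cL,\GG_{m,K})=j_*T$ with $T$ a torus over $K$, and applying $\HOM(\cL,-)$ to the divisor sequence yields a short exact sequence of sheaves $0\to D(\cL)\to\HOM(\cL,j_*\GG_m)\to\HOM(\cL,\DIV)\to 0$ in which $D(\cL)$ is a torus over $X$. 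I would then compute the low-degree cohomology of $\HOM(\cL,j_*\GG_m)$ from the long exact sequence of this divisor sequence, using Hilbert 90, the identification of $H^1(X,D(\cL))$ with a class-group-type invariant (via Shapiro for the induced summands), and the vanishing $H^1(X,\DIV)=0$, which holds because $\DIV$ carries the trivial Galois action on $\ZZ$ and $\Frob$ then acts trivially so that $H^1(\kappa(x),\ZZ)=0$.

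Assembling these inputs, the off-row $j_*\GG_m$-terms in total degree $\le 2$, namely $E_1^{0,1}$, $E_1^{0,2}$ and the $\cE^{-1}$-summand of $E_1^{1,1}$, should be identified and controlled through the long exact sequence above together with the functoriality of Artin--Verdier duality recorded earlier. The genuine obstacle is the surviving term $H^1(X,\HOM(\cE^0,\DIV))=\bigoplus_x H^1(\kappa(x),(\cE^0)^\vee_{\bar x})$, which is nonzero in general (for instance for a free module on which $\Frob$ acts nontrivially at an inert prime), so one cannot argue by naive term-by-term vanishing. It must instead be annihilated inside the spectral sequence, either as the image of $d_1\colon E_1^{0,1}\to E_1^{1,1}$ whose pertinent component is the map induced by the divisor morphism $j_*\GG_m\to\DIV$, or through $d_2$ into the bottom row; and one must simultaneously verify that $d_2\colon E_2^{0,1}\to E_2^{2,0}$ is zero, so that the naive $H^2$ is not altered. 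Pinning down this cancellation precisely, by tracing the connecting maps of the divisor sequence against the duality statements, is the step I expect to be the most delicate.
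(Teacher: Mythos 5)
Your setup is the same as the paper's: the column-filtration spectral sequence $E_1^{p,q}=H^q(X,\HOM(\cE,\cC)^p)\Rightarrow H^{p+q}(X,D(F))$ with the canonical map as the edge homomorphism onto the bottom row. But the argument is not completed, and the point at which you stall is exactly the point where the paper uses a structural fact about $\cE$ that you have dropped. The resolution $\cE$ is not an arbitrary complex of locally free locally constant sheaves: each term is a finite direct sum of sheaves $\pi_*\pi^*\ZZ$ for finite \'etale covers $\pi\colon Y\to X$ (this is what the explicit resolutions of Appendix A produce, being built from permutation modules $\ZZ[G/H]$). Consequently every $E_1^{p,q}$ is a direct sum of groups $H^q(Y,j_*\GG_{m,L})$ and $H^q(Y,\DIV_Y)$, and by Mazur's computations \emph{both} of these vanish for $q=1$. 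So the entire row $q=1$ of the $E_1$-page is zero --- there is no ``surviving term'' $H^1(X,\HOM(\cE^0,\DIV))$ to annihilate, and no delicate cancellation to perform. Your counterexample (a free module with nontrivial Frobenius action at an inert prime) is a correct observation about general locally free sheaves, but it does not occur for the induced sheaves actually used; for $\pi_*\pi^*\ZZ$ the relevant group is $\bigoplus_y H^1(\kappa(y),\ZZ)=0$.

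The fallback you propose for handling a nonzero $q=1$ row is, moreover, internally inconsistent, as you half-notice yourself: if $E_2^{0,1}\neq 0$ it must either survive to $E_\infty^{0,1}$ (breaking the isomorphism in degree $1$) or be killed by a nonzero $d_2\colon E_2^{0,1}\to E_2^{2,0}$ (breaking the isomorphism in degree $2$); there is no third option. Likewise a nonzero $E_2^{1,1}$ could only be removed by an injective $d_2$ into $E_2^{3,0}$, which you give no reason to expect. Finally, you never actually deal with the one off-edge term that genuinely requires an argument, namely $E^{0,2}$: the paper identifies the $q=2$ row with the complex $\Br(L)\to\Br(L)\oplus\bigoplus_{p}\QQ/\ZZ\to\bigoplus_{p}\QQ/\ZZ$ and uses injectivity of the local invariant map $\inv\colon\Br(L)\to\bigoplus_p\QQ/\ZZ$ to conclude $E_2^{0,2}=0$, after which degree-counting ($d_r$ into the bottom row at column $p$ originates at column $p-r$ in row $r-1$, impossible for $p\le 2$) finishes the proof. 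Saying these terms ``should be identified and controlled'' via Artin--Verdier duality is not a proof, and duality is not what does the work here --- the Hasse principle for Brauer groups is.
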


\begin{proof}
    Without loss of generality, we may assume that every sheaf appearing in $\cE$ is a direct sum of sheaves of the form $\pi_*\pi^*\ZZ$ for some finite \'etale cover $\pi\colon Y\to X$, corresponding to a field extension $K\subseteq L$. Hence the $E_1$-page of the spectral sequence above will consist of groups that are direct sums of groups of the form $H^i(Y,j_*\GG_{m,L})$ or $H^i(Y, \DIV_Y)$. These are computed in \cite{MazurNotes} to be 
        \[
            H^i(Y, j_*\GG_{m,L}) = 
            \begin{cases}
                L^\times & i=0\,, \\
                0  & i = 1\,, \\
                \Br(L) & i = 2\,, \\
                0  & i \geq 3\,,
            \end{cases}   \quad
            H^i(Y, \DIV_Y) = 
            \begin{cases}
                \Div(Y) & i=0\,, \\
                0 & i = 1\,, \\
                \bigoplus_{p\in X}\QQ/\ZZ & i = 2\,, \\ 
                0  & i \geq 3\,
              
            \end{cases} 
        \]
    where $p$ ranges over all closed points of $Y$.
    This shows that the $E_1$-page is of the form 
        \[
            \begin{sseq}{0...3}{0...3}
                \ssdropbull
                \ssarrow{1}{0}
                \ssdropbull
                \ssarrow{1}{0}
                \ssdropbull
                \ssarrow{1}{0}
                \ssdropbull
                \ssmoveto{0}{2}
                \ssdropbull
                \ssarrow{1}{0}
                \ssdropbull
                \ssarrow{1}{0}
                \ssdropbull
                \ssarrow{1}{0}
                \ssdropbull
            \end{sseq}
        \]
    Without loss of generality, suppose that $\cE_0\cong \pi_*\pi^*\ZZ$ for a single finite \'etale cover $\pi\colon Y\to X$. Then the first horizontal map of the $i$th row will take the form 
        \[
            \begin{tikzcd}
                H^i(X, \HOM(\cE_0, j_*\GG_{m,K}))\ar{r}\ar[equal]{d} & H^i(X, \HOM(\cE_1, j_*\GG_{m,K})\oplus \HOM(\cE_0, \DIV_X))\ar[equal]{d} \\ 
                H^i(Y, j_*\GG_{m,L})\ar{r} & H^i(X, \HOM(\cE_1, j_*\GG_{m,K}))\oplus H^i(Y, \DIV_Y)
            \end{tikzcd}\,.
        \]
    In vertical degree $i=2$, the map $H^2(Y, j_*\GG_{m,L})\to H^2(Y, \DIV_Y)$ can be identified with the invariant map $\Br(L)\to \bigoplus_{p\in Y}\QQ/\ZZ$, which is known to be injective.    
    Hence the $E_2$-page takes the form 
        \[
            \begin{sseq}{0...3}{0...3}
                \ssdropbull
                \ssarrow{1}{0}
                \ssdropbull
                \ssarrow{1}{0}
                \ssdropbull
                \ssarrow{1}{0}
                \ssdropbull
                \ssmoveto{1}{2}
                \ssdropbull
                \ssarrow{1}{0}
                \ssdropbull
                \ssarrow{1}{0}
                \ssdropbull
            \end{sseq}
        \]
    We then immediately see that nothing except the terms on the bottom row can contribute to the target of the spectral sequence in degrees 0,1, and 2. This shows that we have an isomorphism $H^i(\Gamma(X, \HOM(\cE, \cC)))\to H^i(X, D(A))$ in degrees 0,1, and 2 as claimed. 
\end{proof}
\bibliographystyle{dary}
\bibliography{bibliography}
\end{document}